\documentclass[10pt]{amsart}
\usepackage{a4wide}
\usepackage[utf8x]{inputenc}
\usepackage[colorlinks=true, pdfstartview=FitV, linkcolor=purple, citecolor=purple]{hyperref}
\usepackage{tikz,tikz-cd}
\usepackage{amsmath}
\usepackage{amsfonts}
\usepackage{amsthm}
\usepackage{amssymb}
\usepackage{lineno}
\usepackage{cleveref}
\usepackage{bbm}
\usepackage{tikz}
\usetikzlibrary{shapes, positioning, arrows.meta, calc,cd}
\tikzcdset{every label/.append style = {font = \normalsize}}
\usepackage{bbold}

\usepackage{tabularx}
\usepackage{float}
\usepackage{import}
\usepackage{enumitem}
\usepackage[toc,page]{appendix}
\usepackage{listings}
\usepackage{xcolor} 
\usepackage{comment}

\usepackage{todonotes}

\newtheorem{theorem}{Theorem}[section]

\newtheorem{lemma}[theorem]{Lemma}
\newtheorem{proposition}[theorem]{Proposition}

\theoremstyle{remark}
\newtheorem{definition}[theorem]{Definition}
\newtheorem{remark}[theorem]{Remark}

\numberwithin{equation}{section}

\newcommand{\cA}{\mathcal{A}}

\newcommand{\N}{\mathbb{N}}

\newcommand{\ba}{\mathbf{a}}

\newcommand{\bu}{\mathbf{u}}
\newcommand{\bv}{\mathbf{v}}
\newcommand{\bw}{\mathbf{w}}
\newcommand{\bx}{\mathbf{x}}
\newcommand{\by}{\mathbf{y}}
\newcommand{\bz}{\mathbf{z}}

\newcommand{\bone}{\mathbf{1}}

\newcommand{\btau}{\boldsymbol{\tau}}
\newcommand{\cL}{\mathcal{L}} 
\numberwithin{equation}{section}

\subjclass[2020]{11J70, 11K50, 40A15}

\title{Convergence and combinatorics of the Reverse algorithm}

\author{Hiroaki Ito}
\author{Niels Langeveld}
\author{J\"org Thuswaldner}
\dedicatory{Dedicated to Wolfgang Steiner on the occasion of his 50$^{\,th}$ birthday}
\address{Chair of Mathematics, Statistics, and Geometry, Montanuniversit\"at Leoben, Franz-Josef-Strasse~18, A-8700 Leoben, Austria}
\date{\today}
\thanks{
This work was supported by the bilateral grant SYMDYNAR (ANR-23-CE40-0024 and FWF~I~6750) of the Agence Nationale de la Recherche and the Austrian Science Fund.}

\begin{document}
\begin{abstract}
We study the Reverse algorithm, a multidimensional continued fraction algorithm, which is not unimodular. We show that the Reverse algorithm is ergodic and, by proving that its second Lyapunov exponent is negative, that it is a.e.\ exponentially convergent. In addition to that, we attach substitutions to this algorithm and study the $S$-adic languages generated by sequences of these substitutions. The negativity of the second Lyapunov exponent implies that almost all of these languages are balanced. By a thorough study of the combinatorics of the substitutions, we are even able to obtain a concrete generic family of balanced languages that is characterized in terms of a simple condition on the underlying sequence of substitutions.
\end{abstract}
\maketitle
\section{Introduction}
In 1991, Arnoux and Rauzy \cite{Arnoux-Rauzy:91} invented a new 2-dimensional continued fraction algorithm that is defined on a subset of the positive cone $\Lambda=\{[x_0:x_1:x_2]\in\mathbb{P}^2\,:\, x_0, x_1, x_2>0\}$ of the projective space $\mathbb{P}^2$. Given an element $[x_0:x_1:x_2]\in\Lambda$, the continued fraction map $F_{\mathrm{AR}}$ defining this algorithm subtracts the two smaller coordinates from the largest coordinate of its argument. For example, if $[x_0:x_1:x_2]\in\Lambda$ with $x_0>x_1$ and $x_0 > x_2$ is given, then $F_{\mathrm{AR}}([x_0:x_1:x_2])=[x_0-x_1-x_2:x_1:x_2]$. 
It is easy to see that this procedure does not always result in a vector that is contained in~$\Lambda$. Therefore, in general, one cannot iterate the map $F_{\mathrm{AR}}$ on~$\Lambda$. To make sure that all iterations of $F_{\mathrm{AR}}$ remain in~$\Lambda$, one needs to consider the restriction $F_{\mathrm{AR}}:\mathcal{R}\to\mathcal{R}$, where $\mathcal{R}$ is the well-known {\em Rauzy gasket}, a ``fractal'' set of measure~$0$, which has been extensively studied in the literature (see {\em e.g.}~\cite{Arnoux_2013,AHS:16,PS:24}). It is known that the {\em Arnoux-Rauzy algorithm} $(\mathcal{R}, F_{\mathrm{AR}})$ is exponentially convergent in the sense that it has a second Lyapunov exponent which is negative~\cite{AD:19}. We can attach {\em Arnoux-Rauzy substitutions} with the Arnoux-Rauzy algorithm. Negativity of the second Lyapunov exponent entails that these substitutions have nice properties. For example, in \cite{Berthe-Cassaigne-Steiner, Delecroix-Hejda-Steiner} strong balancedness properties of $S$-adic languages, that are defined in terms of sequences of Arnoux-Rauzy substitutions, are established. More recently, in \cite{BST:19}, it was shown that $S$-adic dynamical systems of Arnoux-Rauzy substitutions are often measurably conjugate to translations on the $2$-dimensional torus.

The problem with the domain of definition of $F_{\mathrm{AR}}$ comes from the fact that  elements contained in the set 
$
\Lambda_4=\{[x_0:x_1:x_2]\in\Lambda : 2x_i \le x_0 + x_1 + x_2 \text{ for all } i\in \{0,1,2\} \}
$
are mapped outside of $\Lambda$ by $F_{\mathrm{AR}}$. We can plug this hole by changing the definition of the map $F_{\mathrm{AR}}$ on $\Lambda_4$. This leads to the {\em Reverse algorithm}, a $2$-dimensional continued fraction algorithm that was introduced in \cite{AL18}.

\begin{definition}[Reverse algorithm; see {\cite[Section~4]{AL18}}]
\label{def:Reverse}
Let $\Lambda$ be the positive cone in $\mathbb{P}^2$, which is partitioned by the subcones 
\begin{align*}
\Lambda_1&=\{[x_0:x_1:x_2]\in\Lambda \,:\, 2x_0>x_0+x_1+x_2\},\\
\Lambda_2&=\{[x_0:x_1:x_2]\in\Lambda \,:\, 2x_1>x_0+x_1+x_2\},\\
\Lambda_3&=\{[x_0:x_1:x_2]\in\Lambda \,:\, 2x_2>x_0+x_1+x_2\},\\
\Lambda_4&=\{[x_0:x_1:x_2]\in\Lambda \,:\, 2x_i \le x_0 + x_1 + x_2 \, \text{ for all } i\in \{0,1,2\} \}.
\end{align*}
The {\em Reverse algorithm} $(\Lambda, F_{\mathrm{R}})$ is defined by the map $F_{\mathrm{R}}:\Lambda \to \Lambda$, 
\begin{align}\label{eq:Frdef}
F_{\mathrm{R}}([x_0:x_1:x_2])
=\begin{cases}
[x_0-x_1-x_2:x_1:x_2] & \text{if } [x_0:x_1:x_2]\in\Lambda_1, \\
[x_0:-x_0+x_1-x_2:x_2] & \text{if } [x_0:x_1:x_2]\in\Lambda_2, \\
[x_0:x_1:-x_0-x_1+x_2] & \text{if } [x_0:x_1:x_2]\in\Lambda_3, \\
[-x_0+x_1+x_2:x_0-x_1+x_2:x_0+x_1-x_2] & \text{if } [x_0:x_1:x_2]\in\Lambda_4.
\end{cases} 
\end{align}
\end{definition}
It is easy to check that\footnote{Here and in the sequel, we often ignore sets of measure zero.} $F_{\mathrm{R}}(\Lambda_i)=\Lambda$ holds for $i\in\{1,2,3,4\}$. 

In the above definition, we defined the Reverse algorithm in a subset of $\mathbb{P}^2$ in order to emphasize on its relation to the Arnoux-Rauzy algorithm $(\mathcal{R}, F_{\mathrm{AR}})$, and because the formulas are easier in this projective setting. Later, in \eqref{eq:smallFrdef}, we will define a version of $F_\mathrm{R}$ that lives in a subset of $\mathbb{R}^2$.

\begin{remark}\label{rem:sorted}
One can also define a sorted version of the Reverse algorithm. This sorted algorithm is defined on $\Lambda'=\{[x_0:x_1:x_2]\in \mathbb{P}^2\,:\, x_0>x_1>x_2>0\}$ by $\mathop{sort}\circ F_{\mathrm{R}}$, where $\mathop{sort}:\mathbb{P}^2 \to \mathbb{P}^2$ orders the coordinates of a vector descendingly. In the present paper, we mainly deal with the unsorted version of the Reverse algorithm from Definition~\ref{def:Reverse}. The sorted version is briefly discussed in \Cref{sec:appA}.
\end{remark}

Since the Reverse algorithm extends the Arnoux-Rauzy algorithm to a set of full measure, it is desirable to prove the above-mentioned results on the Arnoux-Rauzy algorithm also for the Reverse algorithm. Although results of this kind are already known for algorithms that are defined on a set of full measure, like, for instance, for the Brun algorithm \cite{Delecroix-Hejda-Steiner,BST:23,ABMST25} or the Cassaigne-Selmer algorithm~\cite{CCL22}, the Reverse algorithm deserves particular interest. Firstly, contrary to the algorithms mentioned so far, the Reverse algorithm is not {\em unimodular}, in particular, its action on $\Lambda_4$ corresponds to the nonunimodular matrix $M_4$ defined in \eqref{eq:matricesM} below. For nonunimodular algorithms the theory is significantly less developed and $S$-adic languages or dynamical systems attached to such algorithms are much less studied (see \cite{Langeveld-Rossi-Thuswaldner} for a study of the $1$-dimensional $N$-continued fraction algorithm, which is nonunimodular for $N>1$). Secondly, natural generalizations of the Arnoux-Rauzy algorithm to higher dimensions are strongly, and even exponentially, convergent (see~\cite{AD:19}). This makes the Reverse algorithm a candidate for an algorithm that is defined on a set of positive measure and that remains strongly convergent also when suitably generalized to higher dimensions. This is remarkable, because according to numerical experiments performed in~\cite{BST21}, the known classical algorithms seem to cease to be strongly convergent in high dimensions. So far, an algorithm that is defined on a set of positive measure being strongly convergent in all dimensions seems to be known only if we allow for negative entries in the matrices defining the algorithm (we refer to~\cite{Lagarias:94} for an example of such a ``nonpositive'' algorithm). 

In the present paper, we concentrate on the $2$-dimensional Reverse algorithm from Definition~\ref{def:Reverse}. Let 
\[
\Delta=\{\bx\in\mathbb {R}^3_{>0} \,:\, \|\bx\|_1=1\},
\]
and define the isomorphism (the superscript ``$t$'' in front of a vector denotes transposition)
\[
\kappa: \Lambda \to \Delta; \quad [x_0:x_1:x_2] \mapsto \,^{\displaystyle ^t}\!\bigg(\frac{x_0}{x_0+x_1+x_2},\frac{x_1}{x_0+x_1+x_2},\frac{x_2}{x_0+x_1+x_2}\bigg).
\]
In the sequel, it will be more convenient to work with the {\em projectivized version} $(\Delta,f_{\mathrm{R}})$ of the Reverse algorithm $(\Lambda,F_{\mathrm{R}})$ that is defined by the requirement that the diagram
\begin{equation}\label{commDiagfF}
\begin{tikzcd}
\Lambda \arrow{r}{F_\mathrm{R}} \arrow{d}[swap]{\kappa} & \Lambda \arrow{d}{\kappa} \\
\Delta \arrow{r}[swap]{f_\mathrm{R}}  & \Delta 
\end{tikzcd}
\end{equation}
is commutative. Thus $(\Delta,f_{\mathrm{R}})$ and $(\Lambda, F_{\mathrm{R}})$ are isomorphic versions of the Reverse algorithm. For later reference, we want to give an explicit definition of $f_{\mathrm{R}}$. Set $\Delta(i) := \kappa(\Lambda_i)$, $i\in\{1,2,3,4\}$, to obtain the partition $\{\Delta(1),\Delta(2),\Delta(3),\Delta(4)\}$ of $\Delta$. Let $\mathcal{M}(3,\mathbb{N})$ be the set of regular $3\times 3$ nonnegative integer matrices and set
\begin{align}\label{eq:coReverse}
A:\Delta \to \mathcal{M}(3,\mathbb{N});\quad  \bx \mapsto \,^t\!M_i  \quad \text{if and only if} \quad \bx \in\Delta(i) \qquad(i\in\{1,2,3,4\}),
\end{align}
where
\begin{align}\label{eq:matricesM}
M_1=
\begin{pmatrix}
1 & 1 & 1\\
0 & 1 & 0\\
0 & 0 & 1
\end{pmatrix}, \quad 
M_2=
\begin{pmatrix}
1 & 0 & 0\\
1 & 1 & 1\\
0 & 0 & 1
\end{pmatrix}, \quad 
M_3=
\begin{pmatrix}
1 & 0 & 0\\
0 & 1 & 0\\
1 & 1 & 1
\end{pmatrix}, \quad 
M_4=
\begin{pmatrix}
0 & 1 & 1\\
1 & 0 & 1\\
1 & 1 & 0
\end{pmatrix}.
\end{align}
Then the projectivized version of the Reverse algorithm $(\Delta,f_\mathrm{R})$ is given by the map
\begin{equation}\label{eq:frexpl}
 f_{\mathrm{R}}: \Delta\to\Delta;\qquad \bx \mapsto \frac{\,^t\!A(\bx)^{-1}\bx}{\|\,^t\!A(\bx)^{-1}\bx\|_1}.
\end{equation}
The transposition in the definition of $A$ in \eqref{eq:coReverse} is needed because it entails that $A$ forms a {\em cocycle} in a sense that is defined in Section~\ref{sec:MUl} below.
Using the matrices in \eqref{eq:matricesM}, we can write out the map $f_{\mathrm{R}}$ given in \eqref{eq:frexpl} as
\begin{align}\label{eq:smallFrdef}
f_{\mathrm{R}}: 
\begin{pmatrix}x_0 \\ x_1\\ x_2\end{pmatrix} \mapsto
\begin{cases}
\,^{\displaystyle ^t}\!\!\big(\frac{x_0-x_1-x_2}{x_0}, \frac{x_1}{x_0}, \frac{x_2}{x_0}\big)  & \text{if }\bx\in\Delta(1), \\[2pt]
\,^{\displaystyle ^t}\!\!\big(\frac{x_0}{x_1}, \frac{-x_0+x_1-x_2}{x_1}, \frac{x_2}{x_1}\big)  & \text{if }\bx\in\Delta(2), \\[2pt]
\,^{\displaystyle ^t}\!\!\big(\frac{x_0}{x_2}, \frac{x_1}{x_2}, \frac{-x_0-x_1+x_2}{x_2}\big)  & \text{if }\bx\in\Delta(3), \\[2pt]
\,^{\displaystyle ^t}\!\!\big(\frac{-x_0+x_1+x_2}{x_0+x_1+x_2}, \frac{x_0-x_1+x_2}{x_0+x_1+x_2}, \frac{x_0+x_1-x_2}{x_0+x_1+x_2}\big) & \text{if }\bx\in\Delta(4).
\end{cases}
\end{align}
By comparing \eqref{eq:Frdef} and \eqref{eq:smallFrdef}, it is easy to see that the function $f_{\mathrm{R}}$  in \eqref{eq:smallFrdef} is the same as the one satisfying \eqref{commDiagfF}.

In \cite{AL18}, the authors construct a natural extension of the Reverse algorithm. Using this natural extension they show that the absolutely continuous measure $\mu$ with density
\begin{align}\label{eq:density}
h(x_0,x_1,x_2)=\frac{4}{\pi^2(1-x_0)(1-x_1)(1-x_2)}
\end{align}
is an invariant probability measure for the Reverse algorithm $(\Delta, f_\mathrm{R})$.
The density $h$ has three singular points $(1,0,0), (0,1,0), (0,0,1)$, nevertheless, the total mass of $\mu$ is finite. Indeed, 
\begin{align*}
\mu(\Delta)=&\frac4{\pi^2}\int_{0}^{1}\int_{0}^{1-x}\frac{1}{(1-x)(1-y)(x+y)}dydx\\
=&\frac4{\pi^2}\int_{0}^{1}\frac{1}{(1-x)}\int_{0}^{1-x}\frac{1}{(x+1)}\left(\frac{1}{1-y}+\frac{1}{x+y}\right)dydx\\
=&\frac4{\pi^2}\int_{0}^{1}\frac{2\log{x}}{(x+1)(x-1)}dx
=-\frac8{\pi^2}\int_{0}^{1}\sum_{n=0}^{\infty}x^{2n}\log{x}dx\\
=&\frac8{\pi^2}\sum_{n=0}^{\infty}\frac{1}{(2n+1)^2}
=1
\end{align*}
(see also \cite[Remark~18]{AL18}).

We want to attach substitutions and ``directive sequences'' of substitutions to the Reverse algorithm. Before we do this, we provide some definitions. 

For a finite set $\cA$, called the {\em alphabet},
let $\cA^*$ be the set of finite words over $\cA$. Then $\cA^*$ is a free monoid w.r.t.\ the operation of concatenation.
Moreover, let $\cA^\N$ be the set of one-sided infinite words over $\cA$. For a word $w=w_0\cdots w_{n-1}\in\cA^*$, let $|w|=n$ be the {\em length} of $w$, {\em i.e.}, the number of letters in $w$. The only word of length $0$ is called the {\em empty word} and will be denoted by $\epsilon$.
For $v=v_0\cdots v_{m-1}\in\cA^*\setminus\{\epsilon\}$ let $|w|_v$ denote the number of (possibly overlapping) occurrences of $v$ in $w$, {\it i.e.},
\[
|w|_v = \#\{
i \in \{0,\ldots, n-m\} \;:\;
w_i\cdots w_{i+m-1} = v_0\cdots v_{m-1}
\}
\]
(note that $|w|_v=0$ if $m > n$). The {\em abelianization function} $\mathbf{l}:\cA^*\to\N^{\#\cA}$ is given by $v \mapsto (|v|_i)_{i\in \cA}$. A {\em factor} of a finite or infinite word $w=w_0w_1\cdots \in \cA^*\cup \cA^\N$ is a finite word of the form $v=w_k\cdots w_{\ell}$ ($k\le \ell$). We write $v\prec w$, if $v$ is a factor of $w$. If $k=0$, then the factor $v$ is called a {\em prefix} of $w$. 

Fix $C>0$. A pair $(v_1,v_2)\in\cA^*\times \cA^*$ of words\footnote{Note that $v_1$ and $v_2$ are not assumed to have the same length here.} is called {\em $C$-letter balanced}, if for each $i\in\cA$ we have $\big| |v_1|_i-|v_2|_i\big| \le C$. A finite or infinite word $w \in \cA^*\cup \cA^\N$ is called {\em $C$-letter balanced}, if every pair $(v_1,v_2)$ of factors of $w$ satisfying $|v_1|=|v_2|$ is $C$-letter balanced. In each case, {\em letter balanced} means $C$-letter balanced for some $C>0$. A language $\cL\subset \mathcal{A}^*$ is called {\em $C$-factor balanced}, if
\begin{align*}
\big| |w|_v-|w'|_v \big| \le C \quad \text{for all $w, w'\in\cL$ with $|w|=|w'|$}
\end{align*}
holds for each $v\in\mathcal{A}^*\setminus\{\epsilon\}$.
It is called {\em factor balanced}, if it is $C$-factor balanced for some $C>0$.

A {\em substitution} $\sigma$ is an endomorphism of the free monoid $\mathcal{A}^*$ ({\em i.e.}, $\sigma(vw)=\sigma(v)\sigma(w)$ for all $v,w \in \mathcal{A}^*$) that is {\em nonerasing} in the sense that $|\sigma(i)|\ge 1$ holds for each $i\in\mathcal{A}$.
A substitution over the alphabet $\mathcal{A}$ is {\em left} (resp.\ \em{right}) \em{proper}, if the image of each letter starts (resp.\ ends) with the same letter.  If $\sigma$ is a substitution over the alphabet $\cA$, the {\em incidence matrix }of $\sigma$ is given by the $|\cA|\times|\cA|$-matrix $B_\sigma=(|\sigma(j)|_i)_{1\le i,j\le |\cA|}$. It is immediate from the definitions that $\mathbf{l}\circ\sigma=B_\sigma\circ \mathbf{l}$. In this sense, $B_\sigma$ can be regarded as the abelianization of $\sigma$.

Let $S$ be a set of substitutions over the same alphabet $\mathcal{A}$. A {\em directive sequence} over $S$ is a sequence $\btau=(\tau_n)_{n\in \N}$. Since we will often need compositions of consecutive substitutions of a directive sequence, we use the notation
\[ 
\tau_{[k,\ell)} = \tau_k\circ\cdots \circ \tau_{\ell-1} \qquad (k\le \ell),
\]
where $\tau_{[k,k)}$ is the {\em identity substitution} defined by $i\mapsto i$ ($i\in \cA$).
The {\em $S$-adic language} of a directive sequence $\btau=(\tau_n)_{n\in\N}$ is defined as 
\[
\cL_{\btau} = \{v \prec \tau_{[0,n)}(i) \;:\; n\in \N, \, i\in \cA \}.
\]
Let $\Sigma((\tau_n)_{n\in\N})=(\tau_{n+1})_{n\in\N}$ be the left shift on the set of directive sequences.
We also use the languages $\cL_{\btau}^{(n)}=\cL_{\Sigma^n\btau}$ of the shifted directive sequences $\Sigma^n\btau$.

We study the substitutions over the alphabet $\cA=\{1,2,3\}$ given by
\begin{equation}\label{eq:reversSubst} 
\sigma_1:
			\begin{cases}
				1\mapsto 1,\\
				2\mapsto 21,\\
				3\mapsto 31,\\
			\end{cases}  \qquad 
\sigma_2:
			\begin{cases}
				1\mapsto 12,\\
				2\mapsto 2,\\
				3\mapsto 32,\\
			\end{cases}  \qquad 
\sigma_3:
			\begin{cases}
				1\mapsto 13,\\
				2\mapsto 23,\\
				3\mapsto 3,\\
			\end{cases}  \qquad 
\sigma_4:
			\begin{cases}
				1\mapsto 23,\\
				2\mapsto 31,\\
				3\mapsto 12.\\
			\end{cases}   
\end{equation}
Note that $B_{\sigma_i}=M_i$, {\it i.e.}, the incidence matrices of $\sigma_i$ is given by the matrix $M_i$ from \eqref{eq:matricesM} ($i\in\{1,2,3,4\}$). Note that, like for the well-known {\em Arnoux-Rauzy substitutions}~$\sigma_1$, $\sigma_2$, and~$\sigma_3$, also the choice of $\sigma_4$ is not ``canonical''. One could redefine each of these substitutions by permuting the images of each letter in an arbitrary way, without changing the incidence matrix. Our definition of~$\sigma_4$ is the most ``symmetric'' version. 
We expect that redefining the substitutions in a less symmetric way leads to worse balance properties, however, we will not go into this issue in the present paper.  

When iterating the map $f_\mathrm{R}$ of the Reverse algorithm, the matrix valued map $A$ produces an infinite sequence of matrices $(M_{i_n})_{n\in\N}\in \{M_1,M_2,M_3,M_4\}^\N$. Because $M_i$ is the incidence matrix of the substitution $\sigma_i$, we can attach to the sequence $(M_{i_n})_{n\in\N}$ the directive sequence $(\sigma_{i_n})_{n\in\N}\in \{\sigma_1,\sigma_2,\sigma_3,\sigma_4\}^\N$. Thus the Reverse algorithm can be seen as a device that generates infinite sequences of substitutions. 
To be more precise, in Lemma~\ref{lem:topconv} we will show that the Reverse algorithm is a.e.\ topologically convergent in the sense that we can attach to a.e.\ $\bx\in\Delta$ a sequence from $\{\sigma_1,\sigma_2,\sigma_3,\sigma_4\}^\N$ in a one-to-one way. Indeed, there is a continuous, surjective, a.e.\ bijective function $\Phi:\{\sigma_1,\sigma_2,\sigma_3,\sigma_4\}^\N\to\Delta$ such that the diagram 
\[
\begin{tikzcd}
\{\sigma_1,\sigma_2,\sigma_3,\sigma_4\}^\N\arrow[r, "\Sigma"]\arrow[d,"\Phi"] &\{\sigma_1,\sigma_2,\sigma_3,\sigma_4\}^\N \arrow[d, "\Phi"] \\
\Delta \arrow[r, "f_{\mathrm{R}}"]& \Delta
\end{tikzcd}
\]
commutes. Thus, we have the measurable conjugacy $(\Delta,f_{\mathrm{R}},\mu) \cong (\{\sigma_1,\sigma_2,\sigma_3,\sigma_4\}^\N, \Sigma, \Phi^*\mu)$, where $\Phi^*\mu=\mu\circ\Phi$ is the pullback measure of $\mu$. Moreover, $(\Delta,f_{\mathrm{R}})$ is a topological factor of $(\{\sigma_1,\sigma_2,\sigma_3,\sigma_4\}^\N, \Sigma)$. 
In a certain sense, the dynamical system on the directive sequences forms a ``nonabelian'' version of the Reverse algorithm. It turns out that the algorithm can be used to derive properties of these sequences of substitutions and vice versa (see for instance \cite{BST:23,ABMST25} for more details on this).

The aim of the present paper is to prove the following results. Our first result, which is proved in Section~\ref{sec:erg}, is on ergodicity of the Reverse algorithm.

\begin{theorem}\label{ergodicity}
The Reverse algorithm $(\Delta, f_{\mathrm{R}})$ is ergodic with respect to the invariant measure $\mu$ with density $h$ defined in \eqref{eq:density}.
\end{theorem}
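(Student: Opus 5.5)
To prove Theorem~\ref{ergodicity}, I would use the standard Rényi/Schweiger route for Markov maps with full branches: show that the cylinder maps of $f_{\mathrm{R}}$ have bounded distortion on a suitable induced system, deduce a Rényi condition, and conclude ergodicity via a Knopp-type density argument. Since $\mu$ is equivalent to Lebesgue measure $\lambda$ on $\Delta$ (the density $h$ is positive), it suffices to prove that every $f_{\mathrm{R}}$-invariant measurable set $E$ has $\lambda(E)\in\{0,\lambda(\Delta)\}$.

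First I would describe the cylinders. For a word $i_0\cdots i_{n-1}\in\{1,2,3,4\}^n$, the cylinder $\Delta(i_0\cdots i_{n-1})=\{\bx : f_{\mathrm{R}}^k\bx\in\Delta(i_k),\ 0\le k<n\}$ is the image of $\Delta$ under the projective map $\bx\mapsto M\bx/\|M\bx\|_1$, where $M=\,^tA$-products, i.e.\ $M={}^tM_{i_0}\cdots{}^tM_{i_{n-1}}$ up to the transposition convention. This holds because each branch of $f_{\mathrm{R}}$ maps $\Delta(i)$ onto $\Delta$ (the full-branch property noted after Definition~\ref{def:Reverse}). The Jacobian of such a projective map on $\Delta$ is $|\det M|/\|M\bx\|_1^{3}$. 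Hence the distortion on the cylinder is bounded by $(\max_j c_j/\min_j c_j)^3$, where $c_j$ are the column sums of $M$. So a Rényi condition $\sup_{\bx,\by}J(\bx)/J(\by)\le K$ holds on cylinders whose matrix has comparable column norms, with $K$ independent of the cylinder.

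The main obstacle is that column norms are not uniformly comparable. Long runs of a single Arnoux-Rauzy matrix $M_i$, or alternations of only two of them, produce cylinders accumulating at the boundary and vertices of $\Delta$, where $h$ is singular; there the distortion blows up. To handle this, I would pass to an induced (acceleration) map. The key structural fact is that the positive matrix $M_4$ has comparable columns, and more generally any product containing a fixed positive block, for instance one containing $M_4$ or the word $M_1M_2M_3$ type patterns, has column sums within a bounded ratio. I would therefore define a return time to the set of points whose next block of digits is such a "good" word $w$, and consider the induced map $g$ on $\Delta(w)$. Its branches are again full projective branches onto $\Delta$ (composition of full branches), and each branch matrix ends with the positive block. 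A standard lemma, for instance from Schweiger's book, gives $\|P Q\bx\|_1$ comparable for positive $Q$ with bounded entries, so the Rényi condition holds uniformly for $g$.

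With bounded distortion for $g$ in hand, let $E$ be invariant with $\lambda(E\cap\Delta(w))>0$. By Lebesgue density and the fact that cylinders of $g$ shrink, which is the a.e.\ topological convergence of Lemma~\ref{lem:topconv}, for $\varepsilon>0$ I can find a $g$-cylinder $C$ with $\lambda(E\cap C)\ge(1-\varepsilon)\lambda(C)$. Pushing forward by the full branch $g^n\colon C\to\Delta$ with distortion $\le K$ gives $\lambda(\Delta\setminus E)\le K\varepsilon\lambda(\Delta)$, so $E$ has full measure. If instead $\lambda(E\cap\Delta(w))=0$ for both $E$ and its complement, I get a contradiction with the fact that a.e.\ orbit enters $\Delta(w)$; this holds because $\Delta(w)$ has positive measure and the preimages of $\Delta(w)$ cover $\Delta$ a.e., by the same density argument applied to $f_{\mathrm{R}}$ restricted to $\Delta(w)$-visiting points. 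Equivalently, ergodicity of the induced system plus $\bigcup_n f_{\mathrm{R}}^{-n}\Delta(w)=\Delta$ a.e.\ yields ergodicity of $f_{\mathrm{R}}$. I would verify the last covering statement using the finite invariant measure $\mu$ and Poincaré recurrence combined with the positivity of $\mu$ on every cylinder.
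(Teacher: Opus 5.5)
Your architecture is the paper's. You accelerate $f_{\mathrm{R}}$ to a jump transformation whose branch matrices all end in a fixed ``good'' block $w$, you get a uniform R\'enyi constant from $C=(\max_j c_j/\min_j c_j)^3$ with $c_j$ the column sums, and you lift ergodicity back to $f_{\mathrm{R}}$. (To have full branches onto $\Delta$ you must iterate until $w$ has been completed; the first-return map to $\Delta(w)$ has branches onto $\Delta(w)$.)

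The genuine gap is the step that makes this work at all: that almost every orbit enters $\Delta(w)$. You need it for $g$ to be defined a.e.\ with full branches, and for ergodicity to pass back to $f_{\mathrm{R}}$. Neither of your justifications gives it. Poincar\'e recurrence only says that a.e.\ point \emph{of} $\Delta(w)$ returns there. Combined with positivity of $\mu$ on cylinders, it says nothing about the invariant set of points that never visit $\Delta(w)$. For instance, the average of the uniform Bernoulli measures on $\{1,2,3,4\}^{\N}$ and on $\{1,2,3\}^{\N}$ is shift invariant and charges every cylinder, yet gives mass $\frac12$ to the invariant set of sequences avoiding $4$. Your other justification, ``the same density argument applied to $f_{\mathrm{R}}$'', is circular: $f_{\mathrm{R}}$ has no uniform distortion bound, which is why you induced in the first place. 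For $w=4$, the set of points never visiting $\Delta(4)$ is (up to boundaries) the Rauzy gasket. Its Lebesgue-nullity is a genuine theorem that the paper imports from the literature, and no soft argument of this type can replace it.

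The natural repair is to take $w=4$ and use that theorem, but then your distortion step fails. $M_4$ is not positive (its diagonal is zero), so the lemma for $PQ$ with $Q$ positive does not apply. For a general nonnegative $P$ with column sums $c_0,c_1,c_2$, the column sums $c_1+c_2,\,c_0+c_2,\,c_0+c_1$ of $PM_4$ can have unbounded ratio. The missing ingredient is Lemma~\ref{A0+A1>A2}: for every product of the matrices in \eqref{eq:matricesM}, the column sums satisfy the strict triangle inequality $c_{\pi_0}+c_{\pi_1}>c_{\pi_2}$ (by induction on the length). This yields the R\'enyi constant $<2^3$ of Lemma~\ref{an=4} for all cylinders ending in $4$. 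If you prefer a positive block such as $w=123$, you must first prove that a.e.\ digit sequence contains $w$, which in practice requires ergodicity of the jump transformation at $4$ anyway.

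Two smaller points:
\begin{itemize}
\item A positive block controls distortion only when it is the \emph{last} factor. For example, $QM_1^k$ has column sums of orders $1,k,k$. So ``containing'' must be ``ending with''.
\item Cylinders are triangles that can be arbitrarily thin, so the Lebesgue density theorem does not apply directly. Use instead that cylinders generate the Borel $\sigma$-algebra (Proposition~\ref{TC}) together with martingale convergence, which is exactly what R\'enyi's criterion packages.
\end{itemize}
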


Our second result contains an estimate of the second Lyapunov exponent of the Reverse algorithm. Labb\'e~\cite{Lab15} did some computer experiments that indicate that the second Lyapunov exponent of the Reverse algorithm is about $-0.103\ldots$ However, these experiments do not constitute a rigorous proof of the negativity of the second Lyapunov exponent. We are able to give the following result, which will be proved in Section~\ref{sec:lyap}.

\begin{theorem}\label{second Lyapunov negative}
The second Lyapunov exponent for the Reverse algorithm is negative. In particular, it is less than $-0.020608$. Thus the Reverse algorithm is a.e.\ exponentially convergent.
\end{theorem}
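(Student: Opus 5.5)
Theorem~\ref{second Lyapunov negative} asks for a rigorous upper bound on $\lambda_2$, and I would obtain it from the standard Lagarias-type reduction for multidimensional continued fraction cocycles. Since $\mu$ is an ergodic invariant probability measure (Theorem~\ref{ergodicity}), the Oseledets theorem applies to the cocycle $A$ provided $\log\|A\|\in L^1(\mu)$. This integrability is trivial here, because $A$ takes only the four values $^t\!M_i$. Hence the Lyapunov exponents $\lambda_1\ge\lambda_2\ge\lambda_3$ exist and are $\mu$-a.e.\ constant.

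First I compute the sum $\lambda_1+\lambda_2+\lambda_3=\int\log|\det A|\,d\mu$. We have $\det M_1=\det M_2=\det M_3=1$ and $\det M_4=2$, so this sum equals $\mu(\Delta(4))\log 2$, and $\mu(\Delta(4))$ can be computed explicitly from the density $h$. Second, I compute $\lambda_1=\int\log\|{}^t\!A(\bx)^{-1}\bx\|_1^{-1}\,d\mu$. This is the usual formula for the expansion of the $\ell^1$-norm along the orbit, since the normalization factor in \eqref{eq:frexpl} is exactly the one-step growth. On each piece this is explicit: it equals $\log(1/x_0)$ on $\Delta(1)$, and symmetrically on $\Delta(2),\Delta(3)$, and it equals $\log(1/(x_0+x_1+x_2))=0$ on $\Delta(4)$. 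Hence $\lambda_1$ is a dilogarithm-type integral against $h$ that can be evaluated exactly or bounded sharply.

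Third, for $\lambda_2$ I use the exterior square, via $\lambda_1+\lambda_2=\lim_n\frac1n\log\|\wedge^2 A^{(n)}\|$. An easier route is to bound $\lambda_3$ from below and then write $\lambda_2=\mathrm{sum}-\lambda_1-\lambda_3$. Equivalently, I bound $\lambda_1+\lambda_2$ from above by $\frac1N\int\log\|\wedge^2 A^{(N)}(\bx)\|\,d\mu$ for a suitable block length $N$ and a cleverly chosen norm. Subadditivity makes every such finite-$N$ quantity an upper bound for $\lambda_1+\lambda_2$. The integrand is piecewise constant on the cylinder sets of length $N$, so the bound reduces to the finite sum $\sum_{\text{cylinders }C}\mu(C)\log\|\wedge^2 M_C\|$. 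The measures $\mu(C)$ are integrals of $h$ over explicit triangles, because the cylinders are projective images of $\Delta$ under products of the $M_i$. These can be computed exactly via the closed form of the invariant density, using the natural extension of \cite{AL18}, or bounded rigorously by interval arithmetic. Subtracting the exact value of $\lambda_1$ then gives $\lambda_2<-0.020608$, and negativity of $\lambda_2$ implies a.e.\ exponential convergence by the standard argument comparing contraction of the cone $^t\!A^{(n)}\Delta$ in the direction transverse to the limit point.

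The main obstacle is the third step. A naive norm with small $N$ will likely give a positive bound, because the Arnoux--Rauzy-type steps $M_1,M_2,M_3$ are parabolic-like near the corners where $h$ blows up. I would first try the sup-norm on $\wedge^2$ restricted to the positive cone, which amounts to using $2\times2$ minors, with $N$ around $6$ to $10$ and computer-assisted enumeration. I would accelerate by inducing on $\Delta(4)$, or on first returns away from the singular corners, to tame long runs of $\sigma_1$ (respectively $\sigma_2,\sigma_3$). The infinite families of runs would then be handled by explicit formulas for $M_1^k$, namely $\wedge^2 M_1^k$ with linear growth in $k$, summed as geometric-type series against the cylinder measures.
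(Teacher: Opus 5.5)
\paragraph{The computation of $\lambda_1$ on $\Delta(4)$ is wrong, and this sinks the argument.}
Your plan is to bound $\lambda_1+\lambda_2$ by $\frac1N\int\log\|\wedge^2A^{(N)}\|\,d\mu$, which is constant on each of the $4^N$ cylinders, and then subtract an exactly computed $\lambda_1$. In principle this is a legitimate alternative to the paper's argument. As written, however, it fails. On $\Delta(4)$ you have
\[
{}^t\!A(\bx)^{-1}\bx=M_4^{-1}\bx=\tfrac12\,{}^t(-x_0+x_1+x_2,\;x_0-x_1+x_2,\;x_0+x_1-x_2),
\]
since $\det M_4=2$. The $1$-norm of this vector is $\frac12$, not $x_0+x_1+x_2=1$. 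The denominator $x_0+x_1+x_2$ in \eqref{eq:smallFrdef} only appears because the factor $\frac12$ cancels in the projective map $f_{\mathrm R}$; it does not cancel in the growth rate. So the integrand for $\lambda_1$ on $\Delta(4)$ is $\log 2$, and
\[
\lambda_1=3\int_{\Delta(1)}\log\frac1{x_0}\,d\mu+\mu(\Delta(4))\log 2\approx 0.281+0.124\approx 0.405 ,
\]
using $\mu(\Delta(4))=1-3\mu(\Delta(1))\approx 0.179$.

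Your value of $\lambda_1$ is therefore too small by $\mu(\Delta(4))\log 2\approx 0.124$. Because you subtract $\lambda_1$, this error passes one-to-one into your bound. For every $N$ your final bound would be at least $\lambda_2+0.124$, which is positive if $\lambda_2$ is near Labb\'e's numerical value $-0.103$. With this slip the method cannot prove negativity for any block length. Once it is corrected, the method can work, since $\frac1N\int\log\|\wedge^2A^{(N)}\|\,d\mu-\lambda_1\to\lambda_2$.

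\paragraph{Two smaller points.}
First, the finite-$N$ quantity bounds $\lambda_1+\lambda_2$ from above only for a submultiplicative (operator) norm on $\wedge^2$. The max-entry norm of the matrix of $2\times2$ minors is not submultiplicative. Repairing it costs an extra $\frac{\log 3}{N}$, about $0.09$ at $N=12$, which is as large as $|\lambda_2|$ itself.

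Second, the constant $-0.020608$ is the output of the paper's particular computation. Your route would produce its own constant, so you cannot claim this one without actually carrying out the rigorous computation.

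\paragraph{How your route compares with the paper's.}
The paper never computes $\lambda_1$. It uses the cocycle $D^{(n)}(\bx)=\Pi A^{(n)}(\bx)H(\bx)$, which is $A^{(n)}$ restricted to the invariant hyperplane bundle $\bx^\perp$ and carries only $\lambda_2$ and $\lambda_3$. It proves $\lambda_2(A)\le\lambda_1(D)$ and bounds $\frac1{12}\int\log\|D^{(12)}\|_1\,d\mu$ cylinder by cylinder, using the maximum of $\log\|D^{(12)}\|_1$ together with the max/min of the density. The corner cylinders $\Delta(i^{12})$ are handled separately.
\begin{itemize}
\item The paper's approach targets $\lambda_2$ directly, so nothing has to cancel against $\lambda_1$. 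The price is an integrand that depends on $\bx$ within each cylinder.
\item Your approach has an integrand that is constant on cylinders and an explicit formula for $\lambda_1$. In exchange, any error in $\lambda_1$ goes straight into the bound, as the slip above shows.
\end{itemize}
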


We note that a negative second Lyapunov exponent implies a.e.\ letter balancedness of the $S$-adic languages $\cL_{\btau} $ (see e.g.~\cite[Theorem 13.6]{ABMST25} which contains the unimodular version of this assertion; the nonunimodular case follows along the same lines). However, this does not exhibit any concrete directive sequence $\btau$ for which $\cL_{\btau}$ is letter balanced. Section~\ref{sec:bal} is devoted to the proof of the following theorem which exhibits a concrete family of languages that are even factor balanced. 
In its statement we say that a word $\zeta\in \{\sigma_1,\sigma_2,\sigma_3,\sigma_4\}^*$ has positive density in $(\tau_n)_{n\in\N} \in \{\sigma_1,\sigma_2,\sigma_3,\sigma_4\}^\N$, if\footnote{Note that $\tau_0\cdots\tau_{n-1} \in \{\sigma_1,\sigma_2,\sigma_3,\sigma_4\}^*$ is a word over the alphabet $\{\sigma_1,\sigma_2,\sigma_3,\sigma_4\}$ and not a composition of substitutions.} 
$\lim_{n\to \infty} \frac{|\tau_0\cdots\tau_{n-1}|_{\zeta}}n >0$.

\begin{theorem}\label{th:balance}
If $\btau \in \{\sigma_1,\sigma_2,\sigma_3,\sigma_4\}^\N$ is a directive sequence in which the word $({\sigma_1}{\sigma_2}{\sigma_3})^{9}$ has positive density, then $\cL_{\btau}$ is factor balanced.
\end{theorem}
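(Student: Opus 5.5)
The approach I would take is the one used for Arnoux-Rauzy and Brun directive sequences in \cite{Berthe-Cassaigne-Steiner, Delecroix-Hejda-Steiner}: reduce factor balance to letter balance of the languages $\cL_{\btau}^{(n)}$ with a uniform constant, and prove the uniform letter balance through a contraction argument on ``balance vectors'' driven by the block $\zeta=(\sigma_1\sigma_2\sigma_3)^9$. Concretely, for a pair $(u,v)$ of factors of equal length in $\cL_{\btau}^{(n)}$, I consider the difference vector $\mathbf{l}(u)-\mathbf{l}(v)$, which lies in the plane $\{\bx : \bone\cdot\bx = 0\}$ (here $\bone$ is the all-ones vector). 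Every factor of $\cL^{(n)}_{\btau}$ can be desubstituted as $u = s\,\tau_n(u')\,p$, where $u'\in\cL^{(n+1)}_{\btau}$ and $s,p$ are a proper suffix and a proper prefix of images of letters. For $\sigma_1,\sigma_2,\sigma_3,\sigma_4$ these images have length at most $2$, so $s$ and $p$ have length at most $1$. Hence
\[
\mathbf{l}(u)-\mathbf{l}(v) = M_{i_n}\bigl(\mathbf{l}(u')-\mathbf{l}(v')\bigr) + \text{(bounded error)}.
\]
However, $u'$ and $v'$ need no longer have equal lengths, so I would track the set $D_n$ of all differences $\mathbf{l}(u)-\mathbf{l}(v)$ of factors with $\bigl||u|-|v|\bigr|$ bounded. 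I would then show by induction that $D_n\subset K$ for a fixed compact set $K$ whenever a block $\zeta$ starts shortly after $n$.

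The key step is a purely combinatorial analysis of the block $\zeta$. Since $\sigma_1\sigma_2\sigma_3$ is an Arnoux-Rauzy product, its incidence matrix $M_1M_2M_3$ is primitive. I expect the composition $\sigma_{[\zeta]}$ to satisfy two properties. First, $\sigma_{[\zeta]}(i)$ and $\sigma_{[\zeta]}(j)$ are letter balanced, with letter frequencies close to the Perron eigenvector. Second, the matrix contracts the plane $\bone^\perp$, in a suitable norm adapted to its projective action, by a factor strictly smaller than the maximal expansion that the other blocks $M_1,\dots,M_4$ can cause between two occurrences of $\zeta$. The exponent $9$ is presumably chosen exactly so that this contraction beats the worst-case growth of $M_4$, which is not unimodular and has eigenvalue $-1$ on $\bone^\perp$. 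Outside $\zeta$, I only need the observation that every $M_i$ acts on $\bone^\perp$ with norm bounded by a constant, and that $M_1,M_2,M_3$ have a nonexpanding direction.

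Positive density of $\zeta$ by itself does not give bounded gaps between occurrences, and I expect this to be the main obstacle. To handle it, I would use a recognizability-type argument. Long stretches without $\zeta$ act on balance vectors through products of $M_i$ whose action on $\bone^\perp$ grows at most polynomially for the Arnoux-Rauzy parts, because Arnoux-Rauzy words are $2$-balanced \cite{Berthe-Cassaigne-Steiner}. For the occurrences of $\sigma_4$, I would check directly that $\sigma_4$ maps $C$-balanced pairs to $(C+1)$-balanced pairs thanks to its symmetric definition. The target would be an inequality of the form $C_{n} \le \lambda C_{n+k} + c$, with $\lambda<1$, each time a $\zeta$ occurs at position $n$. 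If this fails for arbitrary gaps, I would fall back on the following: the pair balance is governed only by the factors appearing in $\cL^{(n)}$, so long runs of letters from $\{\sigma_1,\sigma_2,\sigma_3\}$ could be treated as a single Arnoux-Rauzy block with constant $2$.

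Finally, I would pass from letter balance to factor balance by the standard trick of \cite{Berthe-Cassaigne-Steiner}. A factor $w$ of length $\ell$ is determined, via left properness of $\sigma_{[\zeta]}$ (all images of $\sigma_1\sigma_2\sigma_3$ start with $1$ after rotation), by a bounded ``ancestor'' in $\cL^{(m)}$. Counting occurrences of $w$ in $u$ then reduces to counting occurrences of letters or short factors in desubstituted words, where the uniform letter balance applies. The bounded desubstitution error contributes only an additive constant, which yields factor balance of $\cL_{\btau}$.
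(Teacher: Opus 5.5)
Your plan breaks down at the step you flag yourself: the stretches between consecutive occurrences of $\zeta$. Positive density only gives $g_k=o(k)$ for the gap lengths, and the mean gap can be arbitrarily large. So the factor by which a gap can enlarge imbalances must be bounded \emph{independently of its length}. A fixed contraction $\lambda$ coming from $\zeta$ cannot absorb a factor $c^{g_k}$ (your per-step norm bound) or $P(g_k)$ (your polynomial bound). Already for the periodic sequence $(\zeta\gamma)^\infty$ with $|\gamma|=G$, your recursion only closes if $\lambda P(G)<1$. That fails for large $G$, although the theorem covers this case.

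The premise behind the polynomial bound and behind your fallback is also false. Arnoux--Rauzy words are not $2$-balanced in general; Cassaigne, Ferenczi and Zamboni constructed unbalanced ones. \cite{Berthe-Cassaigne-Steiner} gives $(2h+1)$-balance only when the weak partial quotients are bounded by $h$, and nothing here guarantees that. Note also that $M_1,M_2,M_3$ do not preserve $\bone^\perp$. The relevant objects are the restrictions $\bar B_{[m,n)}\colon\bone_n^\perp\to\bone_m^\perp$.

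The missing idea is \Cref{lem:sigma_ibalance} for \emph{all four} substitutions: each $\sigma_i$ turns a $C$-balanced word into a $(C+4)$-balanced one (a $(C+2)$-balanced one for $\sigma_4$). The loss is purely additive, and you only have this for $\sigma_4$. The paper then linearizes it:
\begin{itemize}
\item \Cref{lem:claim1} builds a $\frac{44}{7}C$-balanced word with frequency $\bu_n$ whose prefix projections are relatively dense in the $C$-ball of $\bone_n^\perp$.
\item Pushing this word through $\tau_{[m,n)}$ and letting $C\to\infty$ gives $\|\bar B_{[m,n)}|_{\bone_n^\perp}\|_\infty\le 10$ for every gap, whatever its length (\Cref{lem:Mbound}).
\item Together with the factor $(5/7)^7$ from \Cref{lem:DHSLem6}, this yields $q=10(5/7)^7<1$ per occurrence. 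So $9=7+2$, the extra two blocks serving \Cref{lem:projcahnge}; $M_4$, which acts as $-\mathrm{id}$ on $\bone^\perp$, has nothing to do with the exponent.
\item The density hypothesis is used essentially only to make $\sum_k m^{g_k}q^k$ converge, where $m^{g_k}$ bounds the desubstitution remainders inside the $k$-th gap.
\end{itemize}

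Your intermediate target is also false: a fixed compact $K$ with $D_n\subset K$ whenever $\zeta$ starts near $n$ cannot exist in general. Arnoux--Rauzy blocks whose images contain equal-length factors with imbalance at least $j$ exist, because unbalanced Arnoux--Rauzy words exist. Insert such blocks rarely, with sublinear total length, each right after a copy of $\zeta$. This keeps the density of $\zeta$ positive. Since the incidence matrix of $\zeta$ is invertible, it forces imbalance of order $j$ at that copy of $\zeta$.

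Accordingly, the paper only obtains letter balance of $\cL_{\btau}^{(k)}$ for infinitely many $k$, with $k$-dependent constants. It then concludes with \cite[Theorem~4.1]{PSt:24} after regrouping into the proper substitutions $\sigma_4^k\sigma_i$. Note also that $\sigma_1\sigma_2\sigma_3$ is right proper (its images are $1213121$, $213121$, $3121$), not left proper.
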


It follows from the ergodic theorem that the family of directed sequences covered by \Cref{th:balance} has full measure w.r.t.\ any ergodic measure on the full shift $(\{\sigma_1,\sigma_2,\sigma_3,\sigma_4\}^\N,\Sigma)$, which assigns positive mass to the cylinder $({\sigma_1}{\sigma_2}{\sigma_3})^{9}$. Examples for such measures are the Bernoulli measure and the pullback measure $\Phi^*\mu$ of the measure $\mu$ (these two measures are mutually singular as can be seen by arguing as in \cite[Remark~6.2]{CCL22}). We also mention that Theorem~\ref{th:balance} is {\em a fortiori} true if $\sigma_4$ does not occur in $\btau$. Thus it contains the analogous result for the Arnoux-Rauzy substitutions $\sigma_1,\sigma_2,\sigma_3$ ({\em cf.}~\cite[Theorem~1]{Delecroix-Hejda-Steiner}). 
Moreover, by the same proof one can show variants of Theorem~\ref{th:balance} with $\sigma_1\sigma_2\sigma_3$ replaced by other blocks of substitutions with positive incidence matrix.

In \cite[Theorem~C]{CCL22} analogs of Theorem~\ref{second Lyapunov negative} and~\ref{th:balance} are proved for directive sequences of the Cassaigne algorithm. As in \cite{CCL22} and many other papers, our proof of the negativity of the second Lyapunov exponent is computational and uses ideas going back to Lagarias~\cite{Lagarias:93} (see, for instance, \cite{Hardcastle:02, Delecroix-Hejda-Steiner,BST21} for other proofs of that flavor). However, as in \cite{CCL22}, we use the $1$-norm rather than the $\infty$-norm in our argument. As for the balance result, like in the proof of \cite[Theorem~C]{CCL22}, one of the main ideas of our proof is based on some ``contractive cylinder'' (see Lemma~\ref{lem:DHSLem6}). However, since the proof of balance results strongly depends on the combinatorial properties of the involved substitutions, our proof considerable differs from the one of \cite[Theorem~C]{CCL22}. We mention that the property that we establish in  Proposition~\ref{lem:sigma_ibalance} is of crucial importance in our arguments. Therefore, we think that our proof has the potential to be generalized to substitutions having a similar property.

\section{$2$-dimensional continued fraction algorithms}\label{sec:MUl}
In this section, we provide the necessary results from the theory of multidimensional continued fraction algorithms. For the general theory of multidimensional continued fraction algorithms, we refer {\em e.g.}\ to~\cite{Schweiger:00, AL18, BST21, BST:23}. Our definition of a multidimensional continued fraction algorithm is an extension of \cite[Section~9.1]{ABMST25} to the nonunimodular situation. Since we are only interested in $2$-dimensional algorithms, we confine ourselves to this case.

\begin{definition}[$2$-dimensional continued fraction algorithm] \label{def:cf} 
Let $\Delta=\{\bx\in\mathbb {R}^3_{>0} \,:\, \|\bx\|_1=1\}$ and 
\begin{equation}\label{eq:coAllg}
A:\, \Delta \to \mathcal{M}(3,\mathbb{N})
\end{equation}
be a map from $\Delta$ to the set $\mathcal{M}(3,\mathbb{Z})$ of regular $3 {\times} 3$ nonnegative integer  matrices such that $\frac{^t{\!A}(\bx)^{-1}\bx}{\|^t{\!A}(\bx)^{-1}\bx\|_1} \in \Delta$ for all $\bx \in \Delta$.
Then the pair $(\Delta,f)$ with
\begin{equation}\label{eq:CFalgo}
f:\, \Delta \to \Delta; \quad \bx \mapsto \frac{^t{\!A}(\bx)^{-1} \bx}{\|^t{\!A}(\bx)^{-1}\bx\|_1}
\end{equation}
is called a \emph{$2$-dimensional continued fraction algorithm}.
\end{definition}

We assume throughout the paper that $(\Delta,f)$ admits an invariant measure $\mu$ that is equivalent to the Lebesgue measure on $\Delta$.

If we iterate an algorithm $(\Delta,f)$ on a point $\bx \in \Delta$, the mapping $A$ from \eqref{eq:coAllg} yields a sequence of matrices 
$(M_{a_i})_{i\in \N}$ from $\mathcal{M}(3,\mathbb{N})$. We will study such sequences when investigating the Reverse algorithm, and the following properties will be of relevance. We call a sequence $(M_{a_i})_{i\in \N}$ {\em primitive}, if for each $m\in \N$ there is $n\in \N$ with $n>m$ such that $M_{a_m} \cdots M_{a_{n-1}}$ is a positive matrix. Moreover, $(M_{a_i})_{i\in \N}$ is called {\em recurrent}, if for each $m\in \N$ there is $n\ge 1$ 
such that $(M_0,\ldots, M_{m-1}) = (M_{n},\ldots, M_{n+m-1})$. If there exists a vector $\bu\in \mathbb{R}^3_{\ge 0}$ satisfying 
\[
\bigcap_{n\ge 0}M_{a_0}\cdots M_{a_{n-1}}\mathbb{R}^d_{\ge 0}
= \mathbb{R}_{\ge 0}\bu,
\]
we say that $(M_{a_i})_{i\in \mathbb{N}}$ admits a {\em generalized right eigenvector} $\bu$. We need the following criterion.

\begin{lemma}[see {\cite[Proposition~3.5.5 and its proof]{thuswaldner2019boldsymbolsadic}}]\label{lem:3.5.5}
Let $(M_{a_i})_{i\in \mathbb{N}}$ be a sequence of matrices taken from $\mathcal{M}(3,\mathbb{N})$. If there is a positive matrix $M$ and $r\in\N$ such that $M=M_{a_n}\cdots M_{a_{n+r}}$ holds for infinitely many $n\in \N$, then $(M_{a_i})_{i\in \mathbb{N}}$ admits a generalized right eigenvector $\bu\in \mathbb{R}^d_{>0}$. 
\end{lemma}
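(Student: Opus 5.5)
The plan is to prove Lemma~\ref{lem:3.5.5} with a Hilbert projective metric (Birkhoff contraction) argument on the nested cones $C_n = M_{a_0}\cdots M_{a_{n-1}}\mathbb{R}^3_{\ge 0}$.

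\emph{Step 1: the limit is a nonzero closed cone.} Each $M_{a_i}$ is nonnegative and regular, so $M_{a_i}\mathbb{R}^3_{\ge0}\subseteq \mathbb{R}^3_{\ge0}$. Hence the cones $C_n$ form a decreasing sequence of closed cones. Intersecting with the simplex $\{\|\bx\|_1=1\}$ gives a decreasing sequence of nonempty compact sets. Regularity guarantees these sets are nonempty, since no nonzero vector is sent to $0$. Their intersection is therefore a nonempty compact set, and it suffices to show that this set is a single point $\bu/\|\bu\|_1$ with $\bu>0$.

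\emph{Step 2: contraction at the occurrences of $M$.} Let $n_1<n_2<\cdots$ be indices with $M_{a_{n_k}}\cdots M_{a_{n_k+r}}=M$. By passing to a subsequence, we may assume $n_{k+1}>n_k+r$, so the blocks are disjoint. Equip the open cone $\mathbb{R}^3_{>0}$ with Hilbert's projective metric $d_H$. Two facts are needed:
\begin{itemize}
\item every nonnegative regular matrix is a weak contraction for $d_H$, because it maps $\mathbb{R}^3_{\ge0}\setminus\{0\}$ into itself;
\item by Birkhoff's theorem, the positive matrix $M$ maps $\mathbb{R}^3_{\ge0}\setminus\{0\}$ into a set of finite $d_H$-diameter $D$ and contracts $d_H$ by the factor $\theta=\tanh(D/4)<1$.
\end{itemize}

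\emph{Step 3: the diameters tend to zero.} Write $P_n=M_{a_0}\cdots M_{a_{n-1}}$. For $m$ beyond the $k$-th occurrence, decompose $P_m$ as
\[
P_m = P_{n_1}\,M\,Q_1\,M\,Q_2\cdots M\,Q_k,
\]
where the $Q_j$ are products of nonnegative regular matrices. The innermost factor $M$ applied to $Q_k\mathbb{R}^3_{\ge0}$ produces a set of $d_H$-diameter at most $D$. Each of the remaining $k-1$ factors of $M$ multiplies the diameter by at most $\theta$, and the $Q_j$ and $P_{n_1}$ do not increase it. Note that $P_{n_1}$ maps interior points of the positive cone into the positive cone, since a regular nonnegative matrix has no zero row. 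Hence
\[
\operatorname{diam}_{d_H}\bigl(C_m\setminus\{0\}\bigr)\le D\,\theta^{k-1}\longrightarrow 0 .
\]
Because $M$ is positive, every $C_m$ with $m>n_1+r$ is contained in $P_{n_1}M\mathbb{R}^3_{\ge0}\subseteq \mathbb{R}^3_{>0}\cup\{0\}$. On this region, $d_H$-diameter tending to zero forces the normalized sets to shrink to a single point. The intersection is therefore a ray $\mathbb{R}_{\ge0}\bu$ with $\bu\in\mathbb{R}^3_{>0}$.

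The main obstacle is the bookkeeping in Step~3, namely confirming that the prefix $P_{n_1}$ and the intermediate products do not destroy positivity or enlarge $d_H$-distances. This is handled by the weak contraction property together with the observation that regular nonnegative matrices preserve the open positive cone.
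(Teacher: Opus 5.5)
Your proof is correct and complete. The Birkhoff estimate $\operatorname{diam}_{d_H}(C_m\setminus\{0\})\le D\theta^{k-1}$ is set up properly, and your final step becomes fully explicit via the inequality $\|\bx-\by\|_1\le e^{d_H(\bx,\by)}-1$, valid for $\bx,\by\in\mathbb{R}^3_{>0}$ with $\|\bx\|_1=\|\by\|_1=1$. The paper gives no argument of its own for this lemma and simply imports it from the cited survey. Your Hilbert-metric contraction argument is therefore a self-contained proof of the imported fact, by the classical route, and it even gives a geometric rate at which the cones shrink.
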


It is easy to see that the  condition of the lemma certainly holds if $(M_{a_i})_{i\in \mathbb{N}}$
is primitive and recurrent.

The transposition used in \eqref{eq:CFalgo} comes from the fact that we are interested in the \emph{linear cocycle} of the algorithm $(\Delta,f)$, which is given by~$A$. Indeed,
$$
A^{(n)}(\bx) = A(f^{n-1}\bx) \cdots A(f \bx) \, A(\bx)
$$
satisfies the \emph{cocycle property} 
$$
A^{(m+n)}(\bx) = A^{(m)}(f^n\bx)\, A^{(n)}(\bx).
$$ 
For a linear cocycle, we can define Lyapunov exponents.  To this end, let $\delta_1(M)$, $\delta_2(M)$, $\delta_3(M)$ be the singular values of a $3\times 3$ matrix~$M$ ordered in a way that $\delta_1(M)\ge \delta_2(M)\ge \delta_3(M)$. Then  
\begin{align*}\label{Lyapnov exponent}
\lambda_i(A,\bx)=\lim_{n \rightarrow \infty}\frac{1}{n}\log{\delta_i(A^{n}(\bx))}
\end{align*}
is called the {\em $i$-th Lyapunov exponent for $A$ at $\bx$}, provided that the limit exists ($i\in\{1,2,3\}$).
By definition, $\lambda_1(A,\bx) \ge\lambda_2(A,\bx) \ge\lambda_3(A,\bx)$. If $(\Delta, f,\mu)$ is ergodic, these values are the same for $\mu$-a.e.\ $\bx \in\Delta$ and we call these common values $\lambda_1(A) \ge\lambda_2(A) \ge\lambda_3(A)$ the {\em Lyapunov exponents for~$A$}. In particular, in case of ergodicity we have for the first Lyapunov exponent that 
\begin{equation}\label{eq:lyapudef}
\begin{split}
\lambda_1(A)&= \lim_{n \rightarrow \infty}\frac{1}{n}\log{\delta_1(A^{n}(\bx_0))} 
\\
&=
\lim_{n \rightarrow \infty}\frac{1}{n}\int_{\Delta}\log{\delta_1(A^{n}(\bx))} \mathrm{d}\mu(\bx) =
\lim_{n \rightarrow \infty}\frac{1}{n}\int_{\Delta}\log{\|A^{n}(\bx)\|} \mathrm{d}\mu(\bx)
\end{split}
\end{equation}
for a.e.\ $\bx_0\in\Delta$. Note that the last integral does not depend on the norm.
For details on Lyapunov exponents we refer for instance to \cite{Arnold98}. Lyapunov exponents, in particular, the second Lyapunov exponent, are intimately related to the convergence behavior of $(\Delta, f)$, see {\em e.g.} \cite{Lagarias:93,BST21}.

A \textit{cylinder} of rank $n$ in $\Delta$ is a set
\begin{equation}\label{eq:DeltaCyl}
\Delta(a_0a_1\cdots a_{n-1}):=\{\bx\in \Delta \;:\; \bx\in \Delta(a_0),\, f(\bx)\in \Delta(a_1),\, \dots,\, f^{n-1}(\bx)\in \Delta(a_{n-1})\},
\end{equation}
where $a_0\cdots a_{n-1} \in \cA^*$.
The cylinder $\Delta(a_0 \cdots a_{n-1})$ containing $\bx$ has the row vectors of $A^{(n)}(\bx)$ as vertices. According to the definition of $A$ we have
$A^{(n)}(\Delta(a_0\cdots a_{n-1})) = \{ 
^t(M_{a_0}\cdots M_{a_{n-1}})\}$,
a singleton, which we will often identify with the single matrix $^t(M_{a_0}\cdots M_{a_{n-1}})$ it contains. 

Let us recall some further facts from the general theory of multidimensional continued fraction algorithms and formulate them for the $2$-dimensional case.

\begin{definition}[Topological convergence]\label{def:topConv}
Let $(\Delta,f)$ be a $2$-dimensional continued fraction algorithm and let $\bx=\,^t(x_0,x_1,x_2)\in\Delta$.
Choose $a_0a_1\cdots\in\cA^\N$ such that $\bx\in \Delta(a_0 a_1 \cdots a_{n-1})$ holds for each $n\ge 0$.
If the diameters $d(\Delta(a_0 a_1 \cdots a_{n-1}))$ of $\Delta(a_0a_1\cdots a_{n-1})$ satisfy
\begin{align*}
\lim_{n\rightarrow \infty}d(\Delta(a_0 a_1 \cdots a_{n-1}))=0,
\end{align*}
then we say that $(\Delta,f)$ is {\em topologically convergent} at $\bx$.
\end{definition}

It is easy to see that topological convergence at $\bx$ is equivalent to the fact that
$(M_{a_i})_{i\in \mathbb{N}}$ has a generalized right eigenvector. A much stronger notion of convergence is the following one (see also \cite{Lagarias:93}, where this has been introduced).

\begin{definition}[Exponential convergence]\label{def:expConv}
Let $(\Delta,f)$ be a $2$-dimensional continued fraction algorithm and let $\bx=\,^t(x_0,x_1,x_2)\in\Delta$ be arbitrary. For $n\in\N$, let the entries of the cocycle matrix be given by 
\begin{equation}\label{eq:Acomponents}
A^{(n)}(\bx) =
\begin{pmatrix}
p_{00}^{(n)} & p_{01}^{(n)} & p_{02}^{(n)}\\
p_{10}^{(n)} & p_{11}^{(n)} & p_{12}^{(n)}\\
p_{20}^{(n)}\textbf{} & p_{21}^{(n)} & p_{22}^{(n)}
\end{pmatrix}
\end{equation}
(we suppress the dependency of the entries on $\bx$ for notational convenience). For $i\in\{0,1,2\}$ and $n\in\N$ set $p_{i}^{(n)}=p_{i0}^{(n)}+p_{i1}^{(n)}+p_{i2}^{(n)}$.  We say that $\Big(\frac{p_{i1}^{(n)}}{p_{i}^{(n)}}, \frac{p_{i2}^{(n)}}{p_{i}^{(n)}}\Big)$ is {\em exponentially convergent} to $({x_1},{x_2})$, if there exists a constant $\alpha>1$ such that 
\begin{align}\label{eq:Dioexp}
\big\|\big({p_{i1}^{(n)}}, {p_{i2}^{(n)}}\big)-{p_{i}^{(n)}}({x_1},{x_2})\big\|_\infty < \big(p_{i}^{(n)}\big)^{1-\alpha}
\end{align}
holds for each $i\in\{0,1,2\}$ and all $n$ large enough. If \eqref{eq:Dioexp} holds for a.e.\ $\bx \in \Delta$, then we say that $(\Delta,f)$ is {\em exponentially convergent}.
\end{definition}

Let $(\Delta,f)$ be a $2$-dimensional continued fraction algorithm. The supremum $\eta^*(\bx)$ of all $\alpha$ satisfying \eqref{eq:Dioexp} for each $i\in\{0,1,2\}$ and all $n$ large enough is called the {\em uniform approximation exponent of $\bx$} for the algorithm $(\Delta,f)$. It is known that, under mild conditions (including ergodicity), the uniform approximation exponent $\eta^*$ of $(\Delta,f)$ satisfies
\begin{equation}\label{eq:bestapproxexp}
\eta^*(\bx) = 1 - \frac{\lambda_2(A)}{\lambda_1(A)}
\end{equation}
for almost all $\bx\in\Delta$. In particular, if $\lambda_2(A)<0$, then $(\Delta,f)$ is exponentially convergent (see for instance \cite[Theorem~4.1]{Lagarias:93}). As stated in \Cref{prop:LagariasLyapunov} below, these conditions are satisfied for the Reverse algorithm. 

We need the following consequence of topological convergence.

\begin{proposition}[{\cite[Theorem~4 in Chapter~3]{Schweiger:00}}]\label{TC}
Let $(\Delta,f)$ be a $2$-dimensional continued fraction algorithm and let $\mathcal{B}^{(s)}$ be the $\sigma$-algebra generated by the cylinders $\Delta(a_0,a_1,\dots, a_{s-1})$ in $\Delta$ of rank~$s$. If $(\Delta,f)$ is topologically convergent everywhere, then
$\bigvee_{s=0}^\infty\mathcal{B}^{(s)}$ equals the $\sigma$-algebra of Borel sets on~$\Delta$. 
\end{proposition}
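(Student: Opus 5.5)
The plan is to prove the two inclusions between $\bigvee_{s=0}^\infty\mathcal{B}^{(s)}$ and the Borel $\sigma$-algebra $\mathcal{B}$ of $\Delta$ separately. The inclusion $\bigvee_{s}\mathcal{B}^{(s)}\subseteq\mathcal{B}$ is routine. The sets $\Delta(a)$, $a\in\cA$, are Borel: for the Reverse algorithm they are given by finitely many linear inequalities, and in general this is implicit in $A$ being measurable. The map $f$ is Borel measurable, since on each $\Delta(a)$ it is the continuous map $\bx\mapsto {}^tA(\bx)^{-1}\bx/\|{}^tA(\bx)^{-1}\bx\|_1$ for a fixed matrix. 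By \eqref{eq:DeltaCyl},
\[
\Delta(a_0\cdots a_{s-1})=\bigcap_{k=0}^{s-1} f^{-k}\big(\Delta(a_k)\big)
\]
is a finite intersection of Borel sets. Hence every generator of every $\mathcal{B}^{(s)}$ is Borel, and so is the generated $\sigma$-algebra.

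For the reverse inclusion it suffices to show that every open set $U\subseteq\Delta$ (open in the relative topology) lies in $\bigvee_s\mathcal{B}^{(s)}$, since open sets generate $\mathcal{B}$. I will use two facts. First, the sets $\Delta(a)$, $a\in\cA$, partition $\Delta$. Therefore for each $s$ the cylinders of rank $s$ also partition $\Delta$, and every $\bx$ has a unique digit sequence $a_0a_1\cdots$ with $\bx\in\Delta(a_0\cdots a_{n-1})$ for all $n$. Second, since $\cA$ is finite, there are only countably many cylinders of all ranks together.

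Now fix an open $U$ and let $\mathcal{C}_U$ be the (countable) family of all cylinders, of any rank, that are contained in $U$. I claim that $U=\bigcup_{C\in\mathcal{C}_U}C$. The inclusion ``$\supseteq$'' is trivial. For ``$\subseteq$'', take $\bx\in U$ and choose $\varepsilon>0$ with $B(\bx,\varepsilon)\cap\Delta\subseteq U$. By topological convergence at $\bx$ (Definition~\ref{def:topConv}), the diameter of $\Delta(a_0\cdots a_{n-1})$ tends to $0$. So for some $n$ we have $d(\Delta(a_0\cdots a_{n-1}))<\varepsilon$, and since this cylinder contains $\bx$ it is contained in $B(\bx,\varepsilon)\cap\Delta\subseteq U$. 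Thus $\bx$ lies in a member of $\mathcal{C}_U$. Consequently $U$ is a countable union of elements of $\bigcup_s\mathcal{B}^{(s)}$, so $U\in\bigvee_s\mathcal{B}^{(s)}$, which finishes the proof.

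The only point needing care is the one where the hypothesis ``everywhere'' is used. Topological convergence must hold at every point of $U$; otherwise some points of $U$ would not be covered by cylinders inside $U$, and the argument would only give equality modulo null sets. The argument also relies on the digit sequence of $\bx$ being well defined, which is exactly the partition property of the $\Delta(a)$ used above. Beyond that I expect no real obstacle: the statement is essentially that a countable family of sets whose members containing any given point shrink to that point generates the Borel $\sigma$-algebra of a metric space.
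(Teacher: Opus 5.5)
Your argument is correct and is the standard proof behind the cited result; the paper gives no proof of its own and only refers to Schweiger. Cylinders are Borel because the $\Delta(a)$ and $f$ are measurable, and conversely every relatively open set is a countable union of cylinders, since the cylinders containing each point shrink to that point. One small adjustment: finiteness of the digit set is not actually needed, because your union argument only uses that there are countably many cylinders, and this is automatic since the digits correspond to matrices in the countable set $\mathcal{M}(3,\mathbb{N})$.
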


Recall that a $2$-dimensional continued fraction algorithm $(\Delta,f)$ is called {\em full}, if for each word $a_0\cdots a_{n-1}$ we have $f^n(\Delta(a_0\cdots a_{n-1}))=\Delta$.
To prove ergodicity of the Reverse algorithm we will use the following criterion that goes back to R\'enyi~\cite{Renyi:57}. To be precise, the following proposition is an improvement of R\'{e}nyi's criterion due to Schweiger; details can be found in~\cite[Section~9.5]{Schweiger:95}.
 
\begin{proposition}[R\'enyi 1957; {\cite[Theorems~8 and~6 in Chapter~3]{Schweiger:00}}] \label{Renyi's Theorem}
Let $(\Delta, f)$ be a $2$-dimensional continued fraction algorithm that satisfies the following conditions. 
\begin{itemize}
\item[$(a)$] $\mathrm{Leb}(\Delta)<\infty$. 
\item[$(b)$] The algorithm is full.
\item[$(c)$] $\bigvee_{s=0}^\infty\mathcal{B}^{(s)}$ equals the $\sigma$-algebra of Borel sets on $\Delta$.
\item[$(d)$] There exists a constant $C$ such that for all admissible words $a_0 \cdots a_{n-1}$ we have
\begin{align*}
\sup_{\bx\in \Delta}\omega(a_0 \cdots a_{n-1};\bx)\le C\cdot\inf_{\bx\in \Delta}\omega(a_0 \cdots a_{n-1};\bx),
\end{align*}
where $\omega(a_0 \cdots a_{n-1};\bx)$ is the Jacobian of the inverse branch of $f^n$ that maps onto the cylinder $\Delta(a_0\cdots a_{n-1})$.
\end{itemize}
Then, $f$ is ergodic and admits a finite invariant measure $\mu$ absolutely continuous with respect to Lebesgue measure. This measure is unique up to a scalar factor.
\end{proposition}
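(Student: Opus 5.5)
This proposition is quoted from \cite{Schweiger:00}, so the plan below reconstructs the classical R\'enyi argument rather than anything specific to the paper. It runs through the transfer (Perron--Frobenius) operator of $f$ with respect to Lebesgue measure $\lambda$, which is finite by $(a)$.

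\textbf{Step 1: iterates of the transfer operator.} By fullness $(b)$, each admissible word $a_0\cdots a_{n-1}$ has an inverse branch $\psi_{a_0\cdots a_{n-1}}:\Delta\to\Delta(a_0\cdots a_{n-1})$ with Jacobian $\omega(a_0\cdots a_{n-1};\cdot)$. Hence for $g\in L^1(\lambda)$
\[
P^n g(\by)=\sum_{a_0\cdots a_{n-1}} g\bigl(\psi_{a_0\cdots a_{n-1}}\by\bigr)\,\omega(a_0\cdots a_{n-1};\by).
\]

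\textbf{Step 2: two-sided bounds from distortion.} Integrating over $\Delta$ gives $\int_\Delta \omega(a_0\cdots a_{n-1};\by)\,d\lambda(\by)=\lambda(\Delta(a_0\cdots a_{n-1}))$. Condition $(d)$ then yields, uniformly in $n$, $\by$ and the word,
\[
C^{-1}\,\frac{\lambda(\Delta(a_0\cdots a_{n-1}))}{\lambda(\Delta)}\le \omega(a_0\cdots a_{n-1};\by)\le C\,\frac{\lambda(\Delta(a_0\cdots a_{n-1}))}{\lambda(\Delta)}.
\]
Summing over all words of length $n$, whose cylinders partition $\Delta$ up to null sets, gives $C^{-1}\le \lambda(\Delta)\,P^n\bone\le C$ for every $n$.

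\textbf{Step 3: an invariant density.} Consider the Ces\`aro averages $\frac1N\sum_{n<N}P^n\bone$. They lie in a fixed order interval of $L^\infty$, so they have a weak-$*$ limit point $h$. Since $P$ is weak-$*$ continuous, $h$ satisfies $Ph=h$, and by Step~2 we have $C^{-1}\le\lambda(\Delta)\,h\le C$. Thus $d\mu=h\,d\lambda$ is a finite invariant measure that is equivalent to $\lambda$.

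\textbf{Step 4: ergodicity.} Let $E$ be a measurable set with $f^{-1}E=E$ and $\lambda(E)>0$. For any cylinder of rank $n$, invariance gives $f^{-n}E=E$, so
\[
\lambda\bigl(E\cap\Delta(a_0\cdots a_{n-1})\bigr)=\lambda\bigl(f^{-n}E\cap \Delta(a_0\cdots a_{n-1})\bigr)=\int_E \omega(a_0\cdots a_{n-1};\by)\,d\lambda(\by).
\]
By Step~2 the right-hand side is at least $c\,\lambda(\Delta(a_0\cdots a_{n-1}))$ with $c=C^{-1}\lambda(E)/\lambda(\Delta)>0$. So $E$ has relative measure at least $c$ in every cylinder of every rank.

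By $(c)$ and the martingale convergence theorem, the conditional expectations $\mathbb{E}[\bone_E\mid\mathcal{B}^{(s)}]$ converge to $\bone_E$ a.e. Their values are exactly these relative measures, so they are all $\ge c$. Hence $\bone_E\ge c>0$ a.e., i.e.\ $\lambda(\Delta\setminus E)=0$. Therefore $f$ is ergodic with respect to $\mu$.

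\textbf{Step 5: uniqueness.} Let $\nu\ll\lambda$ be another finite invariant measure. Then $\nu\ll\mu$, and the Radon--Nikodym derivative $d\nu/d\mu$ is an $f$-invariant function. Ergodicity forces it to be constant, so $\nu$ is a scalar multiple of $\mu$.

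\textbf{Main obstacle.} I expect Step~3 to need the most care. One must justify the weak-$*$ compactness argument and check that $P$ commutes with the limit. If that is delicate, I would instead pass to an $L^1$-weakly convergent subsequence, which exists because the averages are uniformly bounded and hence uniformly integrable. Step~2 itself is routine once fullness is used to see that every inverse branch is defined on all of $\Delta$.
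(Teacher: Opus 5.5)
Your argument is correct. The paper gives no proof of its own and only cites Schweiger, and your reconstruction is the standard R\'enyi argument behind that citation: the distortion bound (d) gives $C^{-1}\le\lambda(\Delta)P^n\bone\le C$, an averaging and compactness limit produces the invariant density, and the uniform lower bound on the relative measure of an invariant set in every cylinder, combined with the generating property (c), forces that set to have full measure. The step you flagged as the main obstacle is unproblematic: on $L^\infty$ the operator $P$ is the adjoint of $g\mapsto g\circ f$, which is bounded on $L^1(\lambda)$ because Step~2 gives $\int_\Delta|g\circ f|\,d\lambda=\int_\Delta|g|\,P\bone\,d\lambda\le C\lambda(\Delta)^{-1}\|g\|_1$, so $P$ is weak-$*$ continuous (and in your $L^1$-weak fallback continuity is automatic for any bounded operator).
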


Condition (d) of \Cref{Renyi's Theorem} is called {\em R\'enyi's condition} and the constant~$C$ is called {\em R\'enyi constant} for~$f$. 
When applying R\'enyi's criterion, we will use the following representation of the Jacobian of a $2$-dimensional continued fraction algorithm.

\begin{lemma}[Schweiger, {\cite[Lemma~25 in Chapter~11]{Schweiger:00}}]\label{Sch_lemma25}
The Jacobian for a $2$-dimensional continued fraction algorithm $(\Delta,f)$ is given by
\begin{align*}
\omega(a_0\cdots a_{n-1}; \;^t(x_0,x_1,x_2))=\frac{\det{A^{(n)}(\Delta(a_0\cdots a_{n-1}))}}{\big(\|A^{(n)}_0\|_1x_0+\|A^{(n)}_1\|_1x_1+\|A^{(n)}_2\|_1x_2\big)^{3}},
\end{align*}
where $A^{(n)}_0, A^{(n)}_1,A^{(n)}_2$ are the row vectors of $A^{(n)}(\Delta(a_0\cdots a_{n-1}))$.
\end{lemma}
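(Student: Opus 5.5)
The plan is to write the inverse branch of $f^n$ on the cylinder $\Delta(a_0\cdots a_{n-1})$ explicitly as a projective linear map and then compute its Jacobian with a homogeneity (cone-volume) argument. Put $B:={}^t\!A^{(n)}(\Delta(a_0\cdots a_{n-1}))=M_{a_0}\cdots M_{a_{n-1}}$. By \eqref{eq:CFalgo} and the cocycle property, $f^n(\bx)={}^t\!A^{(n)}(\bx)^{-1}\bx/\|{}^t\!A^{(n)}(\bx)^{-1}\bx\|_1$ for $\bx$ in the cylinder. Hence the inverse branch is
\[
g:\Delta\to\Delta(a_0\cdots a_{n-1}),\qquad \by\mapsto \frac{B\by}{\|B\by\|_1}=\frac{B\by}{{}^t\bone B\by},
\]
where $\bone={}^t(1,1,1)$ and we use that $B$ is nonnegative, so $\|B\by\|_1={}^t\bone B\by$. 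Since ${}^t\bone B$ is the vector of column sums of $B$, i.e.\ of row sums of $A^{(n)}$, we get ${}^t\bone B\by=\sum_j\|A^{(n)}_j\|_1y_j$. This is exactly the denominator in the claimed formula, so it remains to show that the Jacobian of $g$ equals $\det B/({}^t\bone B\by)^3$.

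For the Jacobian I would use a cone argument. For a Borel set $E\subset\Delta$ and a positive measurable function $s$ on $E$, set $C_s(E)=\{t\by:\by\in E,\ 0<t\le s(\by)\}$. Using polar-type coordinates $\bz=t\by$ with $t={}^t\bone\bz$, $\by\in\Delta$, the Lebesgue measure $dz$ equals $c\,t^{2}\,dt\,d\by$, where $c$ is a normalizing constant and $d\by$ is the surface measure on $\Delta$. Integrating in $t$ gives
\[
\mathrm{vol}(C_s(E))=\frac{c}{3}\int_E s(\by)^3\,d\by .
\]
Now apply the linear map $B$ to $C_1(E)$. Writing $B(t\by)=t\,({}^t\bone B\by)\,g(\by)$ shows that $B\,C_1(E)=C_{s}(g(E))$ with $s(g(\by))={}^t\bone B\by$. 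Since $\mathrm{vol}(B\,C_1(E))=|\det B|\,\mathrm{vol}(C_1(E))$, this yields
\[
\int_{g(E)}s(\bw)^3\,d\bw=\det B\cdot\mathrm{Leb}(E).
\]
Substituting $\bw=g(\by)$ turns the left-hand side into $\int_E({}^t\bone B\by)^3\,J_g(\by)\,d\by$. As $E$ is arbitrary, $J_g(\by)=\det B/({}^t\bone B\by)^3$, and $\det B=\det A^{(n)}$.

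The main thing to handle carefully is the change-of-variables step: the scaling factor $s$ lives on $g(E)$ while the integral must be pulled back to $E$. One must also check that the constant $c$ cancels, since the same polar decomposition is used on both sides, and that the measure on $\Delta$ is the same normalized Lebesgue measure throughout. A direct alternative would be to differentiate $g$ in the chart $(y_1,y_2)$ and compute the $2\times2$ determinant via the matrix determinant lemma, but I expect the cone argument to be cleaner.
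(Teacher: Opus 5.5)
Your argument is correct and complete. The paper does not prove this lemma itself; it quotes the formula from Schweiger and points to Veech for the case $\det=\pm1$. So your cone-volume computation is a self-contained alternative, not a reconstruction of an argument in the text.

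Compared with the usual chart computation (differentiating $\by\mapsto B\by/{}^t\bone B\by$ in coordinates $(y_1,y_2)$ and evaluating a rank-one perturbed $2\times2$ determinant, which you mention as a fallback), your route is coordinate-free:
\begin{itemize}
\item the exponent $3$ appears transparently as $d+1$, coming from the radial factor $t^2\,dt$;
\item the normalizing constant of the measure on $\Delta$ cancels automatically;
\item the identification of the denominator with $\sum_j\|A^{(n)}_j\|_1y_j$ via column sums of $B={}^tA^{(n)}$ is immediate;
\item the same argument gives exponent $d+1$ in any dimension.
\end{itemize}
Citing Schweiger buys only brevity and his more general framework.

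Three small points should be made explicit.
\begin{itemize}
\item The volume identity $\mathrm{vol}(B\,C_1(E))=|\det B|\,\mathrm{vol}(C_1(E))$ yields $|\det B|$, not $\det B$. For the Reverse algorithm this is harmless, since $\det M_1=\det M_2=\det M_3=1$ and $\det M_4=2$, so $\det B>0$. For matrices such as $M_{1,(213)}$ in Appendix~\ref{sec:appA}, which has determinant $-1$, the absolute value is needed.
\item You do not need fullness to define $g$ on all of $\Delta$. Since $B$ is nonnegative and regular, no row of $B$ vanishes, so $B\by\in\mathbb{R}^3_{>0}$ for every $\by\in\Delta$. This matters because the R\'enyi quotient in Section~\ref{sec:erg} takes the supremum and infimum over the whole of $\Delta$.
\item The function $s$ is well defined on $g(E)$ only because $g$ is injective ($B$ is invertible and $\Delta$ meets each ray once). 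Also, the identity for arbitrary Borel $E$ gives $J_g(\by)=|\det B|/({}^t\bone B\by)^3$ only almost everywhere; continuity of both sides upgrades this to every $\by\in\Delta$.
\end{itemize}
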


For matrices of determinant $\pm 1$ this lemma is already contained in  \cite[Proposition 5.2]{Veech78}.

\section{Ergodicity}\label{sec:erg}
The Reverse algorithm is a $2$-dimensional continued fraction algorithm in the sense of \Cref{def:cf}. Indeed, it corresponds to the cocycle $A$ given in \eqref{eq:coReverse}. In this section, we prove the ergodicity of the Reverse algorithm $(\Delta, f_\mathrm{R})$ with respect to the measure $\mu$ with density $h$ given in \eqref{eq:density}. 
We first introduce the following notation. Let $A^{(n)}_0, A^{(n)}_1, A^{(n)}_2$ be the row vectors of $A^{n}(\Delta(a_0 \cdots a_{n-1}))$. Choose $\pi_0(n),\pi_1(n),\pi_2(n)$ in a way that $\{\pi_0(n),\pi_1(n),\pi_2(n)\}=\{0,1,2\}$ and  $\|A^{(n)}_{\pi_0(n)}\|_1\le \|A^{(n)}_{\pi_1(n)}\|_1\le \|A^{(n)}_{\pi_2(n)}\|_1$ holds. To simplify notation, we write $A^{(n)}_{\pi_a}$ instead of $A^{(n)}_{\pi_a(n)}$.

When proving ergodicity of $(\Delta, f_\mathrm{R})$, the first idea is to establish the conditions of \Cref{Renyi's Theorem} for $(\Delta, f_\mathrm{R})$ directly. 
To prove condition~(d), let
\begin{align*}
C(a_0\cdots a_{n-1})=\frac{\displaystyle\sup_{\bx\in \Delta}\omega(a_0\cdots a_{n-1};\bx)}{\displaystyle\inf_{\bx\in \Delta}\omega(a_0 \cdots a_{n-1};\bx)}.
\end{align*}
We need to show that $C$ is bounded. Note that the supremum and the infimum in the definition of $C(a_0 \cdots a_{n-1})$ range over the whole set $\Delta$ and recall that $\|A^{(n)}_{\pi_0(n)}\|_1\le \|A^{(n)}_{\pi_1(n)}\|_1\le \|A^{(n)}_{\pi_2(n)}\|_1$. Thus, observing that $\det{A^{(n)}(\Delta(a_0\cdots a_{n-1}))}$ does not depend on $(x_0,x_1,x_2)$, \Cref{Sch_lemma25} yields
\begin{equation}\label{eq:Cquot}
C(a_0  \cdots a_{n-1})=
\frac{\displaystyle\sup_{(x_0,x_1,x_2)\in \Delta}{\big(\|A^{(n)}_0\|_1x_0+\|A^{(n)}_1\|_1x_1+\|A^{(n)}_2\|_1x_2\big)^{3}}}{\displaystyle\inf_{(x_0,x_1,x_2)\in \Delta}{\big(\|A^{(n)}_0\|_1x_0+\|A^{(n)}_1\|_1x_1+\|A^{(n)}_2\|_1x_2\big)^{3}}}
=
\bigg(\frac{\|A^{(n)}_{\pi_2}\|_1}{\|A^{(n)}_{\pi_0}\|_1}\bigg)^3.
\end{equation}
Since
\begin{align*}
^tM^n_{1}=
\begin{pmatrix}
1 & 0 & 0\\
n & 1 & 0\\
n & 0 & 1\\
\end{pmatrix},
\end{align*}
we have $C(1^n)=(n+1)^3$.
Therefore, there exists no R\'{e}nyi constant for all cylinders and, hence, we cannot prove (d) directly. Following the ideas of \cite{Schweiger:00}, we will first show that the conditions of \Cref{Renyi's Theorem} are satisfied for a so-called {\em jump transformation} (see, for instance, \cite[Definition~26 in Chapter~3]{Schweiger:00}). In particular, we consider the  jump transformation $f_{\mathrm{R}}^*$ of $f_{\mathrm{R}}$  on $\Delta(4)$, which avoids $a_n\in\{1,2,3\}$, {\it i.e.}, we set
$
\tau(\bx)=1+\inf\{n\ge0 \,:\, f_{\mathrm{R}}^n(\bx)\in\Delta(4)\}
$
and define
\begin{align*}
f_{\mathrm{R}}^{*}(\bx)=f_{\mathrm{R}}^{\tau(\bx)}(\bx).
\end{align*}
Because the Rauzy gasket has zero Lebesgue measure (see \cite[Section~6.3]{Arnoux_2013}), for Lebesgue almost every $\bx\in\Delta$, $a_n(\bx) =4$ holds for infinitely many choices of $n$. Thus, the jump transformation $f_{\mathrm{R}}^*$ is well-defined a.e.\ on $\Delta$ and satisfies conditions (a) and (b) of \Cref{Renyi's Theorem} by definition. To prove condition (c) for $f_{\mathrm{R}}^*$ on a subset of $\Delta$ of full measure, we require the following result. 

\begin{lemma}\label{lem:topconv}
The dynamical system $(\Delta,f_{\mathrm{R}}^*)$ is topological convergent almost everywhere.
\end{lemma}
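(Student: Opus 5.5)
We will use the observation after Definition~\ref{def:topConv}: topological convergence at $\bx$ is equivalent to the matrix sequence along $\bx$ admitting a generalized right eigenvector. To obtain such an eigenvector we apply Lemma~\ref{lem:3.5.5}, which requires a fixed positive matrix occurring infinitely often as a block product in the sequence.

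The cylinders of the jump transformation $f_{\mathrm{R}}^*$ are of the form $\Delta(i_1^{k_1}\cdots)$ with blocks $w_j=u_j4$, where $u_j\in\{1,2,3\}^*$. So the matrix sequence of $\bx$ under $f_{\mathrm{R}}$ is a refinement of the one under $f_{\mathrm{R}}^*$. Moreover, the $f_{\mathrm{R}}^*$-cylinders containing $\bx$ are exactly the $f_{\mathrm{R}}$-cylinders $\Delta(a_0\cdots a_{n-1})$ with $a_{n-1}=4$. These cylinders are nested, so it suffices to prove that their diameters tend to zero. This is implied by topological convergence of $f_{\mathrm{R}}$ at $\bx$, and hence by the existence of a generalized right eigenvector of $(^tM_{a_i})$, or equivalently of $(M_{a_i})$ after transposing appropriately; here one has to be careful about which product order describes the cylinder vertices.

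Next we exhibit a positive block. A direct computation shows that $M_4^2=\begin{pmatrix}2&1&1\\1&2&1\\1&1&2\end{pmatrix}$ is positive, so any occurrence of $44$ gives a positive product. Alternatively, $M_1M_2M_3$ is positive, as it is for Arnoux--Rauzy.

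It remains to show that for Lebesgue-a.e.\ $\bx$ the word $44$ occurs infinitely often in the coding $a_0a_1\cdots$. Here we use the invariant measure $\mu$, which is equivalent to Lebesgue measure because $h$ is positive. The cylinder $\Delta(44)$ is a nondegenerate triangle, so $\mu(\Delta(44))>0$. By the Poincar\'e recurrence theorem applied to $(\Delta,f_{\mathrm{R}},\mu)$, a.e.\ point of $\Delta(44)$ returns to $\Delta(44)$ infinitely often. The same holds for a.e.\ point of $f_{\mathrm{R}}^{-k}\Delta(44)$ for every $k$. To cover all of $\Delta$ we need a.e.\ point to visit $\Delta(44)$ at least once. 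Without ergodicity, which is not yet available, we argue as follows. Let $E$ be the set of points whose orbit never enters $\Delta(44)$. Then $E$ is forward invariant up to null sets, and $E\cap\Delta(4)\cap f_{\mathrm{R}}^{-1}\Delta(4)=\emptyset$. Since $f_{\mathrm{R}}$ maps $\Delta(4)$ linearly-projectively onto $\Delta$, the set $E\cap\Delta(4)$ pulls back from $f_{\mathrm{R}}(E)\subset E$, which avoids $\Delta(4)$. The orbit of a point of $E$ therefore visits $\Delta(4)$, after which it must avoid $\Delta(4)$ for one step. Iterating, the orbits in $E$ avoid the set of positive measure $\Delta(44)$ forever. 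By full branches and a density-point argument, $E$ must be null: if $\mathrm{Leb}(E)>0$, take a Lebesgue density point of $E$ with the property that the $f_{\mathrm{R}}^*$-cylinders shrink there. Note that a.e.\ orbit visits $\Delta(4)$ infinitely often, as stated before the lemma. Bounded distortion on those cylinders, which is the main obstacle, then gives a contradiction, since the image of a small cylinder is $\Delta$ and $E$ misses $\Delta(44)$. To avoid the circularity of needing shrinking cylinders first, we instead use only $\mu$: the set $\bigcup_k f_{\mathrm{R}}^{-k}\Delta(44)$ has full measure by a Kac/Halmos-type argument within the ergodic decomposition of $\mu$, because each ergodic component meets $\Delta(44)$. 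The obstacle is justifying that last claim. We would try proving it via the Rauzy gasket being null: points avoiding $\Delta(4)$ form a null set, and points visiting $\Delta(4)$ but never twice consecutively can be handled the same way using the analogous null set for the induced map. Once $44$ occurs infinitely often a.e., Lemma~\ref{lem:3.5.5} with $M=M_4^2$ gives the generalized eigenvector, and hence the claimed a.e.\ topological convergence.
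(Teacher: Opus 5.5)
\paragraph{Review.} Your reduction is sound. The $f_{\mathrm{R}}^*$-cylinders of $\bx$ are a subsequence of its nested $f_{\mathrm{R}}$-cylinders, so a generalized right eigenvector from \Cref{lem:3.5.5} suffices, and $M_4^2$ is indeed positive.

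The gap is the step you flag yourself: that Lebesgue-a.e.\ orbit enters $\Delta(44)$ at all. Poincar\'e recurrence only controls points already in $\Delta(44)$ (or in $f_{\mathrm{R}}^{-k}\Delta(44)$). None of your three attempts closes this:
\begin{itemize}
\item The density-point argument needs shrinking cylinders, which is what you are proving.
\item ``Every ergodic component of $\mu$ meets $\Delta(44)$'' is just a restatement of the claim.
\item The nullity of the Rauzy gasket only gives that the letter $4$ occurs infinitely often. The set of codings avoiding $44$ is a different set: it consists of the points whose $f_{\mathrm{R}}^*$-digits after the first never equal the single word $4$. Its nullity needs its own proof, so there is no ``analogous null set for the induced map'' available for free.
\end{itemize}
As written, the proof is therefore incomplete.

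The missing idea is that you do not need a fixed block common to a.e.\ orbit. \Cref{lem:3.5.5} only asks for \emph{some} positive matrix occurring infinitely often in the given sequence, and a point-dependent block suffices. The argument runs as follows:
\begin{itemize}
\item Apply Poincar\'e recurrence to each of the countably many cylinders $\Delta(a_0\cdots a_{n-1})$. Then a.e.\ $\bx$ returns infinitely often to every cylinder containing it, so every prefix of its coding recurs infinitely often.
\item Since $4$ occurs at least twice in a.e.\ coding, the prefix ending at the second $4$ recurs. Hence some block $4u4$ with $u\in\{1,2,3\}^*$ occurs infinitely often.
\item Let $N$ be the product of the $M_i$ along $u$. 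Each of $M_1,M_2,M_3$ dominates the identity entrywise, so $N\ge I$, and therefore $M_4NM_4\ge M_4^2>0$.
\end{itemize}
This is the paper's proof.

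If you want to keep the fixed block $44$, you can instead use the distortion bound $C<2^3$ from \Cref{an=4}, which does not require shrinking cylinders. It shows that every rank-$k$ cylinder of $f_{\mathrm{R}}^*$ sends a uniformly positive proportion of its Lebesgue measure into $\Delta(4)$ under $(f_{\mathrm{R}}^*)^k$. Consequently the set of points avoiding $44$ has geometrically decaying measure, hence measure zero. This argument is not in your proposal, however.
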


\begin{proof}
We first establish a.e.\ topological convergence of $(\Delta, f_{\mathrm{R}})$. By the Poincar\'e recurrence theorem, for almost all $\bx\in\Delta$, the sequence $(M_{a_n})_{n\ge 0}=(\,^t\!A(f_{\mathrm{R}}^{n}(\bx)))_{n\ge 0}$ of matrices is recurrent. Moreover, since the Lebesgue measure of the Rauzy gasket is zero, $M_{a_n}=M_4$ holds for infinitely many $n$ for a.e.\ $\bx$. Thus, by recurrence, there is $\ell\in\N$ and a block $(N_1,\ldots, N_{\ell-1})$ such that $(M_{a_n}, \ldots, M_{a_{n+\ell}})=(M_4,N_1,\ldots,N_{\ell-1},M_4)$ holds for infinitely many $n$. It is easy to check that $M_4N_1\cdots N_{\ell-1}M_4$ is a positive matrix, hence, $(M_{a_n})_{n\ge 0}$ is primitive. Thus, the conditions of \Cref{lem:3.5.5} hold a.e.\ and topological convergence a.e.\ of $(\Delta,f_{\mathrm{R}})$ follows from this proposition. Because $(\Delta,f_{\mathrm{R}}^*)$ is an acceleration of $(\Delta,f_{\mathrm{R}})$, it is {\em a fortiori} topologically convergent a.e.\ as well, and the lemma is proved.
\end{proof}

An alternative proof of Lemma~\ref{lem:topconv} 
could be done by using ideas from \cite{Messaoudi2009}. Indeed, for each word $w_1\cdots w_k$ ending with the letter $4$ and containing two other digits, one can show that the cylinder $\Delta(a_0 \cdots a_{n-1}w_1\cdots w_k)$ has at most $\frac{2}{3}$ times the diameter of $\Delta(a_0\cdots a_{n-1})$. Since our proof is much shorter, we refrain from giving the technical details.

To get condition (d) of \Cref{Renyi's Theorem} for $f_{\mathrm{R}}^*$, we start with an auxiliary lemma.

\begin{lemma}
\label{A0+A1>A2}
We have $\|A^{(n)}_{\pi_0}\|_1+\|A^{(n)}_{\pi_1}\|_1>\|A^{(n)}_{\pi_2}\|_1$.
\end{lemma}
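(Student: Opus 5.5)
The plan is induction on $n$, after restating the claim as a strict triangle inequality. For nonnegative reals $c_0,c_1,c_2$, the condition that the two smallest sum to more than the largest is equivalent to $c_i < c_j + c_k$ for every $\{i,j,k\}=\{0,1,2\}$. So the claim says that the three numbers $c^{(n)}_j := \|A^{(n)}_j\|_1$ satisfy all three strict triangle inequalities. This reformulation frees us from tracking the permutation $\pi(n)$.

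First I translate to columns. By the definition of the cocycle on cylinders,
\[
A^{(n)}(\Delta(a_0\cdots a_{n-1})) = {}^t(M_{a_0}\cdots M_{a_{n-1}}),
\]
so the rows of $A^{(n)}$ are the columns of $P_n := M_{a_0}\cdots M_{a_{n-1}}$. Since $P_{n+1}=P_nM_{a_n}$ and all entries are nonnegative, the row vector of column sums transforms as
\[
(c^{(n+1)}_0,c^{(n+1)}_1,c^{(n+1)}_2) = (c^{(n)}_0,c^{(n)}_1,c^{(n)}_2)\,M_{a_n}.
\]
The base case $n=0$ is $P_0=I$ with $c=(1,1,1)$, and $1<1+1$ holds. (For $n=1$ one can also check the four matrices directly.)

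For the inductive step, write $c=(c_0,c_1,c_2)$ for the current column sums; these are positive, since the matrices are regular with nonnegative entries. I then treat the four matrices in turn.

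\begin{itemize}
\item For $M_1$, the new vector is $(c_0,\,c_0+c_1,\,c_0+c_2)$. The inequality $(c_0+c_1)+(c_0+c_2)>c_0$ is trivial. The inequality $c_0+(c_0+c_1)>c_0+c_2$ reduces to $c_0+c_1>c_2$, which is the induction hypothesis, and the remaining inequality is symmetric to it.
\item The cases $M_2$ and $M_3$ follow in the same way, by permuting the indices.
\item For $M_4$, the new vector is $(c_1+c_2,\,c_0+c_2,\,c_0+c_1)$. Here, for example, $(c_1+c_2)+(c_0+c_2)>c_0+c_1$ holds because $c_2>0$, so this case does not even need the hypothesis.
\end{itemize}

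This closes the induction. There is no real obstacle. The only points needing care are the row/column bookkeeping caused by the transpose in $A$, and the observation that strictness is preserved: it is inherited from the hypothesis in the $M_1,M_2,M_3$ cases and comes from positivity of the $c_j$ in the $M_4$ case.
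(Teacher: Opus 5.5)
Your proof is correct and follows the paper's route: induction on $n$ through $P_{n+1}=P_nM_{a_n}$, with the $M_1$ case (and, by symmetry, $M_2$ and $M_3$) reducing to the induction hypothesis and the $M_4$ case needing only positivity of the row norms. Restating the claim as three symmetric strict triangle inequalities is a real tidying-up: the paper's displayed chain for $a_n=1$, which ends in $\|A^{(n)}_{\pi_0}\|_1+\|A^{(n)}_{\pi_2}\|_1=\|A^{(n+1)}_{\pi_2}\|_1$, is literally valid only when $A^{(n)}_0$ has minimal $1$-norm, whereas your version covers every ordering at once.
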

\begin{proof}
We prove this by induction. Since the induction start trivially holds, we continue with the induction step. By the symmetry properties of this algorithm, we may restrict ourselves to $a_{n}\in\{1,4\}$. For $a_{n}=1$, we start with 
\[
^t\!A^{n}(\Delta(a_0 \cdots a_{n-1},1))=M_{a_0}\cdots M_{a_{n-1}}M_{1}
=\big(A^{(n)}_0, A^{(n)}_0+A^{(n)}_1, A^{(n)}_0+A^{(n)}_2
\big).
\]
Since all occurring matrices are positive, the $1$-norm behaves nicely w.r.t.\ addition, and we gain
\begin{align*}
\|A^{(n+1)}_{\pi_0}\|_1+\|A^{(n+1)}_{\pi_1}\|_1
&=\|A^{(n)}_{0}\|_1+(\|A^{(n)}_{0}\|_1+\min\{\|A^{(n)}_{1}\|_1, ~\|A^{(n)}_{2}\|_1\})\\
&\ge 2\|A^{(n)}_{\pi_0}\|_1+\|A^{(n)}_{\pi_1}\|_1
>\|A^{(n)}_{\pi_0}\|_1+\|A^{(n)}_{\pi_2}\|_1
=\|A^{(n+1)}_{\pi_2}\|_1.
\end{align*}
Similarly, for $a_{n}=4$, we get from
\begin{align*}
M_{a_1}\cdots M_{a_{n-1}}M_{4}
=\big(A^{(n)}_1+A^{(n)}_2, A^{(n)}_0+A^{(n)}_2, A^{(n)}_0+A^{(n)}_1\big)
\end{align*}
that
\begin{align*}
\|A^{(n+1)}_{\pi_0}\|_1+\|A^{(n+1)}_{\pi_1}\|_1&=\|A^{(n)}_{\pi_0}\|_1+\|A^{(n)}_{\pi_1}\|_1+\|A^{(n)}_{\pi_0}\|_1+\|A^{(n)}_{\pi_2}\|_1\\
&>\|A^{(n)}_{\pi_1}\|_1+\|A^{(n)}_{\pi_2}\|_1=\|A^{(n+1)}_{\pi_2}\|_1. \qedhere
\end{align*}
\end{proof}

The next lemma provides a R\'{e}nyi constant for cylinders $\Delta(a_0\cdots a_{n})$ with $a_n=4$. 

\begin{lemma}\label{an=4}
Renyi's condition is satisfied for $\omega(a_0 \cdots a_{n-1}a_n;x)$ if $a_{n}=4$.
In particular, we have
$C(a_0 \cdots a_{n-1}a_n)<2^3$.
\end{lemma}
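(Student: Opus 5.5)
By \eqref{eq:Cquot}, the quantity to control is
\[
C(a_0\cdots a_{n-1}a_n)=\Big(\|A^{(n+1)}_{\pi_2}\|_1\big/\|A^{(n+1)}_{\pi_0}\|_1\Big)^3 .
\]
It therefore suffices to show that after a step with $a_n=4$ the largest row norm is less than twice the smallest one.

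The plan is to use the explicit form of the last multiplication by $M_4$, exactly as in the proof of Lemma~\ref{A0+A1>A2}. Writing $A^{(n)}_0,A^{(n)}_1,A^{(n)}_2$ for the rows of $A^{(n)}(\Delta(a_0\cdots a_{n-1}))$, the rows of $A^{(n+1)}$ are $A^{(n)}_1+A^{(n)}_2$, $A^{(n)}_0+A^{(n)}_2$ and $A^{(n)}_0+A^{(n)}_1$. Since all entries are nonnegative, the $1$-norm is additive on these sums. Abbreviating $s_a=\|A^{(n)}_{\pi_a}\|_1$ with $s_0\le s_1\le s_2$, the new row norms are $s_0+s_1\le s_0+s_2\le s_1+s_2$. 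Hence
\[
\|A^{(n+1)}_{\pi_0}\|_1=s_0+s_1,\qquad \|A^{(n+1)}_{\pi_2}\|_1=s_1+s_2 .
\]

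Next I invoke Lemma~\ref{A0+A1>A2} for the matrix $A^{(n)}$, which gives $s_2<s_0+s_1$; for $n=0$ this is trivial since the identity has all row norms equal to $1$. Consequently
\[
s_1+s_2< s_1+s_0+s_1\le 2(s_0+s_1),
\]
where the last inequality only uses $s_0\ge 0$. This gives $\|A^{(n+1)}_{\pi_2}\|_1/\|A^{(n+1)}_{\pi_0}\|_1<2$, and cubing yields $C(a_0\cdots a_{n-1}a_n)<2^3$. In particular R\'enyi's condition holds uniformly for all such cylinders.

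There is no real obstacle here. The only points requiring care are, first, checking that the ordering of the new row norms is as claimed, and second, checking that Lemma~\ref{A0+A1>A2} applies to an arbitrary admissible prefix $a_0\cdots a_{n-1}$ (including $n=0$). Its inductive proof covers all digits, with the digits $2,3$ handled by symmetry, so this is fine. One should also note that the supremum and infimum in \eqref{eq:Cquot} are taken over the open simplex, so the bound is attained only as a limit; this does not affect the strict inequality, because that comes from Lemma~\ref{A0+A1>A2}.
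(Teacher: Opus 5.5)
Your proof is correct and is essentially the paper's argument: the explicit form of right-multiplication by $M_4$, additivity of the $1$-norm on nonnegative rows, and Lemma~\ref{A0+A1>A2} applied to $A^{(n)}$. The only cosmetic difference is in the last inequality: you bound the numerator $s_1+s_2$ from above by $2(s_0+s_1)$, whereas the paper bounds the numerator by $2s_2$ and the denominator $s_0+s_1$ from below by $s_2$.
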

\begin{proof}
Let $M_{a_0}\cdots M_{a_{n-1}}=
\big(A_0^{(n)}, A_1^{(n)}, A_2^{(n)}\big)$. Then, 
\begin{align*}
M_{a_1}\cdots M_{a_{n-1}}M_{4}
=\big(A_1^{(n)}+A_2^{(n)}, A_0^{(n)}+A_2^{(n)}, A_0^{(n)}+A_1^{(n)}\big).
\end{align*}
Thus, by  \eqref{eq:Cquot} and because $\|A^{(n)}_{\pi_0}\|_1\le \|A^{(n)}_{\pi_1}\|_1\le \|A^{(n)}_{\pi_2}\|_1$, we gain 
\begin{equation}\label{eq:newCC}
C(a_0 \cdots a_{n-1}a_n)
=\left(\frac{\|A_{\pi_1}^{(n)}+A_{\pi_2}^{(n)}\|_1}{\|A_{\pi_0}^{(n)}+A_{\pi_1}^{(n)}\|_1}\right)^3
=\left(\frac{\|A_{\pi_1}^{(n)}\|_1+\|A_{\pi_2}^{(n)}\|_1}{\|A_{\pi_0}^{(n)}\|_1+\|A_{\pi_1}^{(n)}\|_1}\right)^3.
\end{equation}
Since we have $\|A^{(n)}_{\pi_0}\|_1+\|A^{(n)}_{\pi_1}\|_1>\|A^{(n)}_{\pi_2}\|_1$ by \Cref{A0+A1>A2}, we conclude from \eqref{eq:newCC} that
\[
C(a_0 \cdots a_{n-1}a_n)<\left(\frac{\|A_{\pi_2}^{(n)}\|_1+\|A_{\pi_2}^{(n)}\|_1}{\|A_{\pi_2}^{(n)}\|_1}\right)^3
=2^3. \qquad \qedhere
\]
\end{proof}

\begin{proposition}\label{4infinitely}
The jump transformation $f_{\mathrm{R}}^*$ satisfies the R\'enyi condition.
\end{proposition}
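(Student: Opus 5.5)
The approach is to reduce the statement to Lemma~\ref{an=4}. First I describe the cylinders of the jump transformation. A cylinder of rank $k$ for $f_{\mathrm{R}}^*$ is a cylinder $\Delta(a_0\cdots a_{n})$ of $f_{\mathrm{R}}$ whose digit string is a concatenation of $k$ blocks. Each block has the form $b_1\cdots b_m 4$ with $b_j\in\{1,2,3\}$ and $m\ge0$. In particular, every cylinder of $f_{\mathrm{R}}^*$ is an $f_{\mathrm{R}}$-cylinder whose \emph{last} digit is $a_n=4$.

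Next I compare the Jacobians. The inverse branch of $(f_{\mathrm{R}}^*)^k$ onto such a cylinder coincides with the inverse branch of $f_{\mathrm{R}}^{n+1}$ onto $\Delta(a_0\cdots a_n)$, since both are the same composition of inverse branches of $f_{\mathrm{R}}$. Hence $\omega^*(\text{blocks};\bx)=\omega(a_0\cdots a_n;\bx)$ for all $\bx\in\Delta$.

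The argument uses fullness in one place. Because $f_{\mathrm{R}}(\Delta(i))=\Delta$ for every $i$, each inverse branch is defined on all of $\Delta$. So the supremum and infimum in R\'enyi's condition range over the same set $\Delta$ as in \eqref{eq:Cquot}.

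Lemma~\ref{an=4} then gives $C(a_0\cdots a_{n-1}4)<2^3$ uniformly in the word. So $8$ is a R\'enyi constant for $f_{\mathrm{R}}^*$. Note that Lemma~\ref{an=4} relies on Lemma~\ref{A0+A1>A2}, which holds for an arbitrary prefix $a_0\cdots a_{n-1}$; this is why the bound is uniform.

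The only delicate point is the bookkeeping that identifies the jump-transformation cylinders and their inverse branches with $f_{\mathrm{R}}$-cylinders ending in $4$. One must also check that this identification respects the a.e.\ definition of $f_{\mathrm{R}}^*$. Sets of measure zero, such as the Rauzy gasket, do not affect the Jacobian bound. No genuine obstacle is expected.
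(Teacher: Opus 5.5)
Your proposal is correct and follows the paper's route. The paper simply states that the proposition is an immediate consequence of Lemma~\ref{an=4}. Your identification of the cylinders and inverse branches of $f_{\mathrm{R}}^*$ with $f_{\mathrm{R}}$-cylinders ending in the digit $4$ is exactly the bookkeeping the paper leaves implicit, and it yields the same uniform R\'enyi constant $2^3$.
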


\begin{proof}
This is an immediate consequence of \Cref{an=4}.
\end{proof}

We are now ready to prove our ergodicity result.

\begin{proof}[Proof of $\Cref{ergodicity}$]
We first prove ergodicity of $(\Delta, f_{\mathrm{R}}^*)$.  To this end, we check the conditions of \Cref{Renyi's Theorem}. By definition, $(\Delta, f_{\mathrm{R}}^*)$ satisfies conditions (a) and (b). It follows from \Cref{lem:topconv} that the dynamical system $(\Delta, f_{\mathrm{R}}^*)$ is topological convergent a.e. Thus, the cylinders generate the $\sigma$-algebra of Borel sets on a subset $\tilde\Delta$ of $\Delta$ of full measure, and, hence, by \Cref{TC}, condition~(c) holds on $\tilde \Delta$. Finally, by \Cref{4infinitely}, the system $(\Delta, f_{\mathrm{R}}^*)$ satisfies Renyi's condition and, hence, condition (d). Thus, by \Cref{Renyi's Theorem}, $f_{\mathrm{R}}^*$ is ergodic (on $\tilde\Delta$ and, hence, on $\Delta$) and admits a finite invariant measure that is equivalent to the Lebesgue measure. Since ergodicity of $f_{\mathrm{R}}^*$ implies ergodicity of $f_{\mathrm{R}}$ with respect to $\mu$ by {\cite[Theorem~18.2.3]{Schweiger:95}}, observe the unicity of the measure up to scalar multiples stated in \cite[Theorem~6 in Chapter~3]{Schweiger:00}, the proof is finished.
\end{proof}

\section{The second Lyapunov exponent}\label{sec:lyap}
This section is devoted to the proof of Theorem~\ref{second Lyapunov negative}. Let $A$ be the cocycle of an ergodic $2$-dimensional continued fraction algorithm $(\Delta, f)$. As mentioned in Section~\ref{sec:MUl}, the Lyapunov exponents $\lambda_1(A)\ge \lambda_2(A)\ge \lambda_3(A)$ are related to the convergence behavior of the  algorithm $(\Delta, f)$. In particular, under mild conditions, $\lambda_2(A)<0$ implies exponential convergence of the algorithm. Since it is easier to estimate the largest Lyapunov exponent of a cocycle rather than estimating the second largest one, in a first step we will define a cocycle $D$ whose largest Lyapunov exponent $\lambda_1(D)$ contains information on the quality of the approximations of the algorithm. 
The idea for this cocycle goes back to \cite{Lagarias:93}, however, since our algorithm is unsorted and its projectivized version is normalized by the $1$-norm, the details are different in our situation. 

Let $(\Delta, f)$ be a $2$-dimensional continued fraction algorithm with cocycle $A$. For $n\in \N$, let the entries of the cocycle matrix $A^{(n)}(\bx)$ be given as in \eqref{eq:Acomponents} and recall that $p_{i}^{(n)}=p_{i0}^{(n)}+p_{i1}^{(n)}+p_{i2}^{(n)}$ for $i\in\{0,1,2\}$.  For $\bx=\,^t(x_0,x_1,x_2)\in\Delta$ define the matrices
\begin{align*}
\Pi&=\begin{pmatrix}
-1 & 1 & 0\\
-1 & 0 & 1\\
\end{pmatrix}, \quad 
H(\bx)=
\begin{pmatrix}
-{x_1} & -{x_2}\\
1-{x_1} & -{x_2}\\
-{x_1} & 1-{x_2}
\end{pmatrix},
\end{align*}
and
\begin{align*}
D^{(n)}(\bx)=\Pi A^{(n)}(\bx)H(\bx)
=
\begin{pmatrix}
p_{11}^{(n)}-p_{01}^{(n)}-(p_{1}^{(n)}-p_{0}^{(n)})x_1 &
p_{12}^{(n)}-p_{02}^{(n)}-(p_{1}^{(n)}-p_{0}^{(n)})x_2\\
p_{21}^{(n)}-p_{01}^{(n)}-(p_{2}^{(n)}-p_{0}^{(n)})x_1 & 
p_{22}^{(n)}-p_{02}^{(n)}-(p_{2}^{(n)}-p_{0}^{(n)})x_2
\end{pmatrix}.
\end{align*}
It turns out that $D^{(n)}(\bx)$ has the cocycle property. To show this, we first claim that 
\begin{equation}\label{eq:HPweg}
H(f^n \bx)\, \Pi\, A^{(n)}(\bx) H(\bx) = A^{(n)}(\bx) H(\bx).
\end{equation}
To prove this, let $I_{3\times 3}$ be the $3\times 3$ identity matrix, let $\mathbf{0}_{3\times 3}$ be the $3\times 3$ zero matrix, and let $\mathbf{0}_{3}$ be the zero vector in $\mathbb{R}^3$. Observe that $I_{3\times3} - H(f^n \mathbf{x})\, \Pi=\,^t(f^n(\bx),f^n(\bx), f^n(\bx))$ and $^t\!f^n(\bx) A^{(n)}(\bx) H(\bx) = \,^t\bx\, H(\mathbf{x}) = \mathbf{0}_3$, and, hence,
\[
(I_{3\times 3} - H(f^n \mathbf{x})\, \Pi)A^{(n)}(\bx) H(\bx)=\,^t(f^n(\bx),f^n(\bx), f^n(\bx))A^{(n)}(\bx) H(\bx)=
\,^t(\bx,\bx,\bx)H(\bx) = \mathbf{0}_{3\times 3}, 
\]
which implies~\eqref{eq:HPweg}. Using~\eqref{eq:HPweg} we obtain that
\[
D^{(m)}(f^n\bx) D^{(n)}(\bx) = \Pi\, A^{(m)}(f^n\bx) A^{(n)}(\bx) H(\bx)=\Pi \, A^{(m+n)}(\bx) H(\bx) = D^{(m+n)}(\bx),
\]
thus $D$ is a cocycle of $(\Delta,f)$. 

In the following result, for two sequences $(a_n)_{n\in\N}$ and $(b_n)_{n\in\N}$ the notation $a_n \ll b_n$ indicates that there is $C>0$ and $N\in\N$ such that $a_n \le Cb_n$ holds for every $n\ge N$. Recall that $\delta_1(M)\ge \delta_2(M)\ge \delta_3(M)$ are the singular values of a $3\times 3$ matrix $M$.

\begin{lemma}\label{lem.singular value}
Let $(\Delta, f)$ be an ergodic $2$-dimensional continued fraction algorithm with cocycle~$A$. 
     Then $\delta_2(A^{(n)}(\bx))\ll \delta_1(D^{(n)}(\bx))$ holds uniformly for all $\bx\in\Delta$, and $\lambda_2(A)\le \lambda_1(D)$.
\end{lemma}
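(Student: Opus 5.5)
We compare singular values through exterior powers, using the explicit relation between $D^{(n)}(\bx)$ and $A^{(n)}(\bx)$ restricted to the plane $\bx^\perp$. The plan has three steps.

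\emph{Step 1: write $D^{(n)}$ as a restriction.} The columns of $H(\bx)$ are $\mathbf{e}_2-\bx$ and $\mathbf{e}_3-\bx$, so they span the plane $W_\bx=\{\bv : {}^t\bx\,\bv=0\}$. Since $H(\bx)$ has rank $2$ with bounded entries, and its smallest singular value is bounded below uniformly in $\bx\in\Delta$ (compactness of $\overline\Delta$), $H(\bx)$ is a uniformly bi-Lipschitz map from $\mathbb{R}^2$ onto $W_\bx$. Next, ${}^tf^n(\bx)A^{(n)}(\bx)H(\bx)=0$, as observed in the derivation of \eqref{eq:HPweg}, so $A^{(n)}(\bx)$ maps $W_\bx$ into $W_{f^n\bx}$. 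Finally, $\Pi$ restricted to $W_{\by}$ is injective for every $\by\in\Delta$: its kernel is spanned by $\bone$, and ${}^t\by\bone=1\neq0$. Moreover $\Pi|_{W_\by}$ is uniformly bi-Lipschitz, because the angle between $\bone$ and $W_\by$ is bounded below uniformly, again by compactness. Combining these, $\delta_1(D^{(n)}(\bx))\asymp \|A^{(n)}(\bx)|_{W_\bx}\|$ with constants independent of $n$ and $\bx$.

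\emph{Step 2: bound $\delta_2(A^{(n)})$ by the restricted norm.} By the min-max characterization, $\delta_2(M)=\min_{\dim V=2}\max_{\bv\in V,\|\bv\|=1}\|M\bv\|$. Taking $V=W_\bx$ gives $\delta_2(A^{(n)}(\bx))\le \|A^{(n)}(\bx)|_{W_\bx}\|$. Together with Step 1 this yields $\delta_2(A^{(n)}(\bx))\ll\delta_1(D^{(n)}(\bx))$ uniformly. Taking $\frac1n\log$ and letting $n\to\infty$ then gives $\lambda_2(A)\le\lambda_1(D)$ for a.e.\ $\bx$. Both limits exist a.e.\ by Oseledets, which requires integrability of $\log^+\|A\|$ and $\log^+\|D\|$; for the Reverse algorithm this is automatic since the cocycle takes only finitely many values. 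By ergodicity the exponents are constants.

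\emph{Main obstacle.} The delicate point is uniformity in $\bx$, in particular near the boundary of $\Delta$, where the density of $\mu$ blows up. This is handled by Step 1 alone: all constants depend only on $\bx\in\overline\Delta$ through continuous nondegenerate quantities, namely the smallest singular value of $H(\bx)$ and the angle between $\bone$ and $W_{f^n\bx}$. These are bounded below on the closed simplex, since ${}^t\by\bone=1$ there, so they never degenerate. I would state these two compactness bounds explicitly, and the rest is the routine chain of inequalities above.
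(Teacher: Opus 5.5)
Your argument is correct and is essentially the paper's: you factor $D^{(n)}(\bx)=\Pi A^{(n)}(\bx)H(\bx)$ through $\bx^\perp\to(f^n\bx)^\perp$ and control $H(\bx)$ and $\Pi$ on $\by^\perp$ uniformly. Where the paper proves the explicit bound $\|\Pi\bw\|_\infty\ge\frac13$, you use compactness, which is fine since $\langle\by,\bone\rangle=1$; your min-max step $\delta_2(A^{(n)}(\bx))\le\|A^{(n)}(\bx)|_{\bx^\perp}\|$ is exactly what the paper's ellipse semi-axis argument encodes. One slip to fix: the columns of $H(\bx)$ are $\mathbf{e}_2-x_1\bone$ and $\mathbf{e}_3-x_2\bone$, not $\mathbf{e}_2-\bx$ and $\mathbf{e}_3-\bx$, which in general are not in $\bx^\perp$ and degenerate at vertices of $\overline\Delta$; with the correct columns one has $\Pi H(\bx)=I_2$, which gives your uniform lower bound on $H(\bx)$ directly.
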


The reverse inequalities can also be shown, but are not needed for our purposes. 

\begin{proof}
Let $\|\cdot\|$ be an arbitrary norm. Let $\bx\in\Delta$ and recall that $D^{(n)}(\bx)=\Pi A^{(n)}(\bx)H(\bx)$. As in the proof of \cite[Lemma~4.2]{BST21}, we can establish the lemma by mapping the unit circle $\mathbb{S}^{1}$ into $\mathbb{R}^2$ step by step via the matrices $H(\bx)$, $A^n(\bx)$, and $\Pi$, and keep track the lengths of the semi-axes of the occurring ellipses. First, setting $\bx=\,^t(x_0,x_1,x_2)$, we see that for $^t(v_1,v_2)\in \mathbb{S}^1$ we get
\begin{align*}
    H(\bx) \cdot \begin{pmatrix}
        v_1 \\ 
        v_2
    \end{pmatrix}
    =
    \begin{pmatrix}
        -x_1v_1-x_2v_2 \\ 
        v_1-x_1v_1-x_2v_2 \\ 
        v_2-x_1v_1-x_2v_2 \\ 
    \end{pmatrix}.
\end{align*}
Thus $E_1= H(\bx)\mathbb{S}^1$ is an ellipse whose semi-axes  $\ba_1^{(1)},\ba_2^{(1)}$ satisfy $1 \ll \|\ba_1^{(1)}\|$ and $1 \ll\|\ba_2^{(1)}\|$, where the implied constants can be chosen independently of $\bx$. Because $\bx\in\Delta$ we see that $E_1$ is contained in the orthogonal complement $\bx^\bot$ of $\bx$. By definition, $A^{(n)}(\bx)$ maps $E_1$ to an ellipse $E_2=A^{(n)}(\bx)H(\bx)\mathbb{S}^1 \subset \by^\bot$, where $\by = f^n(\bx) \in \Delta$. Moreover, by the definition of the singular value of a matrix, this entails that the semi-axes $\ba_1^{(2)},\ba_2^{(2)}$ of $E_2$ satisfy
\begin{equation}\label{eq:E2E3}
\delta_{2}(A^{(n)}(\bx)) \ll \|\ba_1^{(2)}\|, \quad\text{and}\quad \delta_{3}(A^{(n)}(\bx)) \ll \|\ba_2^{(2)}\|,
\end{equation}
where the implied constants can again be chosen independently of $\bx$.
Thus it remains to show that $\|\Pi \bw\|_\infty\ge \frac13$ for each $\bw=\,^t(w_0,w_1,w_2) \in \by^\bot$ with $\|\bw\|_\infty=1$. To see this, note that 
\begin{align*}
    \Pi \cdot \begin{pmatrix}
        w_0 \\ 
        w_1 \\
        w_2
    \end{pmatrix}
    =
    \begin{pmatrix}
        w_1-w_0 \\ 
        w_2-w_0
    \end{pmatrix}.
\end{align*}
Assume on the contrary that $\|\Pi \bw\|_\infty<\frac13$ for some $\bw=\,^t(w_0,w_1,w_2) \in \by^\bot$ with $\|\bw\|_\infty=1$.
Because $\|\bw\|_\infty=1$ we have $|w_i| = 1$ for at least one $i\in\{0,1,2\}$. Moreover, $\|\Pi \bw\|_\infty<\frac13$ implies that $|w_j-w_k|<\frac23$ for all $j,k\in\{0,1,2\}$. Therefore, because $\by=\,^t(y_0,y_1,y_2)\in \Delta$,
\[ 
|y_0w_0+y_1w_1+y_2w_2| \ge |y_0+y_1+y_2|-\frac23|y_0+y_1+y_2|=1-\frac23=\frac13.
\]
This contradicts the fact that $\bw\in \by^\bot$ and, hence, $\|\Pi \bw\|_\infty\ge \frac13$ for each $\bw \in \by^\bot$ with $\|\bw\|_\infty=1$. Together with \eqref{eq:E2E3} this implies that the ellipse $E_3=\Pi A^{(n)}(\bx)H(\bx)\mathbb{S}^1 = D^{(n)}(\bx)\mathbb{S}^1$ has semi-axes  $\ba_1^{(3)},\ba_2^{(3)}$ satisfying
$\delta_{2}(A^{(n)}(\bx)) \ll \|\ba_1^{(3)}\|$ and $\delta_{3}(A^{(n)}(\bx)) \ll \|\ba_2^{(3)}\|$,
where the implied constants can again be chosen independently of $\bx$. This proves the statement on the singular values. By ergodicity and by the definition of Lyapunov exponents, this implies that $\lambda_2(A)\le \lambda_1(D)$.
\end{proof}

Let $(\Delta, f)$ be an ergodic $2$-dimensional continued fraction algorithm. We relate exponential convergence of $(\Delta, f)$ to the cocycle $D$ as follows.  Let $\bx=\,^t(x_0,x_1,x_2)\in\Delta$ be arbitrary. By the mapping properties of the matrix $\Pi$ on $\by^\bot$ for $\by\in\Delta$ that we discussed in the proof of Lemma~\ref{lem.singular value}, we know that 
\begin{align*}
&\max\big\{\big\|\big({p_{i1}^{(n)}}, {p_{i2}^{(n)}}\big)-{p_{i}^{(n)}}({x_1},{x_2})\big\|_\infty\;:\; i\in\{0,1,2\}\big\}
\qquad\hbox{and} \\
&\max\big\{\big\|\big({p_{i1}^{(n)}-p_{01}^{(n)}}, {p_{i2}^{(n)}-p_{02}^{(n)}}\big)-(p_{i}^{(n)}-p_0^{(n)})({x_1},{x_2})\big\|_\infty\;:\; i\in\{1,2\}\big\}
\end{align*}
are at most a factor $\frac13$ apart from each other. Thus, the criterion for exponential convergence contained in \eqref{eq:Dioexp} is equivalent to the fact that 
\begin{align}\label{eq:Dioexp2}
\big\|\big({p_{i1}^{(n)}-p_{01}^{(n)}}, {p_{i2}^{(n)}-p_{02}^{(n)}}\big)-(p_{i}^{(n)}-p_0^{(n)})({x_1},{x_2})\big\|_\infty < \big(p_{i}^{(n)}\big)^{1-\alpha}
\end{align}
holds for each $i\in\{1,2\}$ when $n$ is large enough. Therefore, the uniform approximation exponent $\eta^*$ defined in \eqref{eq:bestapproxexp} can also be defined in terms of the entries of $D^{(n)}(\bx)$. 

According to the following proposition, the uniform approximation exponent of the Reverse algorithm can be expressed in terms of its Lyapunov exponents (see for instance \cite[Theorem~4.1]{Lagarias:93}, whose conditions are satisfied for the Reverse algorithm according to the results that we established in Section~\ref{sec:erg}).

\begin{proposition}\label{prop:LagariasLyapunov}
Let $\lambda_1(A)\ge \lambda_2(A)\ge\lambda_3(A)$ be the Lyapunov exponents of the cocycle $A$ of the Reverse algorithm $(\Delta,f_\mathrm{R})$. Then the uniform approximation exponent $\eta^*$ of $(\Delta,f_\mathrm{R})$ satisfies
\[
\eta^*(\bx) = 1 - \frac{\lambda_2(A)}{\lambda_1(A)}
\]
for almost all $\bx\in\Delta$. In particular, if $\lambda_2(A)<0$, then the Reverse algorithm is a.e.\ exponentially convergent.
\end{proposition}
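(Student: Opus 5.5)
The statement is an instance of Lagarias' theorem \cite[Theorem~4.1]{Lagarias:93}. My plan is therefore not to reprove the Diophantine estimate from scratch. Instead I will check that the hypotheses of that theorem hold for $(\Delta,f_\mathrm{R},\mu)$, and then translate its conclusion, which is phrased via the entries of $A^{(n)}(\bx)$, into our unsorted, $1$-norm-normalized setting. The translation uses the equivalence of \eqref{eq:Dioexp} and \eqref{eq:Dioexp2} established just above.

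\textbf{Checking the hypotheses.} The hypotheses I expect to need are:
\begin{enumerate}[label=(H\arabic*)]
\item ergodicity with respect to an absolutely continuous invariant probability measure;
\item integrability of $\log\|A(\bx)\|$;
\item a.e.\ eventual positivity of the cocycle products.
\end{enumerate}
For (H1), ergodicity is \Cref{ergodicity}, and $\mu$ is absolutely continuous by \eqref{eq:density}. For (H2), $A$ takes only the four values $^t\!M_i$, so $\log\|A\|$ and $\log\|A^{-1}\|$ are bounded and hence $\mu$-integrable. The Oseledets theorem then gives the exponents $\lambda_1(A)\ge\lambda_2(A)\ge\lambda_3(A)$ for a.e.\ $\bx$. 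For (H3), the proof of \Cref{lem:topconv} shows that, for a.e.\ $\bx$, some block $M_4N_1\cdots N_{\ell-1}M_4$ occurs infinitely often and is positive. In particular $A^{(n)}(\bx)$ is eventually a positive matrix. Its row norms $p_i^{(n)}$ are then mutually comparable up to a bounded factor along these times, and they grow like $e^{n\lambda_1(A)}$ with $\lambda_1(A)>0$; the positivity of $\lambda_1(A)$ follows from the fact that the positive block recurs with positive frequency.

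\textbf{Deriving the formula.} For the upper bound I would use \Cref{lem.singular value}: the error vectors in \eqref{eq:Dioexp2} are the entries of $D^{(n)}(\bx)$, hence are bounded by $\delta_1(D^{(n)})$. Comparing $\delta_1(D^{(n)})$ with $\delta_2(A^{(n)})\approx e^{n\lambda_2}$ gives the bound $\ll e^{n(\lambda_2+\varepsilon)}=(p_i^{(n)})^{\lambda_2/\lambda_1+\varepsilon'}$. Writing the right-hand side as $(p_i^{(n)})^{1-\alpha}$ with $\alpha<1-\lambda_2/\lambda_1$ yields $\eta^*\ge 1-\lambda_2/\lambda_1$. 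For the reverse inequality I would invoke Oseledets' theorem. It shows that for a.e.\ $\bx$ the component of the error along the second Oseledets direction really grows like $e^{n(\lambda_2-\varepsilon)}$ infinitely often, so no larger $\alpha$ works. Here I would cite Lagarias rather than redo the argument.

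\textbf{Main obstacle.} The delicate point is that Lagarias works with a sorted, $\infty$-norm setting and with unimodular matrices. Here $\det M_4=2$, so $|\det A^{(n)}|$ grows exponentially and $\lambda_1+\lambda_2+\lambda_3=\tfrac{\log 2}{1}\cdot\mu(\Delta(4))\neq0$. I would check that his proof uses unimodularity only to control $\lambda_3$, and not in the relation between $\delta_2(A^{(n)})$ and the approximation error. The relation is in fact handled by \Cref{lem.singular value} and the $\Pi$-argument, which are insensitive to the determinant.

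Finally, the last sentence of the statement follows immediately: if $\lambda_2(A)<0$, then $\eta^*>1$ a.e., so some $\alpha>1$ satisfies \eqref{eq:Dioexp}, which is exactly exponential convergence.
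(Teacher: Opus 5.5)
Your overall strategy is the same as the paper's. The paper gives no independent argument: it cites \cite[Theorem~4.1]{Lagarias:93} and says its hypotheses follow from Section~\ref{sec:erg}, namely ergodicity with respect to $\mu$ and the recurrent positive block $M_4N_1\cdots N_{\ell-1}M_4$ from the proof of \Cref{lem:topconv}. Your (H1)--(H3) are exactly these inputs. Your caveat about $\det M_4=2$ is a point the paper passes over without comment.

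However, the paragraph where you derive part of the formula yourself has the two directions swapped.

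\Cref{lem.singular value} only gives $\delta_2(A^{(n)}(\bx))\ll\delta_1(D^{(n)}(\bx))$. That is a \emph{lower} bound on the approximation errors, which are the entries of $D^{(n)}(\bx)$. It therefore yields $\eta^*\le 1-\lambda_2(A)/\lambda_1(A)$, which is the half you planned to obtain from Oseledets.

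The half you attribute to the lemma is $\eta^*\ge 1-\lambda_2(A)/\lambda_1(A)$. It needs the upper bound $\delta_1(D^{(n)}(\bx))\ll e^{n(\lambda_2(A)+\varepsilon)}$, i.e.\ the reverse inequality $\lambda_1(D)\le\lambda_2(A)$. The paper explicitly says it does not prove this ("The reverse inequalities can also be shown, but are not needed for our purposes"). Proving it requires showing that $\bx$ is exponentially close, at rate about $e^{-n(\lambda_1(A)-\lambda_2(A))}$, to the top right singular vector of $A^{(n)}(\bx)$. Then on $\bx^\perp$ the matrix $A^{(n)}(\bx)$ acts essentially only through its two smaller singular values. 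This Oseledets-type alignment is the nontrivial content you must take from Lagarias.

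This matters because the ``in particular'' clause (exponential convergence from $\lambda_2(A)<0$) rests entirely on that half, so your self-contained step fails exactly where it is needed. The fix is to take both inequalities from Lagarias' theorem, as the paper does, or to supply the alignment argument yourself.

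If you only want what Theorem~\ref{second Lyapunov negative} actually uses, there is a shortcut. The errors are the entries of $D^{(n)}(\bx)$, so the computed bound $\lambda_1(D)<0$, together with $\frac1n\log p_i^{(n)}\to\lambda_1(A)>0$, gives exponential convergence directly. This bypasses $\lambda_2(A)$ altogether, but it does not prove the proposition as stated.
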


In order to prove Theorem~\ref{second Lyapunov negative}, we have to establish a negative upper bound for $\lambda_2(A)$ for the Reverse algorithm. By Lemma~\ref{lem.singular value}, it suffices to provide such a bound for the first Lyapunov exponent $\lambda_1(D)$ of the cocycle $D$. Since the 1-norm is submultiplicative, the cocycle property of $D$ implies that
\begin{align}\label{subadditivity relation}
\log{\|D^{(n+m)}(\bx)\|_1}\le\log{\|D^{(m)}(f^{n}\bx)\|_1}+\log{\|D^{(n)}(\bx)\|_1}.
\end{align}
This subadditivity property of $D$ allows us to apply the following classical result.

\begin{proposition}[Kingman's Subadditive Ergodic Theorem]\label{prop:King}
Let $T$ be a measure-preserving transformation on the probability space $(\Omega ,\nu )$, and let $(g_{n})_{{n\in {\mathbb  {N}}}}$ be a sequence of 
$L^{1}$-functions such that $g_{{n+m}}(x)\leq g_{n}(x)+g_{m}(T^{n}x)$ (subadditivity relation). Then
$\lim_{n\rightarrow \infty}\frac{g_n(x)}{n}=:g(x)$ holds $\nu$-a.e.,
where $g(x)$ is $T$-invariant. If $T$ is ergodic, then $g(x)$ is constant.
\end{proposition}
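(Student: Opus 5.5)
The proof I have in mind is the short ``greedy covering'' argument of Steele, carried out in three steps.

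\emph{Step 1: reduce to nonpositive functions.} I would first reduce to the case $g_n\le 0$. Put $S_n g_1=\sum_{k<n} g_1\circ T^k$. Iterating the subadditivity relation gives $g_n\le S_n g_1$. Set $h_n=g_n-S_ng_1\le 0$. Then $(h_n)$ is again subadditive, because $S_{n+m}g_1=S_ng_1+(S_mg_1)\circ T^n$. By Birkhoff's ergodic theorem, $\frac1n S_ng_1$ converges a.e.\ to a $T$-invariant function, which is constant if $T$ is ergodic. So it suffices to prove the statement for $(h_n)$, and from now on I assume $g_n\le 0$.

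\emph{Step 2: the lower limit is invariant.} Let $\underline g=\liminf g_n/n$ and $\overline g=\limsup g_n/n$, both with values in $[-\infty,0]$. From $g_{n+1}(x)\le g_1(x)+g_n(Tx)$ we get $\underline g(x)\le \underline g(Tx)$. For each $M>0$ put $G_M=\max(\underline g,-M)$. Then $G_M\le G_M\circ T$, and both sides have the same distribution since $T$ preserves $\nu$. A function dominated by its translate with the same law must equal it a.e.: integrate $\arctan G_M\circ T-\arctan G_M\ge 0$, which has integral $0$. Hence $G_M$ is $T$-invariant for every $M$, and therefore so is $\underline g$.

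\emph{Step 3: the upper limit is at most $G_M$.} This is the main step: showing $\overline g\le G_M$ a.e.\ for every $M$; letting $M\to\infty$ then gives $\lim g_n/n=\underline g$ a.e. Fix $\varepsilon>0$ and $N\in\N$. Let $B_N$ be the set of $x$ such that $g_n(x)>n(G_M(x)+\varepsilon)$ for all $1\le n\le N$. By definition of the $\liminf$, $\nu(B_N)\to 0$ as $N\to\infty$.

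For fixed $x$ and $K\gg N$, I would cut $[0,K)$ greedily, starting at $k=0$.
\begin{itemize}
\item If $T^kx\notin B_N$, take a block $[k,k+n)$ with $n\le N$ and $g_n(T^kx)\le n(G_M(x)+\varepsilon)$. Here I use that $G_M(T^kx)=G_M(x)$ by Step 2.
\item If $T^kx\in B_N$, take the singleton $[k,k+1)$ and use only $g_1(T^kx)\le 0$.
\end{itemize}
The last incomplete stretch has length at most $N$ and also contributes at most $0$. Subadditivity along this partition gives
\[
g_K(x)\le (G_M(x)+\varepsilon)\Big(K-N-\sum_{k<K}\mathbf 1_{B_N}(T^kx)\Big).
\]
Here the factor $G_M+\varepsilon$ may be positive, so the bound needs care; it is handled by the estimate $|G_M|\le M$. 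Dividing by $K$ and applying Birkhoff's theorem to $\mathbf 1_{B_N}$ yields
\[
\overline g(x)\le G_M(x)+\varepsilon+(M+\varepsilon)\,\mathbb E[\mathbf 1_{B_N}\mid\mathcal I](x),
\]
where $\mathcal I$ is the invariant $\sigma$-algebra. Letting $N\to\infty$ makes the last term tend to $0$ by dominated convergence for conditional expectations. Then letting $\varepsilon\to 0$ and $M\to\infty$ gives the claim.

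Finally, if $T$ is ergodic, the invariant function $\underline g$ is a.e.\ constant, which is the last assertion of the statement.
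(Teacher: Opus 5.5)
The paper does not prove this proposition. It quotes Kingman's theorem as a classical result and uses it only to pass to $\lambda_1(D)=\inf_{n}\frac1n\int_{\Delta}\log\|D^{(n)}(\bx)\|_1\,d\mu(\bx)$. Your argument is a correct, self-contained proof along the lines of Steele's covering argument. The three ingredients all go through:
\begin{itemize}
\item the reduction to $g_n\le 0$ via $g_n-S_ng_1$;
\item the invariance of $\underline g$ through the bounded truncations $G_M$;
\item the greedy decomposition of $[0,K)$, with the bad set $B_N$ controlled by Birkhoff's theorem.
\end{itemize}
You also correctly allow the limit to be $-\infty$.

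Two points of precision. First, $G_M$ takes values in $[-M,0]$, so you can integrate $G_M\circ T-G_M$ directly and the $\arctan$ is superfluous. Second, your displayed bound for $g_K(x)$ is not justified as written when $G_M(x)+\varepsilon>0$. In that case the lower bound on the total length $L$ of the good blocks points the wrong way. What the decomposition actually gives is $g_K(x)\le (G_M(x)+\varepsilon)L$ with $K-N-\sum_{k<K}\mathbf{1}_{B_N}(T^kx)\le L\le K$. Combined with $|G_M+\varepsilon|\le M+\varepsilon$, this yields
\[
g_K(x)\le K\,(G_M(x)+\varepsilon)+(M+\varepsilon)\Big(N+\sum_{k<K}\mathbf{1}_{B_N}(T^kx)\Big),
\]
which is exactly the inequality you then pass to the limit in.

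Compared with simply citing the theorem, your route makes the argument self-contained, but it proves only what is stated. The formula the paper derives right after the proposition also needs the a.e.\ constant to be identified with $\inf_n\frac1n\int g_n\,d\nu$. This is a short addition in your framework. Let $h=\lim_n h_n/n$ for your reduced sequence $h_n\le 0$.
\begin{itemize}
\item The reverse Fatou lemma gives $\lim_n\frac1n\int h_n\,d\nu\le\int h\,d\nu$.
\item The bound $h_{km}\le\sum_{j<k}h_m\circ T^{jm}$, together with Birkhoff's theorem for $T^m$, gives $\int h\,d\nu\le\frac1m\int h_m\,d\nu$ for every $m$.
\end{itemize}
By Fekete's lemma these combine to $\int h\,d\nu=\inf_m\frac1m\int h_m\,d\nu$, which identifies the constant in the ergodic case.
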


Using \Cref{prop:King},~\eqref{subadditivity relation}, and the equivalent definitions of the first Lyapunov exponent in~\eqref{eq:lyapudef}, we gain 
\begin{align*}
\lambda_1(D)=\inf_{n\in\mathbb{N}}\frac1n\int_{\Delta}\log{\|D^{(n)}(\bx)\|_1}d\mu(\bx),
\end{align*}
and, hence, the following result holds.

\begin{lemma}[{\cite[Theorem~3]{Hardcastle:02}}]\label{lem:HaEst}
Let $(\Delta, f)$ be an ergodic $2$-dimensional continued fraction algorithm with cocycle $A$. We have $\lambda_2(A)< 0$ if and only if there exists $n\in\mathbb{N}$
such that
\begin{align}\label{eq:l2int}
\frac{1}{n}\int_{\Delta}\log{\|D^{(n)}(\bx)\|_1d\mu(\bx)}<0.
\end{align}
\end{lemma}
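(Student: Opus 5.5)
The plan is to prove both directions of the equivalence using two facts. The first is the identity
\[
\lambda_1(D)=\inf_{n\in\N}\frac1n\int_\Delta\log\|D^{(n)}(\bx)\|_1\,d\mu(\bx),
\]
which was derived just before the statement from \Cref{prop:King} and \eqref{subadditivity relation}. The second is the comparison between $\lambda_2(A)$ and $\lambda_1(D)$.

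Before using the identity, I will check that $\log\|D^{(n)}\|_1$ is $\mu$-integrable, at least that its positive part is, so that Kingman's theorem applies. I expect this to be the main technical point. The entries of $D^{(n)}(\bx)$ are bounded by a constant times the entries of $A^{(n)}(\bx)$, because $0<x_i<1$. For fixed $n$, the function $A^{(n)}$ is constant on cylinders of rank $n$. So I need $\int\log\|A^{(n)}\|\,d\mu<\infty$, and by the cocycle property it suffices to treat $n=1$. For $n=1$ there are only four matrices, so $\log\|D^{(1)}\|_1$ is bounded above. If $\|D^{(n)}\|_1$ degenerates, $\log\|D^{(n)}\|_1$ may be $-\infty$ on a set. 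This is harmless: Kingman's theorem allows the limit to be $-\infty$, and the inequalities below still hold.

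\emph{Sufficiency.} Suppose \eqref{eq:l2int} holds for some $n$. The infimum formula gives $\lambda_1(D)\le \frac1n\int_\Delta\log\|D^{(n)}\|_1\,d\mu<0$. \Cref{lem.singular value} then gives $\lambda_2(A)\le\lambda_1(D)<0$. Ergodicity is needed here so that the Lyapunov exponents are constants; it is part of the hypothesis (and for the Reverse algorithm it is \Cref{ergodicity}).

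\emph{Necessity.} Suppose $\lambda_2(A)<0$. I need the reverse inequality $\lambda_1(D)\le\lambda_2(A)$, which the text mentions but does not prove, so I will prove it by the ellipse argument of \Cref{lem.singular value}. The map $H(\bx)$ is an isomorphism of $\mathbb{R}^2$ onto $\bx^\bot$ with uniformly bounded norm. The map $A^{(n)}(\bx)$ sends $\bx^\bot$ onto $(f^n\bx)^\bot$; this follows from ${}^t\!f^n(\bx)A^{(n)}(\bx)$ being proportional to ${}^t\bx$. Its restriction to $\bx^\bot$ has largest singular value at most $\delta_2(A^{(n)}(\bx))$ up to a factor controlled by the angle between $\bx^\bot$ and the top singular direction.

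This angle control is the delicate point. Instead of bounding singular values directly, I will use the Oseledets splitting. Since $A$ is a nonnegative cocycle with a generalized eigenvector, the most expanding direction of the transposed cocycle is asymptotically $\bx$ itself, by \Cref{lem:topconv}. Hence $\bx^\bot$ is the sum of the Oseledets subspaces for $\lambda_2$ and $\lambda_3$. Restricted to this sum, the growth rate of $A^{(n)}$ is exactly $\lambda_2(A)$ almost everywhere. Because $\Pi$ has bounded norm, it follows that $\lambda_1(D)\le\lambda_2(A)<0$.

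Finally, $\lambda_1(D)<0$ is an infimum of averages, so some average is negative, i.e.\ \eqref{eq:l2int} holds for some $n$. The expected obstacle is exactly this identification of $\bx^\bot$ with the slow Oseledets subspace. If it proves awkward, a fallback is to cite Hardcastle's original argument, \cite[Theorem~3]{Hardcastle:02}, for this direction.
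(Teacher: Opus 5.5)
Your proof is correct, and it follows a route the paper only partly spells out.

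\textbf{Sufficiency.} This direction is exactly the paper's argument. The infimum formula for $\lambda_1(D)$ comes from Kingman's theorem and \eqref{subadditivity relation}, and it is combined with $\lambda_2(A)\le\lambda_1(D)$ from \Cref{lem.singular value}.

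\textbf{Necessity.} The paper gives no argument here beyond the citation of Hardcastle and the remark that the reverse inequalities ``can also be shown''. Your Oseledets argument for $\lambda_1(D)\le\lambda_2(A)$ therefore adds something the paper omits, and it works. The map $H(\bx)$ sends $\mathbb{R}^2$ boundedly into $\bx^\bot$. Once $\bx^\bot$ is identified with the slow space $V_2(\bx)$ of the one-sided Oseledets filtration, $\|D^{(n)}(\bx)\|_1$ is bounded by a constant times $\max_{j\in\{1,2\}}\|A^{(n)}(\bx)H(\bx)\mathbf{e}_j\|_1$ with $H(\bx)\mathbf{e}_j\in V_2(\bx)$. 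Hence it grows at rate at most $\lambda_2(A)$.

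\textbf{Two refinements.}

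\emph{Identifying $V_2(\bx)$.} You do not need \Cref{lem:topconv}, nor the rather ambiguous ``most expanding direction of the transposed cocycle''. What that phrase should mean is the top right-singular vector of $A^{(n)}(\bx)$, i.e.\ the top left-singular vector of $M_{a_0}\cdots M_{a_{n-1}}$, which lies in the shrinking cone $M_{a_0}\cdots M_{a_{n-1}}\mathbb{R}^3_{\ge 0}$. A more direct route is to write ${}^t\!f^n(\bx)A^{(n)}(\bx)=c_n\,{}^t\bx$ with $c_n=\|{}^tA^{(n)}(\bx)f^n(\bx)\|_1\ge 1$. This holds because a regular nonnegative integer matrix has a positive entry in every column. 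It follows that $\|A^{(n)}(\bx)v\|_\infty\ge c_n|\langle\bx,v\rangle|\ge|\langle \bx,v\rangle|$, so no $v\notin\bx^\bot$ contracts. Since $\lambda_1(A)\ge 0>\lambda_2(A)$, the two-dimensional space $V_2(\bx)$ must equal $\bx^\bot$. This version keeps the lemma valid for a general ergodic algorithm, as stated, and not only for the Reverse algorithm.

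\emph{Integrability.} Your check via ``only four matrices'' is specific to the Reverse algorithm. In the general statement you must assume $\log\|A\|\in L^1(\mu)$, which the paper also assumes tacitly.
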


We are now ready to give the proof of ~\Cref{second Lyapunov negative}.

\begin{proof}[Proof of~\Cref{second Lyapunov negative}]
We split the integral in \eqref{eq:l2int} as
\begin{align*}
\frac{1}{n}\int_{\Delta}\log{\|D^{(n)}(\bx)\|_1d\mu(\bx)}&=I_1(n)+I_2(n)
\end{align*}
with
\begin{align*}
I_1(n)=\frac{1}{n}\int_{\bigcup_{i=1}^{3}\Delta(i^{n})}\log{\|D^{(n)}(\bx)\|_1d\mu(\bx)}, \quad
I_2(n)=\frac{1}{n}\int_{\Delta \setminus \bigcup_{i=1}^{3}\Delta(i^{n})}\log{\|D^{(n)}(\bx)\|_1d\mu(\bx)}.
\end{align*}
According to \Cref{lem:HaEst}, we have to estimate $I_1(n)$ and $I_2(n)$ for a suitable $n\in \N$.
For $n\in\N$, we set 
\begin{align*}
\mathbb{L}_1(n):&=\frac{1}{n}\sum_{i=1}^3\max_{\bx\in\Delta(i^{n})}\log{\|D^{(n)}(\bx)\|_1}\int_{\Delta(i^{n})}d\mu
\\&=\frac{3}{n}\log{\Big(\frac{2n(n-1)}{n+1}-1\Big)}\int_{0}^{\frac{1}{n+1}}\int_{0}^{\frac{1}{n+1}}\frac{4}{\pi^2(1-x)(1-y)(x+y)}dydx
\end{align*}
and
\begin{align*}
\mathbb{L}_2(n):=&\frac{4}{\pi^2n}\sum_{\substack{w\in\{1,2,3,4\}^n\setminus\bigcup_{i=1}^{3}\{i\}^n,\\ \max\{\log{\|D^{(n)}(\bx)\|_1}\,:\,\bx\in\Delta(w) \}>0}}
\bigg(\prod_{j=0}^2\max\Big\{\frac{1}{(1-x_j)}\,:\, \bx\in \Delta(w)\Big\}\bigg)
\\
&\hskip5cm\cdot \mathrm{Leb}(\Delta(w))\max_{\bx\in\Delta(w)}\log{\|D^{(n)}(\bx)\|_{1}}\\
&+\frac{4}{\pi^2n}\sum_{\substack{w\in\{1,2,3,4\}^n\setminus\bigcup_{i=1}^{3}\{i\}^n, \\ \max\{\log{\|D^{(n)}(\bx)\|_1}\,:\,\bx\in\Delta(w) \}<0}}\bigg(\prod_{j=0}^2\min\Big\{\frac{1}{(1-x_j)}\,:\, \bx\in \Delta(w)\Big\}\bigg)
\\
&\hskip5.2cm\cdot \mathrm{Leb}(\Delta(w))\max_{\bx\in\Delta(w)}\log{\|D^{(n)}(\bx)\|_1}.
\end{align*}
The sum of the integrals of the maximum of $\log{\|D^{(n)}(\bx)\|_{1}}$ on each cylinder set $\Delta(w)$ with $w\in\{1,2,3,4\}^n\setminus\bigcup_{i=1}^{3}\{i\}^n$, is larger than or equal to $I_2$. Furthermore, when the maximum of $\log{\|D^{(n)}(\bx)\|_{1}}$ is positive, choosing the largest density of the measure $\mu$ on each cylinder yields a quantity larger than or equal to $I_2$; when the maximum of one is negative, selecting the smallest density of the measure $\mu$ on each cylinder yields a quantity larger than or equal to $I_2$.
Taking this into account we conclude that $I_j(n) \le \mathbb{L}_j(n)$
for $j\in\{1,2\}$ and $n\in\N$.
We got the following estimates by computer calculation (with floating point error handling):
\begin{align}\label{eq:Lest}
\mathbb{L}_1(12)<0.024002, \quad \mathbb{L}_2(12)<-0.044610.
\end{align}
Therefore,
\begin{align*}
\frac{1}{12}\int_{\Delta}\log{\|D^{(12)}(\bx)\|_1}d\mu(\bx)<\mathbb{L}_1(12)+\mathbb{L}_2(12) < -0.020608,
\end{align*}
and the proof is finished.
\end{proof}

\begin{remark}[Precision of floating point calculations]
We calculated each term in $\mathbb{L}_1(12)$ and $\mathbb{L}_2(12)$ with $18$-digit precision and then added all of them. 
Each term contributes an error bounded by $<10^{-18}$, and the number of summands equals $4^{12}<10^8$ terms, so the total error is bounded by $10^{-10}$. This error is already added in the estimates of $\mathbb{L}_1(12)$ and $\mathbb{L}_2(12)$ provided in \eqref{eq:Lest}.
\end{remark}

\section{Balancedness}\label{sec:bal}
In this section, we will prove the balancedness result contained in \Cref{th:balance}.
Roughly speaking, we show that balance properties of a set of substitutions (in the sense of \Cref{lem:sigma_ibalance}) lead to balance properties of the languages of the directive sequences formed by these substitutions. 
We start with two results on the behavior of letter balancedness under the action of the substitutions~\eqref{eq:reversSubst} of the Reverse algorithm. Recall that a pair $(v_1,v_2)\in\cA^*\times \cA^*$ of words is called {\em $C$-letter balanced}, if for each $i\in\cA$ we have $\big| |v_1|_i-|v_2|_i\big| \le C$.

\begin{lemma}\label{lem:presigma_ibalance}
Let $\sigma\in\{\sigma_1,\sigma_2,\sigma_3\}$ and let $(v_1,v_2)\in \cA^*\times\cA^*$ with $|v_1|=|v_2|$ be $C$-letter balanced. If $|\sigma(v_1)| \le |\sigma(v_2)|$ and if $w$ is a factor of $\sigma(v_2)$ with $|w|=|\sigma(v_1)|$, then the pair $(\sigma(v_1),w)$ is $C$-letter balanced.
\end{lemma}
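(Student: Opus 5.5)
By the symmetry of the three Arnoux–Rauzy substitutions under permutations of the alphabet, I will treat only $\sigma=\sigma_1$, i.e.\ $1\mapsto 1$, $2\mapsto 21$, $3\mapsto 31$. The first step is to record the abelian data of $\sigma_1$. For every word $v$ we have
\[
|\sigma_1(v)|_1=|v|,\qquad |\sigma_1(v)|_j=|v|_j \ (j\in\{2,3\}),\qquad |\sigma_1(v)|=2|v|-|v|_1 .
\]
The hypothesis $|\sigma(v_1)|\le|\sigma(v_2)|$ together with $|v_1|=|v_2|$ therefore reads $d:=|v_1|_1-|v_2|_1\ge 0$. Moreover, $d\le C$ by $C$-letter balancedness. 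We also have $|\sigma(v_2)|-|w|=d$.

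The second step is a desubstitution of $w$. In $\sigma_1(v_2)$ every occurrence of $2$ or $3$ is immediately followed by $1$. Hence $w$ is obtained from $\sigma_1(v_2)$ by deleting a prefix $p$ and a suffix $s$ with $|p|+|s|=d$. Equivalently, $w=x\,\sigma_1(u)\,y$ with $u$ a factor of $v_2$, $x\in\{\epsilon,1\}$ and $y\in\{\epsilon,2,3\}$. I will compare $w$ with $\sigma_1(v_2)$ rather than with $\sigma_1(u)$, because only the pair $(v_1,v_2)$ is assumed balanced.

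The third step bounds the letters $j\in\{2,3\}$. From above, $|w|_j\le|\sigma_1(v_2)|_j=|v_2|_j$, so $|w|_j-|\sigma_1(v_1)|_j\le|v_2|_j-|v_1|_j\le C$. From below, $|\sigma_1(v_1)|_j-|w|_j=(|v_1|_j-|v_2|_j)+(|p|_j+|s|_j)$. I will estimate the deleted counts through the identity
\[
\sum_{j\in\{2,3\}}(|v_1|_j-|v_2|_j)=-d .
\]
This gives $|v_1|_j-|v_2|_j\le -d+C$ for each $j$, since the other letter differs by at most $C$. Combined with $|p|_j+|s|_j\le |p|+|s|=d$, this yields $|\sigma_1(v_1)|_j-|w|_j\le C$.

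The last step is the letter $1$. Since $|w|=|\sigma_1(v_1)|$, we have
\[
|w|_1-|\sigma_1(v_1)|_1=-\sum_{j\in\{2,3\}}\big(|w|_j-|\sigma_1(v_1)|_j\big).
\]
Writing $|w|_j=|v_2|_j-|p|_j-|s|_j$, this equals $-d+(|p|_2+|p|_3+|s|_2+|s|_3)$. Because $2$ and $3$ are never adjacent in $\sigma_1(v_2)$, the deleted words $p,s$ contain at most roughly half non-$1$ letters each. So this quantity lies between $-d\ge -C$ and a small nonnegative number that is at most $d\le C$.

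I expect the main obstacle to be exactly this bookkeeping of the deleted prefix and suffix. In particular, the lower bound for letters $2,3$ needs the joint use of the balance of both letters, as above, instead of a naive letterwise estimate. I would also check the boundary parity cases ($x=1$, $y\neq\epsilon$) separately, to make sure no additive constant $+1$ appears.
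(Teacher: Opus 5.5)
Your argument is correct and is essentially the paper's proof: you compare $w$ directly with $\sigma_1(v_2)$, use that deleting $d=|v_1|_1-|v_2|_1\le C$ letters lowers each letter count by between $0$ and $d$, and use $|v_1|=|v_2|$ (i.e.\ the balance of the other letter) to get $|v_1|_j-|v_2|_j+d\le C$ for $j\in\{2,3\}$, exactly as the paper does with its differences $d_1$, $d_2$, $-d_1-d_2$. The non-adjacency remark in your last step and the planned check of boundary cases are unnecessary, since $|w|_1-|\sigma_1(v_1)|_1=-(|p|_1+|s|_1)\in[-d,0]$ follows at once from $|p|+|s|=d$.
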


\begin{proof}
By symmetry, we may assume w.l.o.g.\ that $\sigma=\sigma_1$. 
Because $|\sigma(v_1)| \le |\sigma(v_2)|$, $C$-letter balancedness of $(v_1,v_2)$ implies that there is $d_1\in\{0,\ldots,C\}$ such that $|v_1|_1=|v_2|_1+d_1$ and, hence, $|\sigma(v_1)| +d_1= |\sigma(v_2)|$. By $C$-letter balancedness of $(v_1,v_2)$ and because $|v_1|=|v_2|$, there is $d_2\in \{-C,\ldots,C-d_1\}$ such that
\[
\begin{split}
|v_1|_2&=|v_2|_2+d_2, \\
|v_1|_3&=|v_2|_3-d_1-d_2.
\end{split}
\]
By the definition of $\sigma=\sigma_1$, this yields
\[
\begin{split}
|\sigma(v_1)|_1 &= |\sigma(v_2)|_1, \\
|\sigma(v_1)|_2 &= |\sigma(v_2)|_2 +d_2, \\
|\sigma(v_1)|_3 &= |\sigma(v_2)|_3 -d_1-d_2.
\end{split}
\]
Because $w\prec \sigma(v_2)$ with $|w|=|\sigma(v_2)|-d_1$ this implies 
\[
\begin{split}
|w|_1 \le |\sigma(v_1)|_1 &\le |w|_1+d_1, \\
d_2+|w|_2 \le |\sigma(v_1)|_2 &\le |w|_2+d_1+d_2, \\
-d_1-d_2+|w|_3 \le |\sigma(v_1)|_3 &\le |w|_3-d_2.
\end{split}
\]
Since $|d_1|\le C$, $|d_2|\le C$, and $|d_1+d_2| \le C$ the result follows.
\end{proof}

\begin{proposition}\label{lem:sigma_ibalance}
Let $u\in \cA^*$ be given. If $u$ is $C$-letter balanced, then $\sigma_i(u)$ is $(C+4)$-letter balanced for $i\in\{1,2,3\}$ and $(C+2)$-letter balanced for $i=4$.
\end{proposition}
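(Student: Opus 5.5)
We fix two factors $w_1,w_2$ of $\sigma_i(u)$ with $|w_1|=|w_2|$ and show that $(w_1,w_2)$ is $(C+4)$-, resp.\ $(C+2)$-letter balanced. The plan is to \emph{desubstitute}: write each $w_j$ as $w_j = s_j\,\sigma_i(v_j)\,p_j$, where $v_j$ is a factor of $u$, $s_j$ is a proper suffix of the image of a single letter, and $p_j$ is a proper prefix of the image of a single letter. Since all images have length at most $2$, each of $s_j$ and $p_j$ has length at most $1$. The $C$-letter balance of $u$ then controls $v_1$ against $v_2$, provided we first make their lengths comparable.

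\textbf{Case $i\in\{1,2,3\}$.} By symmetry take $\sigma=\sigma_1$. Every image $\sigma_1(j)$ starts with $j$ and is either $1$ or $j1$.
\begin{enumerate}
\item Cut each $w_j$ at occurrences of letters $\neq 1$, or equivalently at positions where a new image begins. This writes $w_j=s_j\sigma(v_j)$ with $|s_j|\le 1$ and $s_j\in\{\epsilon,1\}$; the trailing part needs no prefix, since a prefix of $j1$ is $j=\sigma$-image of a start. More precisely, $w_j = s_j\sigma(v_j)p_j$ with $p_j$ a single letter $j\neq 1$, which is the first letter of $\sigma_1(j)$.
\item Compare $v_1$ and $v_2$. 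If $|v_1|\neq|v_2|$, shorten the longer one, say $v_2$, to $v_2'$ with $|v_2'|=|v_1|$. Then $(v_1,v_2')$ is $C$-balanced because both are factors of $u$.
\item Apply Lemma~\ref{lem:presigma_ibalance} to $(v_1,v_2')$ to match $\sigma(v_1)$ against an equal-length factor of $\sigma(v_2)$.
\end{enumerate}
The leftover pieces are the $s_j$, the $p_j$, and the length discrepancy between $\sigma(v_2')$ and the corresponding part of $w_2$. Each contributes at most one letter per side, and adding them up gives the extra $+4$.

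The cleaner route is probably to apply Lemma~\ref{lem:presigma_ibalance} directly. Choose $v_1$ maximal with $\sigma(v_1)\prec w_1$, so that $|w_1|-|\sigma(v_1)|\le 2$. Choose $v_2$ minimal with $w_2\prec\sigma(v_2)$, so that $|\sigma(v_2)|-|w_2|\le 2$. Then, after trimming to equal lengths, compare them. If $|v_1|>|v_2|$, we need the symmetric version of the lemma, which follows by swapping the roles of the two words. The bookkeeping of these at most $2+2$ boundary letters yields $C+4$. The main obstacle is exactly this: ensuring that the lengths $|v_1|,|v_2|$ differ in the direction required by Lemma~\ref{lem:presigma_ibalance} (that is, $|\sigma(v_1)|\le|\sigma(v_2)|$) while $|v_1|=|v_2|$. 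I would handle it by first shortening or extending $v_1$ by letters until $|v_1|=|v_2|$. Each such adjustment changes letter counts by at most one per step, and the $C$-balance of $u$ bounds how many $1$'s can differ, which controls the resulting length gap.

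\textbf{Case $i=4$.} This case is easier because $\sigma_4$ has constant length $2$. Write $w_j=s_j\sigma_4(v_j)p_j$ with $|s_j|,|p_j|\le1$. Since $|w_1|=|w_2|$, we have $|s_1|+|p_1|\equiv|s_2|+|p_2|\pmod 2$. Therefore either $|v_1|=|v_2|$, or the lengths differ by one and one side has both boundary letters while the other has none. In the second case, absorb the boundary into $\sigma_4$ of a one-letter-longer factor minus two letters.

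With $|v_1|=|v_2|$ and $(v_1,v_2)$ $C$-balanced, the incidence matrix $M_4$ gives
\[
\bigl||\sigma_4(v_1)|_k-|\sigma_4(v_2)|_k\bigr| = \bigl|(|v_1|-|v_2|)-(|v_1|_k-|v_2|_k)\bigr| \le C,
\]
since $|\sigma_4(v)|_k=|v|-|v|_k$. The at most two boundary letters then give $C+2$. In the case where the lengths differ by one, compare $\sigma_4(v_1)$, with $|v_1|=|v_2|+1$, against $s_2\sigma_4(v_2)p_2$, where $s_2p_2$ are two letters. Dropping one letter of $v_1$ and comparing gives a difference of at most $C$ from the balanced pair plus contributions from $\sigma_4$ of one letter and from $s_2p_2$. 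Using that $\sigma_4(a)$ omits $a$, these contributions partially cancel, which keeps the total at $C+2$.
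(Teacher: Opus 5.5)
Your case $i=4$ is correct and is the paper's argument. After moving at most one full image $\sigma_4(a)$ into the boundary you have $|v_1|=|v_2|$, and the identity $|\sigma_4(v)|_k=|v|-|v|_k$ transfers $C$-balance exactly. The two boundary parts have equal length at most $2$, so they cost at most $2$; no cancellation argument is needed.

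For $i\in\{1,2,3\}$ there is a genuine gap, exactly at the step you flag as the main obstacle. Both of your accountings assume that $|v_1|$ and $|v_2|$ are nearly equal: ``the length discrepancy contributes at most one letter per side'', and ``each adjustment step costs at most one letter''. They are not nearly equal. For $\sigma=\sigma_1$ one has $|\sigma(v)|=2|v|-|v|_1$. So $\bigl||\sigma(v_1)|-|\sigma(v_2)|\bigr|\le 2$ together with the $C$-balance of $u$ only yields $\bigl||v_1|-|v_2|\bigr|\le C+2$. Gaps of order $C$ do occur. Take $4\mid C$ and $u=(23)^m1^C(23)^m$, which is $C$-letter balanced, with $w_1=\sigma_1((23)^j)$ and $w_2=\sigma_1(1^C(23)^{j-C/4})$; then $|v_2|-|v_1|=C/2$. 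Two things go wrong if you shorten $v_2$ to $v_2'$ with $|v_2'|=|v_1|$:
\begin{itemize}
\item The discarded part of $\sigma(v_2)$ can have about $C$ letters.
\item $|\sigma(v_2')|$ can be smaller than $|\sigma(v_1)|$ by up to $C$; in the example, $v_2'=1^C(23)^{j-C/2}$ gives $|\sigma(v_2')|=4j-C$. Then Lemma~\ref{lem:presigma_ibalance} must be used with the roles exchanged, and it only compares $\sigma(v_2')$ with a factor of $\sigma(v_1)$ about $C$ letters shorter than $\sigma(v_1)$.
\end{itemize}
Your bookkeeping then gives $2C+O(1)$, not $C+4$. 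Adjusting $v_1$ letter by letter has the same defect, since the number of steps is of order $C$.

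The missing idea is never to shorten. Write $w_j=a_j\sigma(v_j)b_j$ with $|a_j|,|b_j|\le 1$, and assume $|\sigma(v_1)|\le|\sigma(v_2)|$, so the two image lengths differ by at most $2$. Always \emph{extend} the shorter of $v_1,v_2$ inside $u$ to the length of the other. Lemma~\ref{lem:presigma_ibalance} holds for \emph{every} factor $w$ of the longer image with the prescribed length, so you may choose $w$ nested with $\sigma(v_1)$ or $\sigma(v_2)$:
\begin{itemize}
\item If $|v_1|\ge|v_2|$, extend $v_2$ to $v_2'$ and take $w\prec\sigma(v_2)$.
\item If $|v_1|<|v_2|$, extend $v_1$ to $v_1'$. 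When $|\sigma(v_1')|\ge|\sigma(v_2)|$, apply the lemma with roles exchanged and take $w$ with $\sigma(v_1)\prec w\prec\sigma(v_1')$.
\item When $|v_1|<|v_2|$ but $|\sigma(v_1')|<|\sigma(v_2)|$, note that necessarily $|\sigma(v_1')|=|\sigma(v_1)|+1$ and $|\sigma(v_2)|=|\sigma(v_1)|+2$.
\end{itemize}
The image-length discrepancy of order $C$ created by the extension costs nothing, because it is absorbed by the lemma itself (it is the quantity $d_1$ in its proof). The residual mismatch is at most $2$ letters, so $(\sigma(v_1),\sigma(v_2))$ is $(C+2)$-letter balanced. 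The boundary letters $a_j,b_j$ account for the remaining $2$.
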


\begin{proof}
First we consider the case $i\in\{1,2,3\}$ and set $\sigma=\sigma_i$. We have to show that each pair of factors $(w_1,w_2)$ of $\sigma(u)$ with $|w_1|=|w_2|$ is $(C+4)$-letter balanced. There are factors $v_1,v_2$ of $u$ and words $a_1,a_2,b_1,b_2$ of length at most $1$ such that
\begin{equation}\label{eq:w1w2bala1}
w_1=a_1\sigma(v_1)b_1 \quad\text{and}\quad w_2=a_2\sigma(v_2)b_2.
\end{equation}
Thus $-2 \le |\sigma(v_1)|-|\sigma(v_2)| \le 2$ and we may assume w.l.o.g.\ that $|\sigma(v_1)| \le |\sigma(v_2)|$. We claim that $(\sigma(v_1),\sigma(v_2))$ is $(C+2)$-letter balanced. To prove this claim we distinguish two cases.

Case~(i):  $|v_1| < |v_2|$. In this case there exists $v_1'$ with $v_1 \prec v_1' \prec u$ and $|v_1'| =|v_2|$. 
If $|\sigma(v_1')| < |\sigma(v_2)|$ the only possibility is $|\sigma(v_1')| = |\sigma(v_1)|+1$ and $|\sigma(v_2)| = |\sigma(v_1)|+2$. We may apply \Cref{lem:presigma_ibalance} to see that for a factor $w$ of $\sigma(v_2)$ with $|w|=|\sigma(v_1')|$ the pair $(\sigma(v_1'),w)$ is $C$-letter balanced. Because $|\sigma(v_1)|+1= |\sigma(v_1')| = |w| < |\sigma(v_2)| = |\sigma(v_1)|+2$, this implies that $(\sigma(v_1),\sigma(v_2))$ is $(C+2)$-letter balanced.
If $|\sigma(v_1')| \ge |\sigma(v_2)|$, we may again apply \Cref{lem:presigma_ibalance} to see that for a factor $w$ of $\sigma(v_1')$ with $|w|=|\sigma(v_2)|$ the pair $(w,\sigma(v_2))$ is $C$-letter balanced. Because $|\sigma(v_1)|\le |w| \le |\sigma(v_1)|+2$ we may choose $w$ in a way that $\sigma(v_1) \prec w$ and, hence, $(\sigma(v_1),\sigma(v_2))$ is $(C+2)$-letter balanced.

Case~(ii):  $|v_1| \ge |v_2|$. In this case there exists $v_2'$ with $v_2 \prec v_2' \prec u$ and $|v_2'| =|v_1|$. Since $|\sigma(v_1)| \le |\sigma(v_2)| \le |\sigma(v_2')|$, we may apply \Cref{lem:presigma_ibalance} to see that for a factor $w$ of $\sigma(v_2')$ with $|w|=|\sigma(v_1)|$ the pair $(\sigma(v_1),w)$ is $C$-letter balanced. If we choose $w$ in a way that $w\prec \sigma(v_2)$ it follows from $|\sigma(v_2)|-2 \le |w|\le |\sigma(v_2)|$ that  $(\sigma(v_1),\sigma(v_2))$ is $(C+2)$-letter balanced.

Summing up we proved that $(\sigma(v_1),\sigma(v_2))$ is $(C+2)$-letter balanced. Thus we see from~\eqref{eq:w1w2bala1} that the pair $(w_1,w_2)$ is $(C+4)$-letter balanced and the result is proved for $i\in\{1,2,3\}$.

It remains to deal with the case $i=4$. We have to show that each pair of factors $(w_1,w_2)$ of $\sigma_4(u)$ with $|w_1|=|w_2|$ is $(C+2)$-letter balanced. There are factors $v_1,v_2$ of $u$ with $|v_1|=|v_2|$ and words $a_1,a_2,b_1,b_2$ with $|a_1| \le 1$, $|b_1|\le 1$, $|a_2|+|b_2|\le 2$ such that
\begin{equation}\label{eq:w1w2bala2}
w_1=a_1\sigma_4(v_1)b_1 \quad\text{and}\quad w_2=a_2\sigma_4(v_2)b_2.
\end{equation}
Thus it is enough to show that $(\sigma_4(v_1),\sigma_4(v_2))$ is $C$-letter balanced.
By the definition of $\sigma_4$ we have
\[
|\sigma_4(v_\ell)|_1 = |v_\ell|_2 + |v_\ell|_3, \quad
|\sigma_4(v_\ell)|_2 = |v_\ell|_3 + |v_\ell|_1, \quad
|\sigma_4(v_\ell)|_3 = |v_\ell|_1 + |v_\ell|_2 \qquad (\ell\in\{1,2\}),
\]
which implies that 
$|\sigma_4(v_1)|_i - |\sigma_4(v_2)|_i=|v_1|_j - |v_2|_j+|v_1|_k-|v_2|_k=|v_2|_i-|v_1|_i\in[-C,C]$ for all $i\in\{1,2,3\}$ and $\{i,j,k\}=\{1,2,3\}$.
Thus we see from~\eqref{eq:w1w2bala2} that the pair $(w_1,w_2)$ is $(C+2)$-letter balanced because $(\sigma_4(v_1),\sigma_4(v_2))$ is $C$-letter balanced.
\end{proof}

Balancedness of a word $w\in\cA^*$ is known to be related to projection properties of its abelianization $\mathbf{l}(w)$. To state the according results we use notation from \cite{BST:19}. In particular, for $\bv,\bw\in \mathbb{R}^3 \setminus\{\mathbf{0}_3\}$ we denote the projection to $\bw^\bot$ along $\bv$ by~$\pi_{\bv,\bw}$, i.e.,
\begin{equation}\label{eq:projections}
\begin{aligned}
\pi_{\bv,\bw}:\ & \mathbb{R}^3 \to \bw^\perp, & \bx & \mapsto  \bx - \frac{\langle \bx, \bw \rangle}{\langle\bv ,\bw\rangle}\bv. 
\end{aligned}
\end{equation}
Set $\bone=\,^t(1,1,1)$.
We continue with the relation between letter balancedness and projections of abelianizations.
It is an immediate consequence of the proof of \cite[Lemma~4.1]{BST:19} or \cite[Lemma~2]{Delecroix-Hejda-Steiner}. 

\begin{lemma}[{see \cite[Lemma~4.1]{BST:19} or \cite[Lemma~2]{Delecroix-Hejda-Steiner}}]\label{lem:sadiclem41}
Let $C>0$ be given.
\begin{itemize}
\item[(i)]
Let $\btau$ be a primitive sequence of substitutions with generalized right eigenvector $\bu$.
\begin{itemize}
\item  If $\cL_{\btau}$ is $C$-letter balanced, then
$\Vert \pi_{\bu,\bone}(\mathbf{l}(u)) \Vert_\infty \le C$ for each $u\in \cL_{\btau}$. 
\item If $\Vert \pi_{\bu,\bone}(\mathbf{l}(u)) \Vert_\infty \le C$ for each $u\in \cL_{\btau}$, then $\cL_{\btau}$ is $2C$-letter balanced.
\end{itemize}
\item[(ii)]
Let $u\in \cA^\N$ be given.
\begin{itemize}
\item  If $u$ is $C$-letter balanced, then there is $\bu\in\mathbb{R}^3_{\ge 0}$  with $\Vert \pi_{\bu,\bone}(\mathbf{l}(v)) \Vert_\infty \le C$ for each $v\prec u$. 
\item If $\Vert \pi_{\bu,\bone}(\mathbf{l}(p)) \Vert_\infty \le C$ for some $\bu\in\mathbb{R}^3_{\ge 0}$ and each prefix $p$ of $u$, then $u$ is $4C$-letter balanced.
\end{itemize}
\end{itemize}
If (ii) holds, we say that $\bu$ is the {\em frequency vector} of the letter balanced sequence $u$.
\end{lemma}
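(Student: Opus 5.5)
The statement to prove is Lemma~\ref{lem:sadiclem41}. I will follow the standard argument of \cite[Lemma~4.1]{BST:19}. The key identity is that, for $\bx\in\mathbb{R}^3$,
\[
\pi_{\bu,\bone}(\bx)=\bx-\frac{\langle\bx,\bone\rangle}{\langle\bu,\bone\rangle}\bu .
\]
After normalizing $\|\bu\|_1=1$, this reads $\pi_{\bu,\bone}(\mathbf{l}(v))=\mathbf{l}(v)-|v|\,\bu$. Every statement therefore compares letter counts of a word with the expected counts $|v|\bu$.

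\emph{Item~(ii).} Suppose $u$ is $C$-letter balanced. Fix a letter $i$ and set $f(n)=|u_0\cdots u_{n-1}|_i$. For all $m,n$, balancedness gives
\[
|f(m+n)-f(m)-f(n)|\le C .
\]
Hence $f(n)+C$ is subadditive and $f(n)-C$ is superadditive. By Fekete's lemma both $(f(n)+C)/n$ and $(f(n)-C)/n$ converge to a common limit $u_i$, and moreover $f(n)-C\le nu_i\le f(n)+C$. The same bound holds for every factor $v$, since $v$ has the same length as the prefix of length $|v|$ and is therefore $C$-close to it letterwise. The vector $\bu=(u_i)_i$ is nonnegative with $\|\bu\|_1=1$, so $\|\mathbf{l}(v)-|v|\bu\|_\infty\le C$.

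For the converse, write a factor as a difference of prefixes, $v=p'^{-1}p$. Linearity of $\mathbf{l}$ and of the projection gives $\|\pi(\mathbf{l}(v))\|_\infty\le 2C$ for every factor. For two factors $v_1,v_2$ of equal length, $\mathbf{l}(v_1)-\mathbf{l}(v_2)$ has coordinate sum zero, so it equals $\pi(\mathbf{l}(v_1))-\pi(\mathbf{l}(v_2))$. Its sup norm is therefore at most $4C$.

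\emph{Item~(i).} The same reasoning applies to the language. If $\cL_{\btau}$ is $C$-letter balanced, primitivity lets me build long words $\tau_{[0,n)}(i)$ in $\cL_{\btau}$ whose normalized abelianizations $\mathbf{l}(\tau_{[0,n)}(i))/|\tau_{[0,n)}(i)|$ converge to $\bu/\|\bu\|_1$. This uses that the columns of $M_{[0,n)}$ lie in the nested cones shrinking to $\mathbb{R}_{\ge0}\bu$. Now compare $u\in\cL_{\btau}$ with a factor of the same length inside such a long word $W$. Averaging over the decomposition of $W$ into consecutive blocks of length $|u|$, together with $C$-balance, gives $\big|\,|u|_i-|u|\,u_i\big|\le C$, with $\|\bu\|_1=1$. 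The converse, with constant $2C$, follows exactly as above: for $|u|=|u'|$ the difference $\mathbf{l}(u)-\mathbf{l}(u')$ equals the difference of the projections.

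The main obstacle is the first bullet of (i): identifying the frequency with the generalized eigenvector $\bu$. I would handle it by noting that any limit of normalized abelianizations of $\tau_{[0,n)}(i)$ lies in $\bigcap_n M_{[0,n)}\mathbb{R}^3_{\ge0}=\mathbb{R}_{\ge0}\bu$. Together with the Fekete-type uniform bound along a single long word, this forces the frequency to be $\bu$.
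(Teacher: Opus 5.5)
Your route is the standard argument behind \cite[Lemma~4.1]{BST:19} and \cite[Lemma~2]{Delecroix-Hejda-Steiner}, which is all the paper offers as a proof. Part~(i) and the second bullet of~(ii) are correct as written; in particular, your block-averaging in~(i) does yield the constant $C$.

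The one step that fails is the transfer from prefixes to factors in the first bullet of~(ii). Fekete gives $\bigl|\,|p|_i-|p|u_i\bigr|\le C$ for prefixes $p$. Comparing an arbitrary factor $v$ with the prefix of length $|v|$ then costs a second $C$ by the triangle inequality. So, as written, you only get $\Vert\pi_{\bu,\bone}(\mathbf{l}(v))\Vert_\infty\le 2C$, not the stated $C$. This matters for the paper. \Cref{lem:Mbound} applies exactly this bullet to the prefixes $\tau_{[m,n)}(v_0\cdots v_k)$ of a balanced word in order to reach the bound $10$. \Cref{th:balanceLetter} then needs $q=10\cdot(5/7)^7<1$. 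With $2C$ one would get roughly $20$ instead of $10$ in \Cref{lem:Mbound}, and $20\cdot(5/7)^7>1$.

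The repair is immediate, and there are two ways to do it.
\begin{itemize}
\item Run your Fekete argument on suffixes. Every suffix $u_mu_{m+1}\cdots$ is again $C$-letter balanced. Its counting function $f_m(n)=f(m+n)-f(m)$ satisfies $|f_m(a+b)-f_m(a)-f_m(b)|\le C$, and $f_m(n)/n\to u_i$. Fekete applied to $f_m$ then gives $\bigl|\,|v|_i-|v|u_i\bigr|\le C$ directly for every factor $v=u_m\cdots u_{m+|v|-1}$.
\item Reuse the averaging from~(i). Cutting long prefixes into blocks of length $N$ shows that $Nu_i$ lies between the minimum and the maximum of $|v|_i$ over factors $v$ of length $N$. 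These two numbers differ by at most $C$.
\end{itemize}
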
 

Let $\btau=(\tau_n)_{n\ge 0} \in\{\sigma_1,\sigma_2,\sigma_3,\sigma_4\}^\N$ be a directive sequence  with generalized right eigenvector $\bu$. In all that follows, we will denote the associated sequence of incidence matrices by $(\bar B_n)_{n\in\N}=( B_{\tau_n})_{n\in\N}$. We will write
\[
\bar B_{[m,n)} = \bar B_m\cdots \bar B_{n-1} \qquad (m,n\in\N \text{ with } m\le n).
\]

Moreover, we will use the notation $\bu_n = \bar B_{{[0,n)}}^{-1} \bu$ and $\bone_n=\,^t(\bar B_{{[0,n)}})\bone$. 
Using this, we gain
$\bone_n^\bot = (^t(\bar B_{{[0,n)}})\bone)^\bot = \bar B_{{[0,n)}}^{-1}\bone^\bot$, and $\bar B_{{[m,n)}} \circ \pi_{\bu_n,\bone_n}= \pi_{\bu_m,\bone_m} \circ \bar B_{[m,n)}$. We have the following technical result. (See also \cite[Section~8]{CCL22} for related, but different results.)
\begin{lemma}\label{lem:projcahnge}
Let $\btau=(\tau_n)_{n\ge 0} \in\{\sigma_1,\sigma_2,\sigma_3,\sigma_4\}^\N$ be a directive sequence with sequence of incidence matrices $(\bar B_n)_{n\in\N}$. If $n\in\N$ satisfies $\bar B_{{[n-3,n+3)}}=(B_{\sigma_1}B_{\sigma_2}B_{\sigma_3})^2$, 
then $\| \pi_{\bu_n,\bone_n} \|_\infty \le \frac{11}7$ and $\| \pi_{\bu_n,\bone} \|_\infty \le \frac{11}7$.
\end{lemma}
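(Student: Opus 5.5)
The plan is to reduce both estimates to an explicit computation of the row sums of the matrix of $\pi_{\bv,\bw}$. We locate $\bu_n$ and $\bone_n$ in the cones spanned by $P:=B_{\sigma_1}B_{\sigma_2}B_{\sigma_3}$ and by its transpose, respectively, and then maximize a linear-fractional expression over the extreme rays of these cones.

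\emph{Step 1 (norm formula).} For $\bv,\bw\in\mathbb{R}^3_{>0}$ the matrix of $\pi_{\bv,\bw}$ is $I-\frac{\bv\,{}^t\bw}{\langle\bv,\bw\rangle}$. Its $i$-th row has diagonal entry $1-\frac{v_iw_i}{\langle\bv,\bw\rangle}\in(0,1]$ and off-diagonal entries $-\frac{v_iw_j}{\langle\bv,\bw\rangle}$. Hence
\[
\|\pi_{\bv,\bw}\|_\infty=1+\max_{i}\frac{v_i\,(\|\bw\|_1-2w_i)}{\langle\bv,\bw\rangle}.
\]
For $\bw=\bone$ and $\|\bv\|_1=1$ this becomes $1+\max_i v_i$.

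\emph{Step 2 (locating the vectors).} A direct multiplication gives
\[
P=\begin{pmatrix}4&3&2\\2&2&1\\1&1&1\end{pmatrix}.
\]
Since $\bu_n=\bar B_{[0,n)}^{-1}\bu$ is a generalized right eigenvector of the shifted sequence, it lies in $\bar B_{[n,n+3)}\mathbb{R}^3_{\ge0}=P\,\mathbb{R}^3_{\ge0}$. Similarly,
\[
\bone_n={}^t\bar B_{[n-3,n)}\,{}^t\bar B_{[0,n-3)}\bone,
\]
and ${}^t\bar B_{[0,n-3)}\bone$ is nonnegative, so $\bone_n\in{}^tP\,\mathbb{R}^3_{\ge0}$. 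This is exactly where the hypothesis $\bar B_{[n-3,n+3)}=P^2$ is used: the first block $P$ controls $\bone_n$ and the second controls $\bu_n$.

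\emph{Step 3 (second inequality).} The columns of $P$, normalized in the $1$-norm, are $\frac17(4,2,1)$, $\frac16(3,2,1)$ and $\frac14(2,1,1)$. Their largest coordinates are $\frac47$, $\frac12$ and $\frac12$. Each coordinate of a normalized vector in the cone is a convex combination of the corresponding coordinates of these generators, so $\max_i (u_n)_i\le\frac47$. Step~1 then gives $\|\pi_{\bu_n,\bone}\|_\infty\le\frac{11}7$.

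\emph{Step 4 (first inequality).} For fixed $i$ and fixed $\bw$, the quantity $\frac{v_i(\|\bw\|_1-2w_i)}{\langle\bv,\bw\rangle}$ is a ratio of linear forms in $\bv$ with positive denominator. Such a ratio attains its maximum on the cone at an extreme ray, that is, at a column of $P$. Symmetrically, for fixed $\bv$ it is linear-fractional in $\bw$, so its maximum is attained at a row of $P$. It therefore suffices to check the $3\times3\times3$ cases (column of $P$, row of $P$, index $i$) and verify that each value is at most $\frac47$. For instance, $\bv=(4,2,1)$, $\bw=(1,1,1)$, $i=0$ gives $\frac{4\cdot1}{7}=\frac47$, and $\bw=(2,2,1)$ gives $\frac{4\cdot1}{14}$.

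I expect Step~4 to be the main obstacle. It is a finite check, but one must confirm that none of the 27 values exceeds $\frac47$, which is the bound attained already in the $\bw=\bone$ case. If some pair exceeded $\frac47$, one would have to exploit additional constraints on $\bone_n$, for example its position inside ${}^tP\,{}^tP\,\mathbb{R}^3_{\ge0}$ obtained by using more of the block.
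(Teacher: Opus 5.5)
Your proposal is correct and follows essentially the paper's route. Like the paper, you place $\bu_n$ in the cone spanned by the columns of $P=B_{\sigma_1}B_{\sigma_2}B_{\sigma_3}$ and $\bone_n$ in the cone spanned by its rows, then use the linear-fractional structure to reduce to extreme rays; the only differences are that your closed-form row-sum expression for $\|\pi_{\bv,\bw}\|_\infty$ replaces the paper's enumeration of $\bx\in\{\pm 1\}^3$, and your Step~3 treats the $\bone$ case directly rather than as a special case.

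The 27-case check you left open does go through. The largest value of $v_i(\|\bw\|_1-2w_i)/\langle\bv,\bw\rangle$ is $\frac{4}{7}$, attained only for $\bv=(4,2,1)$, $\bw=(1,1,1)$, $i=0$, and the next largest value is $\frac{1}{2}$. One slip in your sample computation: $\langle(4,2,1),(2,2,1)\rangle=13$, so that value is $\frac{4}{13}$, not $\frac{4}{14}$.
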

\begin{proof}
Because $\bar B_{{[n-3,n+3)}}=(B_{\sigma_1}B_{\sigma_2}B_{\sigma_3})^2$ we have, by definition,  
\begin{equation}\label{eq:unbonenspan}
\begin{split}
\bu_n &\in
\,B_{\sigma_1}B_{\sigma_2}B_{\sigma_3}\mathbb{R}^3_{> 0} = 
\mathop{span}\,( ^t(4,2,1), \,^t(3,2,1),\,^t(2,1,1))_{> 0}, \\
 \bone_n &\in\,
^t(B_{\sigma_1}B_{\sigma_2}B_{\sigma_3})\mathbb{R}^3_{> 0} = 
\mathop{span}\,( ^t(4,3,2), \,^t(2,2,1),\,^t(1,1,1))_{> 0}.
\end{split}
\end{equation}
To prove the theorem we have to maximize $\| \pi_{\bu_n,\bone_n}(\bx) \|_\infty$ over all 
\begin{equation}\label{eq:conditions}
\bu_n \in
\,B_{\sigma_1}B_{\sigma_2}B_{\sigma_3}\mathbb{R}^3_{> 0},\; \bone_n \in\,
^t(B_{\sigma_1}B_{\sigma_2}B_{\sigma_3})\mathbb{R}^3_{> 0},\; \bx \text{ with }\|\bx\|_\infty=1.
\end{equation}
This is equivalent to optimizing
\begin{equation}\label{eq:homogeneity}
x_i - (x_1v_1+x_2v_2+x_3v_3)(u_1v_1+u_2v_2+u_3v_3)^{-1}u_i
\end{equation}
separately under \eqref{eq:conditions}
for each $i\in \{1,2,3\}$; 
$\bu_n=\,^t(u_1,u_2,u_3)$, $\bone_n=\,^t(v_1,v_2,v_3)$, $\bx=(x_1,x_2,x_3)$. 
Fix $i\in\{1,2,3\}$. Because the problem is linear in $\bx$ the extrema can be attained only for the vectors $\bx =(\pm1,\pm1,\pm1)$ (signs can be chosen independently). By homogeneity, in \eqref{eq:homogeneity} we may assume w.l.o.g.\ that $u_i=1$. Thus, subtracting $x_i$ in \eqref{eq:homogeneity}, it remains to optimize 
\begin{equation}\label{eq:quot1}
\frac{x_1v_1+x_2v_2+x_3v_3}{u_1v_1+u_2v_2+u_3v_3}.
\end{equation}
Because the reciprocal of \eqref{eq:quot1} is linear in $\bu_n$, for fixed $\bx,\bone_n$, the extrema are attained for $\bu_n$ in the extremal rays of the cone $B_{\sigma_1}B_{\sigma_2}B_{\sigma_3}\mathbb{R}^3_{> 0}$. After rescaling $\bu_n$, which does not change $\pi_{\bu_n,\bone_n}$, we may therefore assume that the maximum of $\| \pi_{\bu_n,\bone_n}(\bx) \|_\infty$ is attained for $\bu_n\in \{ ^t(4,2,1), \,^t(3,2,1),\,^t(2,1,1)\}$. But for each of the finitely many constellations $(\bu_n,\bx)$ we may, again by homogeneity, scale $\bone_n$ in \eqref{eq:homogeneity} in a way that $\langle \bu_n,\bone_n \rangle =1$. Thus it remains to optimize $x_1v_1+x_2v_2+x_3v_3$
which is linear in $\bone_n$, under the (also linear) condition $u_1v_1+u_2v_2+u_3v_3=1$. Thus the extremum of this linear optimization problem in $\bone_n$ is attained for 
$\bone_n$ in the extremal rays of the cone $
^t(B_{\sigma_1}B_{\sigma_2}B_{\sigma_3})\mathbb{R}^3_{> 0}$.
Rescaling $\bone_n$, we may therefore assume that the maximum of $\| \pi_{\bu_n,\bone_n}(\bx) \|_\infty$ is attained for $\bone_n \in \{ ^t(4,3,2), \,^t(2,2,1),\,^t(1,1,1)\}$. Computing $\| \pi_{\bu_n,\bone_n}(\bx) \|_\infty$ for the $2^3\cdot 3^2$ constellations of $(\bu_n,\bone_n,\bx)$,  
we get $\| \pi_{\bu_n,\bone_n}(\bx) \|_\infty \le \frac{11}7$ under the conditions \eqref{eq:conditions} and, hence, $\| \pi_{\bu_n,\bone_n} \|_\infty \le \frac{11}7$.
The estimate $\| \pi_{\bu_n,\bone}(\bx) \|_\infty \le \frac{11}7$ follows as a special case from the proof.
\end{proof}

The following lemma shows that the projections that we need to estimate in order to control balancedness according to \Cref{lem:sadiclem41} are bounded by operator norms of the matrices $\bar B_{{[m,n)}}$ when acting on certain hyperplanes.

\begin{lemma}\label{lem:threeprojOne}
Let $\btau\in\{\sigma_1,\sigma_2,\sigma_3,\sigma_4\}^\N$ be a directive sequence with generalized right eigenvector $\bu$ and with sequence of incidence matrices $(\bar B_n)_{n\in\N}$.
 Let $\bv\in \mathbb{R}_{\ge 0}^3 \setminus\{\mathbf{0}_3\}$ and $m,n\in\N$ with $m<n$ be given in a way that $\bar B_{{[n-3,n+3)}}=(B_{\sigma_1}B_{\sigma_2}B_{\sigma_3})^2$. If $\Vert \pi_{\bu_n,\bone}(\bv) \Vert_\infty \le C$ holds for some $C>0$, then $\Vert \pi_{\bu_{m},\bone}( \bar B_{{[m,n)}}\bv) \Vert_\infty \le \frac{22}7 C \Vert \bar B_{{[m,n)}}|_{\bone_{n}^\bot} \Vert_\infty$.
\end{lemma}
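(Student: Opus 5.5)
The plan is to decompose $\pi_{\bu_m,\bone}(\bar B_{[m,n)}\bv)$ through the intermediate projection $\pi_{\bu_n,\bone_n}$ and use the intertwining relation $\bar B_{[m,n)}\circ\pi_{\bu_n,\bone_n}=\pi_{\bu_m,\bone_m}\circ\bar B_{[m,n)}$ stated just before Lemma~\ref{lem:projcahnge}. Projections along the same vector compose nicely. Since $\pi_{\bu_m,\bone}$ and $\pi_{\bu_m,\bone_m}$ both project along $\bu_m$, we have $\pi_{\bu_m,\bone}\circ\pi_{\bu_m,\bone_m}=\pi_{\bu_m,\bone}$: the difference $\bx-\pi_{\bu_m,\bone_m}\bx$ is a multiple of $\bu_m$, which is killed by $\pi_{\bu_m,\bone}$. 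Also $\bar B_{[m,n)}\bu_n=\bu_m$. Hence
\[
\pi_{\bu_m,\bone}(\bar B_{[m,n)}\bv)=\pi_{\bu_m,\bone}\bigl(\pi_{\bu_m,\bone_m}\bar B_{[m,n)}\bv\bigr)=\pi_{\bu_m,\bone}\bigl(\bar B_{[m,n)}\pi_{\bu_n,\bone_n}\bv\bigr).
\]
Similarly $\pi_{\bu_n,\bone_n}\bv=\pi_{\bu_n,\bone_n}(\pi_{\bu_n,\bone}\bv)$, since $\bv$ and $\pi_{\bu_n,\bone}\bv$ differ by a multiple of $\bu_n$.

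Set $\bw=\pi_{\bu_n,\bone_n}(\pi_{\bu_n,\bone}\bv)\in\bone_n^\bot$. By Lemma~\ref{lem:projcahnge}, whose hypothesis $\bar B_{[n-3,n+3)}=(B_{\sigma_1}B_{\sigma_2}B_{\sigma_3})^2$ is exactly ours, $\|\bw\|_\infty\le\frac{11}7\|\pi_{\bu_n,\bone}\bv\|_\infty\le\frac{11}7C$. Since $\bw\in\bone_n^\bot$, we get
\[
\|\bar B_{[m,n)}\bw\|_\infty\le\bigl\|\bar B_{[m,n)}|_{\bone_n^\bot}\bigr\|_\infty\,\tfrac{11}7C.
\]
Moreover $\bar B_{[m,n)}\bw\in\bar B_{[m,n)}\bone_n^\bot=\bone^\bot$, using $\bone_n^\bot=\bar B_{[0,n)}^{-1}\bone^\bot$ and the analogous identity at level $m$, which gives $\bar B_{[m,n)}\bone_n^\bot=\bone_m^\bot$. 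So the image lies in $\bone_m^\bot$, not necessarily in $\bone^\bot$, and a final application of $\pi_{\bu_m,\bone}$ is needed.

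That last step is where the remaining factor $2$ must come from, and it is the main obstacle. We need $\|\pi_{\bu_m,\bone}\|_\infty\le 2$ on the relevant vectors, but no combinatorial condition is assumed at $m$. The trick I would try is that for any $\bu\in\mathbb{R}^3_{\ge0}\setminus\{\mathbf 0_3\}$, the map $\pi_{\bu,\bone}(\by)=\by-\frac{\langle\by,\bone\rangle}{\langle\bu,\bone\rangle}\bu$ has $\infty$-norm at most $2$. Indeed, with $\bu$ normalized so that $\|\bu\|_1=1$, each coordinate is $y_i-u_i\sum_j y_j$, bounded by $|y_i|+u_i\cdot3\|\by\|_\infty$. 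That crude bound only gives $4$. Using instead $y_i(1-u_i)-u_i\sum_{j\ne i}y_j$, the coordinate is bounded by $(1-u_i)+u_i\cdot 2$, which still exceeds $2$ in general.

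So rather than the crude row-sum estimate, I would use the exact operator norm. The $\infty$-norm of $I-\bu\bone^t$ is the maximal row sum $\max_i\bigl(|1-u_i|+2u_i\bigr)=\max_i(1+u_i)\le2$, since $0\le u_i\le1$. This is applied to $\by=\bar B_{[m,n)}\bw$ and yields the total constant $2\cdot\frac{11}7=\frac{22}7$, as claimed.
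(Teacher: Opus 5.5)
Your proof is correct and follows the paper's argument. Like the paper, you pass to the component $\bz=\pi_{\bu_n,\bone_n}(\bv)\in\bone_n^\bot$ (the paper writes $\bv=\bz+\alpha\bu_n$ and uses $\bar B_{[m,n)}\bu_n=\bu_m$), bound $\|\bz\|_\infty\le\frac{11}7 C$ via Lemma~\ref{lem:projcahnge}, apply the restricted operator norm, and finish with $\|\pi_{\bw,\bone}\|_\infty\le 2$ for nonnegative $\bw$.

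One correction to your exposition: your remark that $(1-u_i)+2u_i$ ``still exceeds $2$'' is a slip. That quantity equals $1+u_i\le 2$, which is exactly the row-sum bound you then use. The bound also needs $\bu_m\ge 0$, which holds because $\bu$ is a generalized right eigenvector.
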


\begin{proof}
Represent $\bv = \bz + \alpha\bu_n$ with $\bz\in \bone_n^\bot$. Then $\bar B_{{[m,n)}}\bv = \bar B_{{[m,n)}}\bz + \alpha\bu_{m}$ with $\bar B_{{[m,n)}}\bz\in \bone_m^\bot$, and we get $\Vert \pi_{\bu_{n},\bone}(\bz) \Vert_\infty=\Vert \pi_{\bu_{n},\bone}(\bv) \Vert_\infty$ and $\Vert \pi_{\bu_{m},\bone}(\bar B_{{[m,n)}}\bz) \Vert_\infty=\Vert \pi_{\bu_{m},\bone}(\bar B_{{[m,n)}}\bv) \Vert_\infty$. Thus we may assume w.l.o.g.\ that $\bv \in \bone_n^\bot$. 
By assumption and using \Cref{lem:projcahnge} we gain
\[
\Vert \bv\Vert_\infty = \Vert \pi_{\bu_{n},\bone_n}(\bv) \Vert_\infty  \le \frac{11}7\Vert \pi_{\bu_{n},\bone}(\bv) \Vert_\infty \le \frac{11}7 C.
\]
Because $\Vert \pi_{\bw,\bone} \Vert_\infty \le 2$ holds for each 
$\bw\in  \mathbb{R}_{\ge 0}^3 \setminus\{\mathbf{0}_3\}$ this  yields
\[
\begin{split}
\Vert \pi_{\bu_{m},\bone}(\bar B_{{[m,n)}}\bv) \Vert_\infty 
\le 2 \Vert \bar B_{{[m,n)}}|_{\bone_{n}^\bot} \Vert_\infty \cdot \Vert \bv\Vert_\infty 
\le \frac{22}7 C \Vert \bar B_{{[m,n)}}|_{\bone_{n}^\bot} \Vert_\infty, 
\end{split}
\]
and the lemma is proved. 
\end{proof}

According to the previous lemma, in order to control balancedness, we need to bound the operator norm of the products $\bar B_{{[m,n)}}$ on certain hyperplanes. This is done in the subsequent lemmas. The block $(\sigma_1,\sigma_2,\sigma_3)$ occurring in the statement of \Cref{th:balance} plays a decisive role here. Indeed, as seen in the next lemma, for this block the matrices $\bar B_{{[m,n)}}$ have particularly good contraction properties on the relevant hyperplanes.

\begin{lemma}[{{\em cf.}~\cite[Lemma~6]{Delecroix-Hejda-Steiner}}]
\label{lem:DHSLem6}
Let $\btau\in\{\sigma_1,\sigma_2,\sigma_3,\sigma_4\}^\N$ be a sequence of substitutions 
with sequence of incidence matrices $(\bar B_n)_{n\in\N}$. If $\bar B_{{[n-3,n+3)}} = (B_{\sigma_1}B_{\sigma_2}B_{\sigma_3})^2$ we have 
\begin{equation}\label{eq:57}
\Vert (\bar B_{{[n,n+3)}})|_{\bone_{n+3}^\bot} \Vert_\infty \le \frac{5}{7}.
\end{equation}
\end{lemma}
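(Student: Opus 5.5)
The plan is to reduce \eqref{eq:57} to a finite computation, in the spirit of the proof of \Cref{lem:projcahnge}. The first step is to compute the block matrix
\[
M:=\bar B_{[n,n+3)}=B_{\sigma_1}B_{\sigma_2}B_{\sigma_3}=\begin{pmatrix}4&3&2\\2&2&1\\1&1&1\end{pmatrix},
\]
whose columns and rows are the generators listed in \eqref{eq:unbonenspan}.

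Next I locate the normal vector $\bone_{n+3}$. Since $\bar B_{[n-3,n)}=M$, we have $\bone_n={}^tM\,{}^t\bar B_{[0,n-3)}\bone\in{}^tM\,\mathbb{R}^3_{>0}$. Hence
\[
\bone_{n+3}={}^tM\,\bone_n=({}^tM)^2\,\bone_{n-3}\in {}^tM\,\mathbb{R}^3_{>0}.
\]
So it suffices to prove $\|M|_{\bw^\bot}\|_\infty\le\frac57$ for every $\bw={}^tM\bv$ with $\bv\in\mathbb{R}^3_{>0}$.

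For $\bx\in\bw^\bot$ we have $\langle M\bx,\bv\rangle=\langle\bx,\bw\rangle=0$, so $M\bx$ lies in a plane with positive normal. The quantity to bound is the maximum of the convex function $\bx\mapsto\|M\bx\|_\infty$ over the polygon $P_\bw=\bw^\bot\cap[-1,1]^3$. This maximum is attained at a vertex of $P_\bw$. Such vertices lie on edges of the cube, so they have the form $\bx=(\varepsilon_j,\varepsilon_k,t)$ up to permutation of coordinates, with $\varepsilon_j,\varepsilon_k\in\{\pm1\}$ and $t=-(w_j\varepsilon_j+w_k\varepsilon_k)/w_i\in[-1,1]$.

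For a fixed sign pattern and fixed index $i$, the vector $M\bx$ depends affinely on $t$. The attainable values of $t$, as $\bw$ ranges over the cone ${}^tM\mathbb{R}^3_{>0}$, form an interval. The endpoints of this interval are attained, or approached, when $\bw$ lies on an extremal ray of the cone, or on the boundary of the region where $|t|\le1$; in the latter case $t=\pm1$ and $\bx$ is a cube vertex.

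Because $\|M\bx\|_\infty$ is convex in $t$, its supremum over that interval is attained at an endpoint. Consequently the supremum over the whole cone is a maximum over finitely many constellations: $\bw\in\{{}^t(4,3,2),{}^t(2,2,1),{}^t(1,1,1)\}$, the three choices of the free coordinate $i$, the four sign patterns, and the cube vertices lying in some admissible plane. In each of these cases I compute $\|M\bx\|_\infty$ directly and check that it is at most $\frac57$.

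The main obstacle is the justification that restricting $\bw$ to the extremal rays loses nothing. The vertex $\bx$ depends on $\bw$ in a linear-fractional way, so this is not a purely linear optimization. I intend to handle it as described above, via monotonicity of $t$ in the ratios $w_j/w_i$ and $w_k/w_i$ on the cone, combined with convexity in $t$.

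If this turns out to be delicate, the fallback is different. I would write $\bw={}^tM\bv$, parametrize $\bw^\bot$ directly, note that the problem is homogeneous in $\bv$, and bound $\|M\bx\|_\infty/\|\bx\|_\infty$ by a case analysis on which coordinate of $\bx$ attains the maximum. The finite numerical check, including identifying the extremal case that gives exactly $\frac57$, is routine once the reduction is in place.
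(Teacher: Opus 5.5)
The paper gives no written proof of this lemma; it only refers to \cite[Lemma~6]{Delecroix-Hejda-Steiner}. Judged on its own, your plan has a fatal gap in the reduction step.

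From $\bone_{n+3}=({}^tM)^2\bone_{n-3}$ you keep only $\bone_{n+3}\in{}^tM\,\mathbb{R}^3_{>0}$. Over that larger cone the bound $\frac57$ is false. Take $\bw={}^t(1,1,1)$, which is one of the extremal rays on your own list, and $\bx={}^t(1,0,-1)\in\bw^\bot$. Then $M\bx={}^t(2,1,0)$, so $\Vert M|_{\bw^\bot}\Vert_\infty\ge 2$. By continuity the same happens, up to $\varepsilon$, for interior points $\bw={}^tM\,{}^t(\varepsilon,\varepsilon,1)={}^t(1+6\varepsilon,1+5\varepsilon,1+3\varepsilon)$.

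Even $\bw={}^t(2,2,1)$ fails: the vertex ${}^t(-\frac12,1,-1)$ has image ${}^t(-1,0,-\frac12)$, of norm $1$. So the finite check you describe would fail. Your fallback also only uses $\bv\in\mathbb{R}^3_{>0}$, so it aims at the same false statement.

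The information you discarded is the essential one. The first half of the hypothesis, $\bar B_{[n-3,n)}=M$, gives $\bone_n\in{}^tM\,\mathbb{R}^3_{>0}$, and that is why the lemma assumes $\bar B_{[n-3,n+3)}=M^2$ rather than just $\bar B_{[n,n+3)}=M$.

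The repair is to keep $\bone_{n+3}\in({}^tM)^2\mathbb{R}^3_{>0}$. This is the cone spanned by the rows ${}^t(24,20,13)$, ${}^t(13,11,7)$, ${}^t(7,6,4)$ of $M^2$. On it $w_1>w_2>w_3$ and $w_1<w_2+w_3$, so $\bw^\bot\cap[-1,1]^3$ is a hexagon with vertices $\pm{}^t(1,-1,-a)$, $\pm{}^t(1,-b,-1)$, $\pm{}^t(-c,1,-1)$. Here $a=\frac{w_1-w_2}{w_3}$, $b=\frac{w_1-w_3}{w_2}$, $c=\frac{w_2-w_3}{w_1}$.

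The matrix $M$ sends these vertices to $\pm{}^t(1-2a,-a,-a)$, $\pm{}^t(2-3b,1-2b,-b)$, $\pm{}^t(1-4c,1-2c,-c)$. Each parameter is a weighted average of its values on the three generators. Hence $a\in[\frac14,\frac4{13}]$, $b\in[\frac12,\frac{11}{20}]$, $c\in[\frac27,\frac4{13}]$.

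The corresponding norms are convex in the parameter, so they are at most $\frac12$, $\frac{11}{20}$ and $\frac37$ respectively. This gives $\Vert M|_{\bone_{n+3}^\bot}\Vert_\infty\le\frac{11}{20}\le\frac57$.

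So with the correct cone your vertex and extremal-ray argument goes through. Note, however, that no extremal configuration gives exactly $\frac57$, contrary to what you expected.
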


For all the other blocks we just need to make sure that the matrices $\bar B_{{[m,n)}}$ do not expand too much. This is established now. We start with two preparatory lemmas.

\begin{lemma} \label{lem:word}
Let $\bx \in \N^3$ be given. 
Then there exists a word $w \in \cA^*$ such that $\mathbf{l}(w) = \bx$ and  $\|\pi_{\bx,\bone} \mathbf{l}(p)\|_\infty \le 1$ for each prefix $p$ of $w$. 
\end{lemma}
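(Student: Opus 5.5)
We construct $w$ greedily, one letter at a time. Write $\bx=\,^t(x_1,x_2,x_3)$ and $N=x_1+x_2+x_3$; the case $\bx=\mathbf{0}_3$ is trivial (take $w=\epsilon$), so assume $N\ge1$. For a prefix $p$ of length $k$ we have
\[
\pi_{\bx,\bone}(\mathbf{l}(p))=\mathbf{l}(p)-\frac{k}{N}\bx ,
\]
so the $i$-th coordinate is the discrepancy $|p|_i-\frac{k}{N}x_i$. The aim is to keep all three discrepancies in $[-1,1]$ for $k=0,\ldots,N$. At $k=0$ and $k=N$ they vanish, the latter because $\mathbf{l}(w)=\bx$ is forced once we use exactly $x_i$ copies of letter $i$.

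The rule is the standard ``most behind'' (earliest deadline) choice. Having built a prefix $p$ of length $k<N$, append a letter $i$ with $|p|_i<x_i$ that minimizes $|p|_i-\frac{k+1}{N}x_i$, i.e.\ the letter that will lag most behind its target after the next step. Equivalently, think of letter $i$ as needing its $j$-th occurrence by the deadline $jN/x_i$, and schedule the $N$ unit jobs at times $1,\ldots,N$ by earliest deadline.

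The key step is to verify the two bounds. For the upper bound $|p|_i-\frac kN x_i\le1$: letter $i$ is only appended when its discrepancy before appending is the minimum over all letters. The discrepancies sum to $0$, so this minimum is $\le 0$; after appending it increases by $1-\frac{x_i}N\le 1$, hence stays $\le1$. Letters not chosen only decrease. For the lower bound $>-1$ we use the scheduling interpretation. Job $j$ of letter $i$ has release time $(j-1)N/x_i$ and deadline $jN/x_i$. For any interval $[a,b]$, the number of jobs whose release and deadline both lie in $[a,b]$ is at most $\sum_i\lfloor (b-a)x_i/N\rfloor\le b-a$. By the classical feasibility result for unit jobs (Hall's condition, equivalently optimality of EDF), EDF meets all deadlines, i.e.\ for each $i$ and $k$ we have $|p_k|_i\ge\lfloor kx_i/N\rfloor>\frac kN x_i-1$. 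Symmetrically, meeting release times gives the upper bound again.

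The main obstacle is making the lower bound rigorous cleanly. A direct potential argument runs as follows. Suppose some letter first drops to discrepancy $\le -1$ at step $k$. Take the last time $t<k$ a letter with deadline later than the violated deadline was scheduled, and count the jobs that must fit in $(t,k]$; this gives more than $k-t$ jobs, contradicting the counting bound above. An alternative that avoids scheduling language is the known fact (e.g.\ via Tijdeman's chairman assignment theorem) that any frequency vector admits a sequence with discrepancy $<1$. We would cite or reprove it in this three-letter form, and conclude $\|\pi_{\bx,\bone}\mathbf{l}(p)\|_\infty\le1$ for every prefix.
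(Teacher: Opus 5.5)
\textbf{The gap: your rule is not EDF.} Your construction can be made to work. As written, however, the lower bound (which you rightly call the crux) is not established for the word your rule produces, because your rule is \emph{not} earliest-deadline-first.

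Write $c_i=|p|_i-\frac{k+1}{N}x_i$. You append a letter minimizing $c_i$. EDF instead appends the letter minimizing the deadline $(|p|_i+1)N/x_i=(k+1)+N(1+c_i)/x_i$ of its next occurrence, i.e.\ it minimizes $(1+c_i)/x_i$. These two orderings differ as soon as the $x_i$ differ. For $\bx={}^t(2,5,1)$ both rules start with the letter $2$. Next, your rule picks $1$, since $c=(-\frac12,-\frac14,-\frac14)$. EDF picks $2$, since the deadlines are $\frac{16}{5}<4<8$. So your ``Equivalently'' is false, and both the Hall/EDF-optimality argument and the exchange argument in your last paragraph are about a different word.

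There is also a small slip in the upper bound. The chosen letter minimizes $c_i$, not the current discrepancy $|p|_i-\frac kN x_i$. The correct justification is that $\sum_i c_i=-1$, so $\min_i c_i\le-\frac13$, and the new discrepancy $c_i+1$ is at most $\frac23$. A letter with $|p|_i=x_i$ has $c_i\ge 0$, so the admissibility restriction never affects the choice.

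\textbf{Fix 1: keep your rule.} With three letters the gap closes in two lines. By the corrected upper bound, all discrepancies are always $\le\frac23$. Suppose at some step a letter $j$ that is not chosen had $c_j<-1$. Then the chosen letter $i$ would have $c_i\le c_j<-1$. The third letter $l$ would then satisfy $c_l=-1-c_i-c_j>1$, contradicting $c_l\le |p|_l-\frac kN x_l\le\frac23$. The discrepancy of the chosen letter only increases. Hence all discrepancies stay in $[-1,\frac23]$, which is the lemma.

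\textbf{Fix 2: use Hall's theorem directly.} Drop the greedy rule and put the $j$-th occurrence of letter $i$ at an integer position in $[(j-1)N/x_i,\,jN/x_i]\cap[1,N]$. An interval count like yours, done for the integer positions $a,\dots,b$, verifies Hall's condition. Occurrences of the same letter come out in order automatically, since consecutive windows share only an endpoint. The release times and deadlines then give the upper and lower bounds at once.

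\textbf{Comparison with the paper.} The paper gives no argument at all; it simply cites Meijer and Tijdeman, which is your fallback option. Either fix above turns your sketch into a self-contained proof.
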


\begin{proof}
This is a special case of a result proved in \cite{Meijer:73,Tijdeman:80}.
\end{proof}

Let $X,Y\subset \mathbb{R}^3$. We say that $X$ is relatively dense in $Y$ with denseness constant $K>0$, if $Y$ can be covered by balls (w.r.t.\ the $\|\cdot\|_\infty$-norm) of radius $K$ centered at $X$.

\begin{lemma}\label{lem:claim1}
Let $\btau\in\{\sigma_1,\sigma_2,\sigma_3,\sigma_4\}^\N$ be a sequence of substitutions with generalized right eigenvector~$\bu$ and let $C>0$. If $n\in\N$ satisfies $\bar B_{{[n-3,n+3)}} = (B_{\sigma_1}B_{\sigma_2}B_{\sigma_3})^2$, then there exists a $\frac{44}7C$-letter balanced word $v=v_0v_1\cdots \in\cA^\N$ with frequency vector $\bu_n$ such that the set $\{\pi_{\bu_n,\bone_n}(\mathbf{l}(v_0\cdots v_\ell)) : \ell \ge 0\}$ is relatively dense in 
$\{\bz\in \bone_n^\bot\;:\; \|\bz\|_\infty \le C\}$,
with a denseness constant $K$ not depending on $C$. 
\end{lemma}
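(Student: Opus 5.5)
The idea is to build $v$ as a concatenation of finite blocks. Each block is supplied by \Cref{lem:word}. Prefix abelianizations of a block stay within distance $1$ of the segment towards a chosen target lattice point. We then steer the projected walk $\pi_{\bu_n,\bone_n}(\mathbf{l}(v_0\cdots v_\ell))$ so that it visits, one after another, points near every element of a fixed grid in the ball $\{\bz\in\bone_n^\bot : \|\bz\|_\infty\le C\}$, while drifting in direction $\bu_n$.

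Concretely, first fix a finite $1$-net $G\subset\{\bz\in \bone_n^\bot:\|\bz\|_\infty\le C\}$ and list it cyclically as $g_1,g_2,\ldots$ (repeating forever). Set $\bx_0=\mathbf 0$. Then choose inductively lattice points $\bx_k\in\N^3$ with $\bx_k-\bx_{k-1}\in\N^3$ and
\[
\|\pi_{\bu_n,\bone_n}(\bx_k)-g_k\|_\infty\le K_0 .
\]
This is possible because the lattice $\mathbb Z^3$ shifted far along the positive direction $\bu_n$ projects onto a $K_0$-dense subset of $\bone_n^\bot$. Here $\bu_n$ is strictly positive, since it lies in the open cone $B_{\sigma_1}B_{\sigma_2}B_{\sigma_3}\mathbb{R}^3_{>0}$ by \eqref{eq:unbonenspan}, and $\pi_{\bu_n,\bone_n}$ has norm at most $\frac{11}7$ by \Cref{lem:projcahnge}. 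Thus $K_0$ depends only on that norm bound. Apply \Cref{lem:word} to $\bx_k-\bx_{k-1}$ to get a word $w_k$, and put $v=w_1w_2\cdots$. By construction the projected prefixes pass within $K_0$ of every $g\in G$ (and infinitely often), so relative denseness holds with $K=K_0+1$, independent of $C$.

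Next, balancedness. For a prefix $p=w_1\cdots w_{k-1}p'$ with $p'$ a prefix of $w_k$, write
\[
\mathbf{l}(p)=\bx_{k-1}+\mathbf{l}(p').
\]
By \Cref{lem:word}, $\mathbf{l}(p')$ lies within $\infty$-distance $1$ of $\mathbb R(\bx_k-\bx_{k-1})$, hence within a bounded distance of the segment $[\bx_{k-1},\bx_k]$. Projecting with $\pi_{\bu_n,\bone}$ (norm $\le\frac{11}7$ by \Cref{lem:projcahnge}) and using convexity gives
\[
\|\pi_{\bu_n,\bone}(\mathbf{l}(p))\|_\infty\le \tfrac{11}7 (C+O(1)).
\]
To absorb constants we can require $C\ge1$; for small $C$ we instead use a coarser grid, which only changes $K$. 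The second part of \Cref{lem:sadiclem41}(ii) then yields that $v$ is $\frac{44}7C$-letter balanced with frequency vector $\bu_n$ after normalization. The frequency claim also follows because the projections stay bounded while $|p|\to\infty$.

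The main obstacle is bookkeeping. One has to pass between $\pi_{\bu_n,\bone_n}$, $\pi_{\bu_n,\bone}$ and the projection $\pi_{\bx_k-\bx_{k-1},\bone}$ used in \Cref{lem:word}, while keeping the bound exactly $\frac{11}7C$ rather than $\frac{11}7C+O(1)$. To get this I would choose the increments $\bx_k-\bx_{k-1}$ large and nearly parallel to $\bu_n$, so their deviation is absorbed. Failing that, I would shrink the target ball to radius $C-O(1)$ and enlarge $K$ accordingly, which is allowed since $K$ need not depend on $C$.
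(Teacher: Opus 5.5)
Your proposal is essentially the paper's proof. The paper also concatenates blocks from \Cref{lem:word} between lattice points $\bx_i$ (with $\bx_0=0$ and increments in $\N^3$) whose $\pi_{\bu_n,\bone_n}$-projections are relatively dense. It gets the exact constant by your second fix, taking the $\bx_i$ in $S=\{\bx\in\N^3:\|\pi_{\bu_n,\bone_n}(\bx)\|_\infty<C-2\|\pi_{\bu_n,\bone_n}\|_\infty\}$, and then passes to $\pi_{\bu_n,\bone}=\pi_{\bu_n,\bone}\circ\pi_{\bu_n,\bone_n}$ with the factor $\frac{11}7$ and \Cref{lem:sadiclem41}(ii). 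Two small corrections: near-parallel increments alone do not remove the additive $O(1)$, since the deviation allowed by \Cref{lem:word} does not shrink; and the ``coarser grid for small $C$'' idea cannot work, because it only changes $K$ and not the balance constant. Indeed, for $\frac{44}7C<1$ the statement is false: balancedness below $1$ forces $v$ to be constant, which is incompatible with the positive frequency vector $\bu_n$. So, exactly as in the paper, one needs $C$ large, which is the only case used in \Cref{lem:Mbound}.
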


\begin{proof}
By Lemma~\ref{lem:word}, for $\bx,\by\in \N^3$ with $\by-\bx\in \N^3$ there is a word $w\in\cA^*$ such that $\mathbf{l}(w) = \by-\bx$ and  $\|\pi_{\by-\bx,\bone_n} \mathbf{l}(p)\|_\infty \le \|\pi_{\by-\bx,\bone_n}\|_\infty$ for each prefix $p$ of $w$. 
From the definition of $\pi_{\by-\bx,\bone_n}$ in \eqref{eq:projections} we gain
\[
\bx+\mathbf{l}(p)=
\bx + \frac{\langle \mathbf{l}(p),\bone_n\rangle}{\langle \by-\bx,\bone_n\rangle}(\by-\bx) + \pi_{\by-\bx,\bone_n}(\mathbf{l}(p)).
\]
Because
$0\le \langle \mathbf{l}(p),\bone_n\rangle \le \langle \by-\bx,\bone_n\rangle$
this implies that
\begin{equation}\label{eq:narrowBL}
\begin{split}
\|\pi_{\bu_n,\bone_n}(\bx+\mathbf{l}(p))\|_\infty &\le
\|\max\{\pi_{\bu_n,\bone_n}(\bx + \lambda(\by-\bx))\;:\; 0\le \lambda\le 1\}\|_\infty\\
&\qquad\qquad\qquad\qquad\qquad\qquad
+\| \pi_{\bu_n,\bone_n} \circ \pi_{\by-\bx,\bone_n}(\mathbf{l}(p))\|_\infty
\\
&\le \max\{\|\pi_{\bu_n,\bone_n}(\bx)\|_\infty, \|\pi_{\bu_n,\bone_n}(\by)\|_\infty\} + \|\pi_{\by-\bx,\bone_n}\|_\infty
\end{split}
\end{equation}
(note that $\pi_{\bu_n,\bone_n}$ is the identity on $\bone_n^\bot$).

Define 
$
S=\{\bx \in \N^3 \,:\, \|\pi_{\bu_n,\bone_n}(\bx)\|_\infty < C-2\|\pi_{\bu_n,\bone_n}\|_\infty \}
$.
Because $S$ is relatively dense in the cylinder $Z=\{\bx \in\mathbb{R}^3_{\ge 0} \,:\, \|\pi_{\bu_n,\bone_n}(\bx)\|_\infty < C \}$ with a denseness constant not depending on $C$, the projection 
$\pi_{\bu_n,\bone_n}(S \cap \{\bx: \langle \bx, \bu_n\rangle > N\})$ is relatively dense in 
$\pi_{\bu_n,\bone_n}(Z)=\{z \in \bone_n^\bot \,:\, \|z\|_\infty < C \}$ for each $N\in \N$ with a denseness constant not depending on $N$ and $C$.
Thus there is sequence $(\bx_i)_{i\in \N}\subset S$ with $\bx_0=0$ such that $\pi_{\bu_n,\bone_n}((\bx_i)_{i\in \N})$ is relatively dense in $\{\bz \in \bone_n^\bot \,:\, \|z\|_\infty < C \}$ with a denseness constant $K$ not depending on $C$ and $\bx_i-\bx_{i-1} \in \N^3$ points in a direction close to $\bu_n$ in the sense that $\|\pi_{\bx_i-\bx_{i-1},\bone_n}\|_\infty < 2\|\pi_{\bu_n,\bone_n}\|_\infty$ ($i\ge 1$)\footnote{The properties of $\bx_i-\bx_{i-1}$ can be achieved by choosing the  vectors $\bx_i$ far apart from each other in the cylinder $Z$, {\it i.e.},  $\bx_i$ lies much ``higher'' than $\bx_{i-1}$ in this cylinder (w.r.t.\ the ``height'' $\langle \bx, \bu_n\rangle$).}. By \eqref{eq:narrowBL} and the properties of $(\bx_i)_{i\in \N}$ for each $i\ge 1$ there exists a word $w_i\in \cA^*$ with $\mathbf{l}(w_i)=\bx_i-\bx_{i-1}$ such that 
\[
\|\pi_{\bu_n,\bone_n}(\bx_{i-1}+\mathbf{l}(p))\|_\infty \le \max\{\|\pi_{\bu_n,\bone_n}(\bx_{i-1})\|_\infty, \|\pi_{\bu_n,\bone_n}(\bx_i)\|_\infty\} + \|\pi_{\bx_i-\bx_{i-1},\bone_n}\|_\infty \le C
\]
for each prefix $p$ of $w_i$.
Thus the concatenation $v=w_1w_2\cdots\in\cA^\N$ satisfies
\begin{equation}\label{eq:xipC}
\{\pi_{\bu_n,\bone_n}(\bx_i)\;:\; i\in \N\} \subset
\{\pi_{\bu_n,\bone_n}(\mathbf{l}(p))\;:\; p \text{ prefix of } v \} \subset \{z \in \bone_n^\bot \;:\; \|z\|_\infty < C \}, 
\end{equation}
and, hence,
$\{\pi_{\bu_n,\bone_n}(\mathbf{l}(p))\,:\, p \text{ prefix of } v \}$ is relatively dense in $\{z \in \bone_n^\bot \,:\, \|z\|_\infty < C \}$ with denseness constant $K$. Moreover, using \Cref{lem:projcahnge} we gain from \eqref{eq:xipC} that
\[
\bigg\{\pi_{\bu_n,\bone}(\mathbf{l}(p))\;:\; p \text{ prefix of } v \} \subset \{z \in \bone^\bot \;:\; \|z\|_\infty < \frac{11}7C \bigg\}.
\]
Thus, by \Cref{lem:sadiclem41}~(ii), $v$ is $\frac{44}{7}C$-balanced with frequency vector~$\bu_n$.
\end{proof}

Now we state the desired bound for the operator norms.

\begin{lemma}\label{lem:Mbound}
Let $\btau\in\{\sigma_1,\sigma_2,\sigma_3,\sigma_4\}^\N$ be a sequence of substitutions with generalized right eigenvector $\bu$ and with sequence of incidence matrices $(\bar B_n)_{n\in\N}$.
For each $0\le m\le n$ with $m=0$ or $\bar B_{{[m-3,m+3)}}=(B_{\sigma_1}B_{\sigma_2}B_{\sigma_3})^2$ we have
\[
\Vert (\bar B_{{[m,n)}})|_{\bone_{n}^\bot} \Vert_\infty \le 10. 
\]
\end{lemma}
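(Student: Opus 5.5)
We will prove the bound by a combinatorial detour: translate the operator norm on $\bone_n^\bot$ into a statement about balancedness of images of words under $\tau_{[m,n)}$, and then control balancedness by \Cref{lem:sigma_ibalance}. The key observation is that \Cref{lem:sigma_ibalance} gives an \emph{additive} loss ($+4$ or $+2$) per substitution, independent of the length of the words, while the scaling of the projected abelianizations is linear. So large input balancedness $C$ forces the multiplicative constant to be close to $1$ up to lower order terms. This yields a uniform operator-norm bound, and the explicit constant $10$ comes from the constants $\frac{11}7$, $\frac{44}7$ and $2$ appearing in \Cref{lem:projcahnge}, \Cref{lem:claim1} and \Cref{lem:sadiclem41}.

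Concretely, fix $\bz\in\bone_n^\bot$ with $\|\bz\|_\infty\le 1$, and let $C$ be large. The case $m=0$ is handled by the same argument with the bound on projections read off directly, since $\bone_0=\bone$.

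The obstacle is that $n$ itself need not sit in a $(\sigma_1\sigma_2\sigma_3)^2$ window, yet \Cref{lem:claim1} requires it. The first attempt is to replace $\bu_n$ by an arbitrary positive direction. The proof of \Cref{lem:claim1} only uses a finite bound on $\|\pi_{\bu_n,\bone_n}\|_\infty$ and $\|\pi_{\bu_n,\bone}\|$, which affect only the denseness constant $K$ and additive terms, not the leading factor in $C$. Alternatively, one works directly with the word from \Cref{lem:word}, which is $2$-balanced on $\bone_n^\bot$.

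1. Build an infinite word $v$ over $\cA$ whose prefixes have projections $\pi_{\bu_n,\bone_n}(\mathbf l(p))$ relatively dense in the ball of radius $C$ in $\bone_n^\bot$, with $v$ being $C'$-letter balanced for $C'$ proportional to $C$. In particular some prefix $p$ satisfies $\pi_{\bu_n,\bone_n}\mathbf l(p)=C\bz+O(K)$.

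2. Apply $\tau_{[m,n)}$ and iterate \Cref{lem:sigma_ibalance}. The word $\tau_{[m,n)}(v)$ is $(C'+4(n-m))$-letter balanced. The length dependence must be removed. Here the approach is to exploit that between consecutive $(\sigma_1\sigma_2\sigma_3)$-windows the contraction of \Cref{lem:DHSLem6} absorbs the additive losses, or, more simply, to let $C\to\infty$ with $n-m$ fixed so that the additive term becomes negligible.

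3. Since $\bar B_{[m,n)}\bu_n=\bu_m$ and $\bar B_{[m,n)}$ intertwines the projections, the image satisfies $\pi_{\bu_m,\bone_m}\mathbf l(\tau_{[m,n)}(p))=C\bar B_{[m,n)}\bz+O(\|\bar B_{[m,n)}\|K)$.

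4. Convert balancedness back into a projection bound via \Cref{lem:sadiclem41}(ii) and \Cref{lem:projcahnge} at index $m$, which uses the window hypothesis at $m$. Divide by $C$ and let $C\to\infty$ to get $\|\bar B_{[m,n)}\bz\|_\infty\le c\cdot\frac{C'}{C}$, with $c$ a product of the constants $\frac{11}7$, $2$ and $4$ from these lemmas.

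5. Bookkeep the constants so that the final bound is at most $10$. This is tight-ish, and I expect it to require the sharper form of the $m$-side conversion, using $\pi_{\bu_m,\bone_m}$ directly on $\bone_m^\bot$ where it is the identity.
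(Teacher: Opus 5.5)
Your route is the paper's: take the word $v$ of \Cref{lem:claim1}, push it through $\tau_{[m,n)}$ paying only the additive $4(n-m)$ from \Cref{lem:sigma_ibalance}, convert back with \Cref{lem:sadiclem41}~(ii) and \Cref{lem:projcahnge} at $m$, approximate with a $C$-independent $K$, and let $C\to\infty$.

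You are right that \Cref{lem:claim1} needs the window $\bar B_{[n-3,n+3)}=(B_{\sigma_1}B_{\sigma_2}B_{\sigma_3})^2$ at $n$, which the statement does not supply. The paper's proof invokes \Cref{lem:claim1} at $n$ anyway. This is harmless for its use in \Cref{th:balanceLetter}, where the right endpoints $n=\ell_i$ carry such a window. With windows at both ends the constants are as follows:
\begin{itemize}
\item $v$ is $\frac{44}{7}C$-balanced, so $\tau_{[m,n)}(v)$ is $(\frac{44}{7}C+4(n-m))$-balanced;
\item the first bullet of \Cref{lem:sadiclem41}~(ii) bounds $\pi_{\bu_m,\bone}(\mathbf{l}(\cdot))$ by that same quantity, with no extra factor;
\item passing to $\pi_{\bu_m,\bone_m}$ costs $\frac{11}{7}$.
\end{itemize}
The leading constant is therefore $\frac{11}{7}\cdot\frac{44}{7}=\frac{484}{49}<10$.

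The gap is in how you handle the missing window at $n$. You claim that the bounds on $\Vert\pi_{\bu_n,\bone_n}\Vert_\infty$ and $\Vert\pi_{\bu_n,\bone}\Vert_\infty$ only affect $K$ and additive terms. That is false for the second one. The letter-balance constant of $v$ is $4\Vert\pi_{\bw,\bone}|_{\bone_n^\bot}\Vert_\infty\,C$; this is exactly where $\frac{44}{7}=4\cdot\frac{11}{7}$ in \Cref{lem:claim1} comes from, so it multiplies the leading term.

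For an arbitrary positive direction $\bw$ you only have $\Vert\pi_{\bw,\bone}\Vert_\infty\le 2$. That gives $\frac{11}{7}\cdot 4\cdot 2=\frac{88}{7}>10$, which is precisely the product of $\frac{11}{7}$, $2$ and $4$ you name in step~4. So step~5 cannot close.

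The sharpening you suggest there does not help either. Balancedness controls the vector $\pi_{\bu_m,\bone}(\mathbf{l}(\cdot))\in\bone^\bot$, while you need $\pi_{\bu_m,\bone_m}(\mathbf{l}(\cdot))=\pi_{\bu_m,\bone_m}\bigl(\pi_{\bu_m,\bone}(\mathbf{l}(\cdot))\bigr)$. You therefore pay the norm of $\pi_{\bu_m,\bone_m}$ on $\bone^\bot$, which is the $\frac{11}{7}$; the fact that it is the identity on $\bone_m^\bot$ is irrelevant.

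There are two ways to repair this.
\begin{itemize}
\item Add the window at $n$ as a hypothesis, as the paper effectively does.
\item Choose the auxiliary direction deliberately. Take $\bw=\bone$, for which $\Vert\pi_{\bone,\bone}\Vert_\infty=\frac43$. The intertwining $\bar B_{[m,n)}\pi_{\bw,\bone_n}=\pi_{\bar B_{[m,n)}\bw,\bone_m}\bar B_{[m,n)}$ holds for every $\bw$. For $n\ge m+3$ the vector $\bar B_{[m,n)}\bone$ lies in $B_{\sigma_1}B_{\sigma_2}B_{\sigma_3}\mathbb{R}^3_{>0}$, so the $\frac{11}{7}$ bound from \Cref{lem:projcahnge} still applies. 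This gives $\frac{11}{7}\cdot4\cdot\frac43=\frac{176}{21}<10$. The cases $n<m+3$ are trivial, since then $\Vert\bar B_{[m,n)}\Vert_\infty\le 5$.
\end{itemize}
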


\begin{proof}
Fix $m,n$ with $0\le m<n$ and let $C\in\mathbb{N}$ be arbitrary. 
Let $v\in \cA^\N$ be as in \Cref{lem:claim1}.
Then $\sigma_{[m,n)}(v)$ has frequency vector $\bu_m$ and is $(\frac{44}{7}C+4(m-n))$-letter balanced by \Cref{lem:sigma_ibalance}. By \Cref{lem:projcahnge} and \Cref{lem:sadiclem41}~(ii) the images $B_{[m,n)} \pi_{\bu_n,\bone_n}(\mathbf{l}(v_0\cdots v_k))$ satisfy 
\[
\begin{split}
\Vert \bar B_{{[m,n)}} \pi_{\bu_n,\bone_n}(\mathbf{l}(v_0\cdots v_k)) \Vert_\infty&=\Vert \pi_{\bu_m,\bone_m}(\mathbf{l}(\tau_{[m,n)}(v_0\cdots v_k))) \Vert_\infty
\\&\le \frac{11}{7}\cdot \Vert \pi_{\bu_m,\bone}(\mathbf{l}(\tau_{[m,n)}(v_0\cdots v_k))) \Vert_\infty
\le 10(C+(n-m))
 \end{split}
\]
(for the case $m=0$ the constant $\frac{11}7$ can even be replaced by $1$ because $\bone_0=\bone$).
By the properties of $v$ asserted in \Cref{lem:claim1} there is a $K>0$ independent of $C$ such that for each $\bx\in \bone_n^\bot$ with $\|\bx\|_\infty \le C$ there is an $\ell\ge 0$ with $\|\bx-\pi_{\bu_n,\bone_n}(\mathbf{l}(v_0\cdots v_\ell))\|_\infty \le K$. Thus,
\[
\begin{split}
 \|\bar B_{{[m,n)}} \bx\|_\infty & \le
 \|\bar B_{{[m,n)}}(\bx-\pi_{\bu_n,\bone_n}(\mathbf{l}(v_0\cdots v_\ell)))\|_\infty + \|\bar B_{{[m,n)}}(\pi_{\bu_n,\bone_n}(\mathbf{l}(v_0\cdots v_\ell)))\|_\infty \\
 & \le K \Vert \bar B_{{[m,n)}}|_{\bone_{n}^\bot}\Vert_\infty
 + 10(C+(n-m)) \\
 &= 10C\bigg(1 + \frac{K \Vert \bar B_{{[m,n)}}|_{\bone_{n}^\bot}\Vert_\infty+10(n-m)}{10C}\bigg).
\end{split}
\]
Because $\Vert \bar B_{{[m,n)}}|_{\bone_{n}^\bot}\Vert_\infty$ is finite and does not depend on $C$, this implies that for each $\varepsilon >0$ there is a $C>0$ such that each $\bx\in \bone_n^\bot$ with $\|\bx\|_\infty=C$ satisfies $\|\bar B_{{[m,n)}} \bx\|_\infty <10C+\varepsilon$. 
Since $C$ can be arbitrarily large this yields $\Vert \bar B_{{[m,n)}}|_{\bone_{n}^\bot}\Vert_\infty \le 10$ by the definition of the operator norm.
\end{proof}

We are now ready to prove letter balancedness for the relevant class of languages related to the Reverse algorithm. The following proposition concludes the main step of the proof of Theorem~\ref{th:balance}.

\begin{proposition}\label{th:balanceLetter}
If $\btau \in \{\sigma_1,\sigma_2,\sigma_3,\sigma_4\}^\N$ is a sequence of substitutions in which the word $({\sigma_1}{\sigma_2}{\sigma_3})^{9}$ has positive density, then $\cL_{\btau}$ is letter balanced.
\end{proposition}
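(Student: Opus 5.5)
The proof will go through the projection criterion \Cref{lem:sadiclem41}~(i). The aim is a uniform bound
\[
\Vert \pi_{\bu,\bone}(\mathbf{l}(w))\Vert_\infty \le C' \qquad\text{for all } w\in\cL_{\btau},
\]
where $\bu$ is the generalized right eigenvector; this gives $2C'$-letter balancedness.

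\emph{Step 1: primitivity and the eigenvector.} Since $(\sigma_1\sigma_2\sigma_3)^9$ has positive density, it occurs infinitely often. The matrix $B_{\sigma_1}B_{\sigma_2}B_{\sigma_3}$ is positive. So \Cref{lem:3.5.5} gives that $\btau$ is primitive and has a generalized right eigenvector $\bu\in\mathbb{R}^3_{>0}$.

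\emph{Step 2: the good times.} Consider the indices $n$ at which $\bar B_{[n-3,n+3)}=(B_{\sigma_1}B_{\sigma_2}B_{\sigma_3})^2$ holds. Inside each occurrence of the block $(\sigma_1\sigma_2\sigma_3)^9$, select good times $n_1<n_2<\cdots$ spaced by $3$, so that several consecutive contractive steps occur per block. By \Cref{lem:DHSLem6}, each step $\bar B_{[n_j,n_j+3)}$ contracts $\bone^\bot_{n_j+3}$ by $\frac57$. Between blocks, \Cref{lem:Mbound} bounds the expansion of $\bar B_{[m,n)}$ on $\bone_n^\bot$ by $10$, whenever $m$ is a good time or $m=0$. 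The number $9$ is chosen so that one block yields $k$ contractive steps with $(\frac57)^{k}\cdot 10<1$, say $(\frac57)^{7}\cdot 10\approx 0.95$. Then, between consecutive blocks, the norm on the relevant hyperplane shrinks by a factor $\theta<1$. Positive density implies that the blocks occur with bounded average gaps. Hence $\Vert \bar B_{[0,n)}|_{\bone_n^\bot}\Vert_\infty$ decays geometrically along the good times.

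\emph{Step 3: desubstitution.} Take $w\in\cL_{\btau}$ and write it in the standard way:
\[
w=p_0\,\tau_0(p_1)\,\tau_{[0,2)}(p_2)\cdots \tau_{[0,N)}(x)\,\tau_{[0,N-1)}(s_{N-1})\cdots s_0,
\]
with $|p_k|,|s_k|\le 2$ (images of letters have length at most $2$). Group these levels into stretches between consecutive good times $n_j$. Each stretch contributes a vector $\bar B_{[0,n_j)}\bv_j$, where $\bv_j$ is the abelianization of a word coming from a bounded number of levels. Bounded average gaps are needed here.

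The difficulty is that the gaps may be unbounded, so $\Vert\pi_{\bu_{n_j},\bone}(\bv_j)\Vert_\infty$ is not uniformly bounded. To handle this, I would first use \Cref{lem:sigma_ibalance}: a stretch of length $g$ produces a word that is $O(g)$-letter balanced. Its projection is therefore $O(g)$, and \Cref{lem:threeprojOne} then gives the bound
\[
\Vert\pi_{\bu,\bone}(\bar B_{[0,n_j)}\bv_j)\Vert_\infty\le \tfrac{22}{7}\,O(g_j)\,\Vert \bar B_{[0,n_j)}|_{\bone_{n_j}^\bot}\Vert_\infty .
\]
Positive density gives $g_j=o(j)$, so $\sum_j g_j\theta^j<\infty$. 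Summing the geometric series then yields the uniform constant $C'$.

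The main obstacle is this combination: controlling the gap contributions linearly while the contraction is only geometric in the number of blocks. The factor $10$ of \Cref{lem:Mbound} is paid once per gap, independent of gap length, and positive density of the block makes $g_j$ sublinear. I will verify that these two facts mesh to give a convergent sum.
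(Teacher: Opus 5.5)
Your skeleton matches the paper's proof. You get the generalized eigenvector from Lemma~\ref{lem:3.5.5} and reduce to Lemma~\ref{lem:sadiclem41}~(i). You desubstitute grouped at the block times (the paper's $\ell_k$, your $n_j$) and transfer bounds through Lemma~\ref{lem:threeprojOne}. Lemmas~\ref{lem:DHSLem6} and~\ref{lem:Mbound} give a factor $q=10\cdot(5/7)^7<1$ per block (your $\theta$), and positive density gives $g_k=o(k)$.

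You diverge only in how you bound a stretch word $v\prec\tau_{[\ell_{k-1},\ell_k)}(i)$. The paper uses the trivial estimate $\Vert\pi_{\bu_{\ell_{k-1}},\bone}(\mathbf{l}(v))\Vert_\infty\le 2\Vert\mathbf{l}(v)\Vert_\infty\le 2m^{g_k}$, which is exponential in the gap. It absorbs this because $g_k<\varepsilon k$ eventually, with $m^\varepsilon q<1$. So unbounded gaps are not actually the obstacle you feared.

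Your linear bound $O(g_k)$ is true, but your justification skips a step. Proposition~\ref{lem:sigma_ibalance} makes the finite word $\tau_{[\ell_{k-1},\ell_k)}(i)$ $O(g_k)$-letter balanced. That only controls deviation from this word's own letter-frequency direction, not from the prescribed direction $\bu_{\ell_{k-1}}$. Lemma~\ref{lem:sadiclem41}~(ii) concerns infinite words and their own frequency vector, so "balanced, therefore projection $O(g)$" does not follow as written.

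The repair is the device used in the proof of Lemma~\ref{lem:Mbound}:
\begin{itemize}
\item Since $\ell_k$ is a good time, Lemma~\ref{lem:claim1} gives an infinite balanced word $v'$ with frequency vector $\bu_{\ell_k}$.
\item Because $\bu_{\ell_k}$ is positive, $v'$ contains every letter.
\item Then $\tau_{[\ell_{k-1},\ell_k)}(v')$ is $(C_0+4g_k)$-letter balanced and has frequency vector $\bar B_{[\ell_{k-1},\ell_k)}\bu_{\ell_k}=\bu_{\ell_{k-1}}$.
\item It contains $\tau_{[\ell_{k-1},\ell_k)}(i)$ as a factor, so Lemma~\ref{lem:sadiclem41}~(ii) gives $\Vert\pi_{\bu_{\ell_{k-1}},\bone}(\mathbf{l}(v))\Vert_\infty\le C_0+4g_k$.
\end{itemize}

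With this repair your route works, and it only needs $\sum_k g_k q^k<\infty$. That tolerates much faster gap growth than $g_k=o(k)$. The paper's cruder bound is shorter, but it genuinely relies on the sublinearity of the gaps that positive density supplies.
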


\begin{proof}
Let $\btau=(\tau_n)_{n\in\N}$ be as in the statement and let $(\bar B_n)_{n\in\N}$ be its sequence of incidence matrices. Because the matrix $B=B_{\sigma_1}B_{\sigma_2}B_{\sigma_3}$ is positive,
\Cref{lem:3.5.5} implies that $\btau$ has a generalized right eigenvector $\bu$. Therefore, in view of Lemma~\ref{lem:sadiclem41}~(i), it suffices to show that there exists some $C\in\mathbb{N}$ such that for each $u\in\mathcal{L}_{\btau}$ we have 
$\Vert \pi_{\bu,\bone}(\mathbf{l}(u)) \Vert_\infty \le C$. 

There is a strictly increasing sequence $(\ell_k)_{k\in \N}$ satisfying $\bar B_{{[\ell_k-3, \ell_k+24)}} = B^{9}$,  with $\ell_k\ge 3$ as small as possible $(k\ge 0)$. Put $\ell_{-1}=0$ and set $g_k=\ell_k-\ell_{k-1}$ for $k\ge 0$. 
Let now $u\in\mathcal{L}_{\btau}$ arbitrary. Then for some $n\in\mathbb{N}$ we may write
\begin{equation}\label{eq:urep}
u =  s_0\tau_{[0,\ell_0)}(s_{1})\cdots\tau_{[0,\ell_{n{-}2})}(s_{n-1})\tau_{[0,\ell_{n-1})}(u_{n})\tau_{[0,\ell_{n{-}2})}(p_{n-1}) \cdots  \tau_{[0,\ell_0)}(p_{1}) p_0,
\end{equation}
where $s_k,p_k\prec\tau_{[\ell_{k-1},\ell_k)}(i_k)$ for some $i_k\in \cA$ ($0\le k< n$) and $u_n\prec\tau_{[\ell_{n-1},\ell_n)}(i_n)$ for $i_n\in \cA$. 

Set $m=\max\{\|B_{\sigma_j}\|_\infty\,:\, 1\le j\le 3\}$ and 
consider $\tau_{[0,\ell_{k-1})}(v)$ with $k\in\N$ and $v\prec \tau_{[\ell_{k-1},\ell_k)}(i)$ for some $i\in \cA$. 
Because $\Vert \pi_{\bw,\bone} \Vert_\infty \le 2$ holds for each 
$\bw\in  \mathbb{R}_{\ge 0}^3 \setminus\{\mathbf{0}_3\}$ we get 
\[
\Vert \pi_{\bu_{\ell_{k-1}},\bone}(\mathbf{l}(v))\Vert_\infty  \le 2\Vert\mathbf{l}(v)\Vert_\infty 
\le 2\Vert\mathbf{l}(\tau_{[\ell_{k-1},\ell_k)}(i))\Vert_\infty
= 2\Vert \bar B_{{[\ell_{k-1},\ell_k)}}\mathbf{l}(i)\Vert_\infty
\le 2m^{g_k}. 
\]
From \Cref{lem:threeprojOne} we therefore gain that 
\begin{equation}\label{eq:MMtil}
\Vert \pi_{\bu,\bone}( \bar B_{{[0,\ell_{k-1})}}\mathbf{l}(v)) \Vert_\infty \le \frac{44}{7}m^{g_k} \Vert \bar B_{{[0,\ell_{k-1})}}|_{\bone_{\ell_{k-1}}^\bot} \Vert_\infty< 7m^{g_k} \Vert \bar B_{{[0,\ell_{k-1})}}|_{\bone_{\ell_{k-1}}^\bot} \Vert_\infty.
\end{equation}
By construction, 
\[
\bar B_{{[0,\ell_{k-1})}}=  \bar B_{{[0,\ell_0)}} B^7  \bar B_{{[\ell_0+21,\ell_1)}} B^7 \cdots 
 \bar B_{{[\ell_{k-3}+21,\ell_{k-2})}}  B^7
 \bar B_{{[\ell_{k-2}+21,\ell_{k-1})}}.
\]
By the definition of the sequence $(\ell_k)_{k\in\N}$, we may apply \Cref{lem:DHSLem6} to $\Vert B^7|_{\bone_{\ell_{i-1}+21}^\bot} \Vert_\infty$ as well as \Cref{lem:Mbound} to $\Vert  \bar B_{{[0,\ell_1)}}|_{\bone_{\ell_{1}}^\bot} \Vert_\infty$ and $\Vert  \bar B_{{[\ell_{i-1}+21,\ell_i)}}|_{\bone_{\ell_{i}}^\bot} \Vert_\infty$ ($1\le i\le k-1$). Setting $q=10\cdot\left(\frac{5}{7}\right)^7<1$, we therefore gain from \eqref{eq:MMtil} that
\begin{equation}\label{eq:pivest}
\begin{split}
&\Vert \pi_{\bu,\bone}( \bar B_{{[0,\ell_{k-1})}}\mathbf{l}(v)) \Vert_\infty < 7m^{g_k} \Vert \bar B_{{[0,\ell_{k-1})}}|_{\bone_{\ell_{k-1}}^\bot} \Vert_\infty 
\\
&\qquad\le 7m^{g_k} \Vert   \bar B_{{[0,\ell_0)}}|_{\bone_{\ell_{0}}^\bot} \Vert_\infty \cdot   \prod_{i=1}^{k-1} \Vert \big(B^7|_{\bone_{\ell_{i-1}+21}^\bot} \Vert_\infty  \cdot \Vert  \bar B_{{[\ell_{i-1}+21,\ell_i)}}|_{\bone_{\ell_{i}}^\bot}\Vert_\infty\big) 
\le 70 m^{g_k} q^{k-1}
\end{split}
\end{equation}
(note that this is valid also for $k=0$). Combining \eqref{eq:pivest} with \eqref{eq:urep} we see that
\begin{equation}\label{eq:finEst}
\begin{split}
 \Vert \pi_{\bu,\bone}(\mathbf{l}(u)) \Vert_\infty 
\le& 
\sum_{k=0}^{n-1}
\big(
\Vert \pi_{\bu,\bone}( \bar B_{{[0,\ell_{k-1})}}\mathbf{l}(s_k)) \Vert_\infty+
\Vert \pi_{\bu,\bone}( \bar B_{{[0,\ell_{k-1})}}\mathbf{l}(p_k)) \Vert_\infty
\big)\\
&+\Vert \pi_{\bu,\bone}( \bar B_{{[0,\ell_{n-1})}}\mathbf{l}(u_n)) \Vert_\infty 
\le 140 \sum_{k=0}^{n} m^{g_k} q^{k-1}.
\end{split}
\end{equation}
Because  
    $\lim_{n\rightarrow \infty}\frac{|\tau_1\tau_2\cdots\tau_n|_{(\sigma_1\sigma_2\sigma_3)^{9}}}{n}>0$,
we have 
$\lim_{k\rightarrow\infty}\frac{\ell_k}{k}=
\lim_{n\rightarrow\infty}\frac{g_1+g_2+\cdots+g_k}{k}= g.
$
Therefore, for each $\varepsilon>0$, there exists $k_0$ such that 
$   \frac{g_1+g_2+\cdots+g_k}{k}\in \big(g-\frac\varepsilon2, g+\frac\varepsilon2\big)
$
and, hence, $g_k<\varepsilon k$ for each $k\ge k_0$. This implies that 
\[
\sum_{k\ge 0} m^{g_k} q^{k} <
\sum_{k= 0}^{k_0-1} m^{g_k} q^{k} + \sum_{k\ge k_0} m^{\varepsilon k} q^{k}.
\]
Choosing $\varepsilon$ in a way that $m^\varepsilon q <1$, this series converges. Therefore, \eqref{eq:finEst} implies that there is $C>0$ not depending on $u \in \mathcal{L}_{\btau}$ such that $\Vert \pi_{\bu,\bone}(\mathbf{l}(u)) \Vert_\infty <C $. This finishes the proof.
\end{proof}

The proof of our main result is now a matter of few lines.

\begin{proof}[Proof of Theorem $\ref{th:balance}$]
By \Cref{th:balanceLetter}, the language $\cL_{\btau}^{(k)}$ is letter balanced for infinitely many~$k$. 
To prove factor balancedness, we want to apply \cite[Theorem 4.1]{PSt:24}. This result is valid only for sequences of proper substitutions.
Because $\sigma_4$ is not proper, we need to consider compositions of the form $\sigma_4^k\sigma_i$, which are proper for each $i\in\{1,2,3\}$ and each $k \in \mathbb{N}$. By blocking consecutive substitutions, we see that for each $\btau\in\{\sigma_1, \sigma_2, \sigma_3, \sigma_4\}^{\mathbb{N}}$, there exists $\btau'\in\{\sigma_4^k\sigma_1, \sigma_4^k\sigma_2, \sigma_4^k\sigma_3\,:\, k\in\N\}^{\mathbb{N}}$ such that $\cL_{\btau}=\cL_{\btau'}$.
Because $\cL_{\btau}=\cL_{\btau'}$ and $\btau'$ is a sequence of proper substitutions,~\cite[Theorem 4.1]{PSt:24} yields that $\cL_{\btau'}=\cL_{\btau}$ is even factor balanced. 
\end{proof}

\appendix
\section{The sorted Reverse algorithm and its second Lyapunov exponent}
\label{sec:appA}
We come back to the sorted version of the Reverse algorithm $\mathop{sort}\circ F_{R}$ mentioned in \Cref{rem:sorted}. This algorithm is defined on the domain $\Lambda'=\{[x_0:x_1:x_2]\in\mathbb{P}^2 \,:\, x_0>x_1>x_2>0\}$. By projecting it via $p:\Lambda'\to\Delta'$; $[x_0:x_1:x_2]\mapsto \big(\frac{x_1}{x_0},\frac{x_2}{x_0}\big)$, we obtain the sorted version $(\Delta', f'_R)$, where $\Delta'=\{(x_1,x_2)\in\mathbb{R}^2 \,:\, 1>x_1>x_2>0\}$. Let $\Lambda_{\pi}=\{[x_0:x_1:x_2]\in\mathbb{P}^2 \,:\, x_{\pi_0}>x_{\pi_1}>x_{\pi_2}>0\}$, where $\pi$ is a permutation and let $\Delta'((i,\pi))=p(\Lambda'\cap\Lambda_i\cap F_R^{-1}(\Lambda_{\pi}))$. Set
\begin{align*}
A':\Delta' \to \mathcal{M}(3,\mathbb{N});\quad  \bx \mapsto \,^t\!M_{i,\pi}  \quad \text{if and only if} \quad \bx \in\Delta'((i,\pi)),
\end{align*}
for $(i,\pi)\in \mathcal{A}'=\{(1,id),(1,(213)),(1,(231)), (4,(321))\}$, where 
\begin{align*}\label{eq:matricesM(Sorted)}
M_{1,id}=
\begin{pmatrix}
1 & 1 & 1\\
0 & 1 & 0\\
0 & 0 & 1
\end{pmatrix}, \,
M_{1,(213)}=
\begin{pmatrix}
1 & 1 & 1\\
1 & 0 & 0\\
0 & 0 & 1
\end{pmatrix}, \,
M_{1,(231)}=
\begin{pmatrix}
1 & 1 & 1\\
1 & 0 & 0\\
0 & 1 & 0
\end{pmatrix}, \,
M_{4,(321)}=
\begin{pmatrix}
1 & 1 & 0\\
1 & 0 & 1\\
0& 1 & 1
\end{pmatrix}.
\end{align*}

The map $f'_R$ is defined by the map
\begin{align*}
 f'_{\mathrm{R}}: \Delta'\to\Delta';\qquad \bx \mapsto \frac{\,^t\!A'(\bx)^{-1}\bx}{\|\,^t\!A'(\bx)^{-1}\bx\|_1}
\end{align*}
and let $F'_R=\,^t\!A'(\bx)^{-1}\bx$.

Let
$
\Delta'^{(\#)}=
\{(y_1,y_2)\in\mathbb{R}^2_{+} \;:\; y_1+y_2\ge1,~ |y_1-y_2|\le1\}.
$
We define the dual map $f'^{(\#)}_R$ on $\Delta'^{(\#)}$ by 
$\by\mapsto \frac{A'(\by)\cdot \by}{\|A'(\by)\cdot \by\|_1}$.
By \cite[Theorem~13 in Chapter~3]{Schweiger:00}, we obtain the density function
\begin{align*}
    \int_{\Delta'^{(\#)}}\frac{1}{(1+x_1y_1+x_2y_2)^{3}}dy_1dy_2=\frac{1}{(1+x_1)(1+x_2)(x_1+x_2)}
\end{align*}
of an absolutely continuous invariant measure $\mu'$. In the same way as in \Cref{sec:erg}, we can see that the sorted version is ergodic with respect to $\mu'$. Also, we can see $F'_R$ and $F_R$ have the same Lyapunov exponents. Indeed, we have $F'^{n}_R\circ \mathop{sort}=\mathop{sort}\circ (F_R^{n}\circ \mathop{sort})^n$, this implies $\lambda_i(F'_R)=\lambda_i(F_R\circ \mathop{sort})$ for $i\in\{1,2,3\}$. The map $F_R\circ \mathop{sort}$ is defined on $\Lambda\setminus\Lambda_{(132)}\cup\Lambda_{(231)}$, and then $\lambda_i(F_R\circ \mathop{sort})=\lambda_i(F_R)$ by symmetry. Set
\begin{align*}
D'^{(n)}(\bx)=\begin{pmatrix}
0 & 1 & 0\\
0 & 0 & 1
\end{pmatrix} A'^{(n)}(\bx)
\begin{pmatrix}
-{x_1} & -{x_2}\\
1 & 0\\
0 & 1
\end{pmatrix}.
\end{align*}
Then, this also has the cocycle property, and we can estimate the second Lyapunov exponent of $A'^{(n)}$ in the same way in \Cref{sec:lyap}. We define $\mathbb{L}'_1(n)$ and $\mathbb{L}'_2(n)$ as follows ($\Delta'(w)$, $w\in \cA'^*$, is defined in analogy to the cylinders $\Delta(w)$ for $w\in\cA^*$, see \eqref{eq:DeltaCyl}).
\begin{align*}
\mathbb{L}'_1(n)
=&\frac{24}{\pi^2n}\max_{\bx\in\Delta'((1,~id)^n)}\log{\|D'^{(n)}(\bx)\|_1}\int_{\Delta'((1,~id)^n)}d\mu
=\frac{24}{\pi^2n}\log{\Big(1+\frac{n}{n+1}\Big)}\int_{\Delta'((1,~id)^n)}d\mu'
\end{align*}
and
\begin{align*}
\mathbb{L}'_2(n)=&\frac{24}{\pi^2n}\sum_{\substack{w\in\mathcal{A}'^n\setminus\{(1,id)\}^n,\\ \max\{\log{\|D'^{(n)}(\bx)\|_1}\,:\,\bx\in\Delta'(w) \}>0}}
\!\!\!\!\!\!\!\!\bigg(
\max_{\bx\in \Delta'(w)}\frac{1}{(1+x_1)}\cdot
\max_{\bx\in \Delta'(w)}\frac{1}{(1+x_2)}\cdot
\max_{\bx\in \Delta'(w)}\frac{1}{(x_1+x_2)}\bigg)
\\
&\hskip5cm\cdot 
\mathrm{Leb}(\Delta'(w))\cdot
\max_{\bx\in\Delta'(w)}\log{\|D'^{(n)}(\bx)\|_1}\\
+&\frac{24}{\pi^2n}\sum_{\substack{w\in\mathcal{A}'^n\setminus\{(1,id)\}^n, \\ \max\{\log{\|D'^{(n)}(\bx)\|_1}\,:\,\bx\in\Delta'(w) \}<0}}
\!\!\!\!\!\!\!\!\bigg(
\min_{\bx\in \Delta'(w)}\frac{1}{(1+x_1)}\cdot
\min_{\bx\in \Delta'(w)}\frac{1}{(1+x_2)}\cdot
\min_{\bx\in \Delta'(w)}\frac{1}{(x_1+x_2)}\bigg)
\\
&\hskip5.2cm\cdot 
\mathrm{Leb}(\Delta'(w)) \cdot
\max_{\bx\in\Delta'(w)}\log{\|D'^{(n)}(\bx)\|_1}.
\end{align*}

Then, by computer calculations, the value of $\mathbb{L}'_1(n)+\mathbb{L}'_2(n)$ is negative for the first time when $n=11$; more precisely, we find
$\mathbb{L}'_1(11)+\mathbb{L}'_2(11) < 0.005873-0.008701=-0.002828$.

\section*{Acknowledgements}
We wish to thank Val\'erie Berth\'e, Wolfgang Steiner, and the anonymous referee for valuable comments that helped to improve this paper.

\bibliographystyle{siam}  
\bibliography{lit}
\end{document}